\documentclass{amsart}
\usepackage{amssymb, mathrsfs}
\usepackage[bookmarksnumbered, colorlinks, linkcolor=blue, anchorcolor=blue, citecolor=blue, plainpages]{hyperref}

\textwidth=15.5cm \textheight=21cm
\hoffset -0.5 true in
\voffset -0.1 true in

\newtheorem{theorem}{Theorem}[section]
\newtheorem{corollary}[theorem]{Corollary}
\newtheorem{lemma}[theorem]{Lemma}
\newtheorem{proposition}[theorem]{Proposition}
\theoremstyle{definition}
\newtheorem{definition}[theorem]{Definition}
\theoremstyle{remark}
\newtheorem{remark}[theorem]{Remark}

\numberwithin{equation}{section}

\newcommand{\CN}{\mathbb{C}^n}
\newcommand{\BN}{\mathbb{B}_n}
\newcommand{\SN}{\mathbb{S}_n}
\newcommand{\g}{{\rm g}}

 \DeclareMathOperator*{\re}{Re}
\DeclareMathOperator*{\im}{Im}

\begin{document}
	
	\title[Weighted norm inequalities]
	{Weighted norm inequalities of various square functions and Volterra integral operators on the unit ball}

	\author[C. Pang]{Changbao Pang}
	\address{%
		School of Mathematics and Computer Science\\
		Shanxi Normal University\\
		Taiyuan, 030031\\
		China}
	\email{cbpangmath@sxnu.edu.cn}

	\author[M. Wang]{Maofa Wang}
	\address{%
		School of Mathematics and Statistics\\
		Wuhan University\\
		Wuhan, 430072\\
		China}
	\email{mfwang.math@whu.edu.cn}
	
	\author[B. Xu]{Bang Xu}
	\address{%
		Department of Mathematics\\
		University of Houston\\
		Houston, TX 77204-3008\\
		USA}
	\email{bangxu@whu.edu.cn}
	
	\author[H. Zhang]{Hao Zhang}
	\address{%
		Department of Mathematics\\
		University of Illinois Urbana-Champaign\\
		USA}
	\email{hzhang06@illinois.edu}

	\subjclass[2020]{32A35; 32A37; 32A50; 47B38.}
	
	\keywords{Hardy space; BMOA; Muckenhoupt weight; area integral; Volterra integral operator.}
	
	\date{December 2, 2024}

	
	\thanks{C. Pang was supported by National Natural Science Foundation of China (Grant No. 12301153), Fundamental Research Program of Shanxi Province (Grant No. 202303021222191), and Scientific and Technological Innovation Programs of Hight Education Institutions in Shanxi (Grant No. 2022L265). M. Wang was supported by National Natural Science Foundation of China (Grant No. 12171373). B. Xu was supported by National Natural Science Foundation of China (Grant Nos. 12071355, 12325105).}
	
	
	\begin{abstract}
		In this paper, we investigate various square functions on the complex unit ball. We prove the weighted inequalities of the Lusin area integral associated with  Poisson integral in terms of $A_p$ weights for all $1<p<\infty$; this gives an affirmative answer to an open question raised by Segovia and Wheeden. In addition, we get an equivalent characterization of weighted Hardy spaces by means of the Lusin area integral in the context of holomorphic functions. We also obtain the weighted inequalities for Volterra integral operators. 
	\end{abstract}

	\maketitle

	\section{Introduction}
	This paper is devoted to establishing the weighted norm inequalities of various square functions on the unit ball of several complex variables. Area integral, also called square function, has been an indispensable tool in complex analysis and harmonic analysis with deep relation in many important topics, such as singular integral theory, Volterra integral operators, Hardy spaces and functional analysis \cite{AB,Ca,CMS,X}. The study of area integral could date back to Littlewood and Paley \cite{LP1,LP2}. Later, Marcinkiewicz and Zygmund \cite{MZ} proved the boundedness of Lusin area integral in the unit disc.  Since then, area integral has attracted a lot of attention and is hugely studied in the literature. One of the most interesting aspects is the relationship between Hardy spaces and area integral. In \cite{FS}, Fefferman and Stein developed an equivalent characterization of Hardy spaces on Euclidean spaces in terms of area integral. Via the theory of distribution functions and area integral developed by Burkholder and Gundy \cite{BG}, Geller \cite{Ge} characterized the Hardy spaces in terms of area integral  on the Siegel upper half space of type II. The area integral characterization of Hardy spaces for holomorphic functions or M-harmonic functions in $\BN$ was studied in \cite{AB,ABC,BBG,C}; and we refer to \cite{KL,NSW,P,PW,S2,Zy} and references therein for more information on this subject matter.
	
	Another significant but challenging subject is to study the weighted norm inequalities for area integral. The weighted norm inequalities were first studied by Muckenhoupt \cite{MB}, where he established the weighted norm inequalities for Hardy-Littlewood maximal function. Concerning area integral, Gundy and Wheeden \cite{GW} developed weighted norm inequalities between the Lusin area integral and non-tangential maximal function on  Euclidean spaces.  Besides, Lerner \cite{Ler,Ler3} proved 
	$$\|Tf\|_{L^p_\omega}\leq C[\omega]_{A_p}^{\max\{1/2,1/(p-1)\}}\|f\|_{L^p_\omega}, \ \ \forall \ 1<p<\infty,$$
	where $T$ is one of the following area integrals: the intrinsic square function, the Lusin area integral and the Littlewood-Paley $\g$-function; moreover, the exponent in the $A_p$ weight norm is optimal. This already gives a complete picture of weighted inequalities of several area integrals on Euclidean spaces. With regard to spaces of homogeneous type, Bui and Duong \cite{BD} obtained sharp weighted inequalities of the area integral associated with a semigroup whose kernels satisfy the Gaussian upper bound.
	It is worthwhile to note that the peak of the weighted inequality theory is the complete resolution of the famous $A_2$ conjecture and matrix $A_2$ conjecture, respectively by Hyt\"{o}nen  \cite{HT} and Domelevo, Petermichl, Treil and Volberg \cite{DPTV}.  
	
	To the best of our knowledge, the first weighted inequality of the area integral in the complex setting is due to Segovia and Wheeden \cite{SW}, where they showed that the Lusin area integral on the unit circle is bounded on weighted $L^p$ spaces for any $1<p<\infty$, and they left the weighted norm inequalities of the area integral on the higher dimensional unit balls $\BN$ as an open question. In this paper, we are devoted to the study of the above open question. However, note that there are several different types of area integral on the higher dimensional unit balls. As a result, we will investigate the weighted norm inequalities of several types of area integrals on $\BN$ respectively. 
	
   The weighted inequalities of the area integral associated with the Bergman gradient were first established by Petermichl and Wick in \cite{PW}. This is the first attempt to handle weighted inequalities of area integral on the higher dimensional unit balls. To better illustrate Petermichl and Wick's remarkable results \cite{PW}, we first present notation on $A_p$ weights and weighted $L^p$ spaces. Denote by $\SN$ the unit sphere $\partial\BN$. Let $dv$ and $d\sigma$ be the normalized volume measure on $\BN$, and the normalized surface measure on $\SN$ respectively. Recall that a positive integrable function $\omega$ on $\SN$ is called an $ A_p$ weight with $1<p<\infty$ if and only if
	$$[\omega]_{A_p}:=\sup_{B}\left(\frac{1}{\sigma(B)}\int_B\omega d\sigma\right)
	\left(\frac{1}{\sigma(B)}\int_B\omega^{-1/(p-1)}d\sigma\right)^{p-1}<\infty,$$
	where the supremum is taken over all nonisotropic metric balls $B\subset\SN$ (see Subsection \ref{NMB} for the definition of nonisotropic metric balls). By a slight abuse of notation, let $A_p$ be the set of all $A_p$ weights. If $\omega$ is a positive measurable function on $\SN$, denote by $L^p_\omega$ the space of all complex-valued measurable functions on $\SN$ that are $p$-th power integrable with respect to the measure $\omega d\sigma$. 
	
	 Then we need to introduce the area integral associated with the Bergman gradient. Given a positive integer $k$, the space $\mathcal{C}^k(\BN)$ is defined to be the set of all complex-valued functions on $\BN$ with continuous derivatives of order $\gamma$ for all multi-indices $\gamma\in \mathbb{N}^n$ with $|\gamma|\leq k$. Let $\widetilde{\bigtriangledown}$ be the Bergman gradient (see the definition in Section \ref{bergra}). The Kor\'anyi (admissible, non-tangential) approach region for $\zeta\in\SN$ and $\alpha>1/2$ is defined by
	$$D_\alpha(\zeta)=\{z\in\BN:|1-\langle z,\zeta\rangle|<\alpha(1-|z|^2)\}.$$
	The Lusin area integral associated with the Bergman gradient is defined as $$S_\alpha^{\widetilde{\bigtriangledown}}(u)(\zeta)=\left(\int_{D_\alpha(\zeta)}|\widetilde{\bigtriangledown} u(z)|^2\frac{dv(z)}{(1-|z|^2)^{n+1}}\right)^{1/2}, \quad \forall  \zeta\in\SN, $$
	where $u\in\mathcal{C}^1(\BN)$ and $\alpha>1/2$.
	
	In addition, we also need to present Poisson integral. For any $f\in L^1$, the Poisson integral, denoted by $P[f]$, is given by
	$$P[f](z)=\int_{\SN}P(z,\zeta)f(\zeta)d\sigma(\zeta), \ \ \forall  z\in\BN,$$
	where $P(z,\zeta)$ is the invariant Poisson-Szeg\"o kernel defined on $\BN\times\SN$:
	$$P(z,\zeta)=\frac{(1-|z|^2)^n}{|1-\langle z,\zeta\rangle|^{2n}}.$$
	Note that the Poisson integral uniquely solves the Dirichlet problem for the Laplace-Beltrami operator determined by the Bergman metric in $\BN$ \cite{Sto}.

	Petermichl and Wick in \cite{PW} proved the following theorem:
	
	\
	
\noindent \textbf{Theorem A.} Let $\alpha>1/2$ and $\omega\in A_2$. For any $f\in L_\omega^2$, 
	\begin{align}\label{PWres}
		\|S_\alpha^{\widetilde{\bigtriangledown}}(P[f])\|_{L^2_\omega}\leq C\widetilde{Q}_2(\omega)\|f\|_{L^2_\omega},
	\end{align}
	where $C>0$ depends only on $\alpha$ and $n$, and
	$$\widetilde{Q}_2(\omega)=\sup_{z\in\BN}P[\omega](z)P[\omega^{-1}](z)<\infty.$$

	In \cite{PW}, the authors pointed out that the linear dependence of $\widetilde{Q}_2(\omega)$ is optimal, and they also proved that there exists a constant $C>0$ such that 
	$$(1/C) [\omega]_{A_2}\leq \widetilde{Q}_2(\omega)\leq C[\omega]_{A_2}^2.$$
	From \eqref{PWres} and the celebrated Rubio extrapolation theorem, one easily deduces that if $1<p<\infty$, $\omega\in A_p$ and $\alpha>1/2$, then for any $f\in L_\omega^p$,  
	\begin{align}\label{bdS}
		\|S_{\alpha}^{\widetilde{\bigtriangledown}}(P[f])\|_{L^p_\omega}\leq C[\omega]_{A_p}^{2\max\{1,1/(p-1)\}}\|f\|_{L^p_\omega},
	\end{align}
	where the constant $C>0$ depends only on $\alpha$, $n$ and $p$. We would like to stress that Theorem A is the first result on weighted inequalities of area integral on the higher dimensional unit balls.
	
	Despite the area integral associated with the Bergman gradient, another important type of area integral, which is related to Hardy spaces, is defined in terms of radial differential operators. Now we introduce such type of area integral. The radial differential operators, denoted by $R$ and $\overline{R}$, are defined as follows:
	$$Ru(z)=\sum_{i=1}^{n}z_i\frac{\partial u}{\partial z_i}(z),  \quad 
	\overline{R}u(z)=\sum_{i=1}^{n}\overline{z}_i\frac{\partial u}{\partial \overline{z}_i}(z), \quad \forall  u\in\mathcal{C}^1(\BN),$$
	where $z=(z_1,\cdots,z_n)\in\BN$. Let $X\in\{R,\overline{R}\}$. For $u\in\mathcal{C}^1(\BN)$, the Lusin area integral in terms of radial differential operators is defined by
	$$ S_{\alpha}^X(u)(\zeta)=\left(\int_{D_\alpha(\zeta)}
	\left|Xu(z)\right|^2\frac{dv(z)}{(1-|z|^2)^{n-1}}\right)^{1/2}, \quad \forall  \zeta\in\SN. $$
    One has the following pointwise estimate: for any $f\in L^1$
    	\begin{align}\label{hzhang1}
     S_{\alpha}^X(P[f])\leq CS_{\alpha}^{\widetilde{\bigtriangledown}}(P[f]).
     \end{align}
    See Section \ref{bergra} for more details. 
	
	Motivated by Segovia and Wheeden's remarkable paper \cite{SW} and  also by Petermichl and Wick's remarkable paper \cite{PW}, we aim to study the weighted inequalities of the area integral $S_{\alpha}^X$. The following theorem is our first main result: 
	
	\begin{theorem}\label{Main1}
		Let $1<p<\infty$ and $\omega\in A_p$.
		\begin{description}
			\item[(i)] Let $\alpha>1/2$. For any $f\in L^p_\omega$,
			$$\|S_{\alpha}^X(P[f])\|_{L^p_\omega}\leq C[\omega]_{A_p}^{\max\{1/2,1/(p-1)\}}\|f\|_{L^p_\omega},$$
			where the constant $C>0$ depends only on $\alpha$, $n$ and $p$. Moreover, the above exponent $\max\{1/2,1/(p-1)\}$ is optimal.
			\item[(ii)] Let $\alpha>1$. For any $f\in L^p_\omega$ with $P[f](0)=0$,
			$$\|f\|_{L^p_\omega}\leq C[\omega]_{A_p}^{\frac{1}{p-1}\max\{1/2,p-1\}}\left(\sum_{X\in\{R,\overline{R}\}}\|S_\alpha^{X}(P[f])\|_{L^p_\omega}\right),$$
			where the constant $C>0$ depends only on $\alpha$, $n$ and $p$.
		\end{description}
	\end{theorem}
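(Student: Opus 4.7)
The plan is to establish Part (i) as a weighted estimate for a Hilbert-space valued singular integral on the space of homogeneous type $(\mathbb{S}_n, d, \sigma)$ with $d(\zeta,\eta)=|1-\langle\zeta,\eta\rangle|^{1/2}$, and to deduce Part (ii) from (i) via a polarized Littlewood-Paley identity together with duality.

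For Part (i), I would first recast $f\mapsto S_\alpha^X(P[f])$ as the pointwise $\mathcal{H}$-norm of a linear operator $\mathcal{T}:L^p_\omega\to L^p_\omega(\mathcal{H})$ with Hilbert space $\mathcal{H}=L^2(\mathbb{B}_n,(1-|z|^2)^{1-n}dv)$ and operator-valued kernel $K(\zeta,\eta)(z)=\chi_{D_\alpha(\zeta)}(z)\,X_zP(z,\eta)$. A direct computation using the pointwise bound $|X_z(1-\langle z,\eta\rangle)^{-n}|\lesssim|1-\langle z,\eta\rangle|^{-n-1}$ and integration over the Koranyi region yields Calder\'on-Zygmund size and H\"ormander smoothness estimates for $\|K(\zeta,\eta)\|_{\mathcal{H}}$ relative to the nonisotropic metric $d$. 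The classical unweighted bound $\|S_\alpha^X(P[f])\|_2\lesssim\|f\|_2$ (which follows from the identity $\|S_\alpha^X(P[f])\|_2^2\asymp\int_{\mathbb{B}_n}|XP[f]|^2(1-|z|^2)\,dv$ combined with standard Littlewood-Paley theory on $\mathbb{B}_n$) supplies the base case. These ingredients feed into the local mean oscillation / sparse-domination framework of Lerner, transferred to $(\mathbb{S}_n, d, \sigma)$ via Christ's dyadic cubes, producing a pointwise quadratic sparse bound that yields the sharp exponent $\max\{1/2,1/(p-1)\}$. Optimality is obtained by adapting the one-dimensional counterexample of \cite{Ler,Ler3}: restricting $f$ to a great-circle slice of $\mathbb{S}_n$ essentially reduces $S_\alpha^X(P[f])$ to the classical Lusin area integral on the disc, for which the exponent is known to be sharp.

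For Part (ii), I invoke the polarized Littlewood-Paley identity for $M$-harmonic functions on $\mathbb{B}_n$: for $f,g$ with $P[f](0)=P[g](0)=0$, integration by parts against the invariant Laplacian gives
\begin{equation*}
\int_{\mathbb{S}_n}f\bar g\, d\sigma = c_n\sum_{X\in\{R,\overline R\}}\int_{\mathbb{B}_n}XP[f](z)\,\overline{XP[g](z)}\,(1-|z|^2)\,dv(z).
\end{equation*}
Using $\sigma(\{\zeta:z\in D_\alpha(\zeta)\})\asymp(1-|z|^2)^n$, Fubini and Cauchy-Schwarz convert this into
\begin{equation*}
\Bigl|\int_{\mathbb{S}_n}f\bar g\, d\sigma\Bigr|\lesssim \sum_{X\in\{R,\overline R\}}\int_{\mathbb{S}_n}S_\alpha^X(P[f])(\zeta)\,S_\alpha^X(P[g])(\zeta)\,d\sigma(\zeta).
\end{equation*}
By the duality $\|f\|_{L^p_\omega}=\sup\{|\langle f,g\rangle|:\|g\|_{L^{p'}_{\omega^{1-p'}}}\leq 1\}$, H\"older's inequality and Part (i) applied to $g$ with the dual weight $\omega^{1-p'}\in A_{p'}$ (whose $A_{p'}$-characteristic equals $[\omega]_{A_p}^{1/(p-1)}$) yield the claimed exponent, since $\max\{1/2,1/(p'-1)\}=\max\{1/2,p-1\}$.

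The main obstacle I anticipate is the sparse-domination step: although sparse machinery is well developed for scalar Calder\'on-Zygmund operators on spaces of homogeneous type, the quadratic vector-valued nature of $S_\alpha^X(P[\cdot])$ together with the interplay between Christ's dyadic cubes, the nonisotropic metric $d$, and the Koranyi geometry of the approach regions $D_\alpha(\zeta)$ will require careful verification at the level of local mean oscillations. Secondary difficulties are the explicit H\"ormander-type kernel regularity computation in $d$ and the transfer of the Euclidean sharp-exponent example to $\mathbb{S}_n$.
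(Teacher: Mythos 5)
Your roadmap matches the paper's proof in both parts: sparse domination in the space of homogeneous type $(\SN,d,\sigma)$ for (i), and a polarized Littlewood--Paley identity obtained from the invariant Green's formula on $\BN$, followed by duality, for (ii). There are, however, two concrete gaps.

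First, in Part (i) the paper does \emph{not} apply local mean oscillation directly to $S_\alpha^X$. The cone cutoff $\chi_{D_\alpha(\zeta)}(z)$ in your operator-valued kernel $K(\zeta,\eta)(z)$ is discontinuous in $\zeta$, so the oscillation of $(S_\alpha^X(P[f]))^2$ over a dyadic cube $Q$ picks up the contribution of the thin shell $D_\alpha(\zeta)\triangle D_\alpha(\zeta_Q)$, on which there is no small factor to spare; the needed oscillation bound fails as written. The paper circumvents this entirely by first dominating $S_\alpha^X$ pointwise by the Littlewood--Paley--type square function $\g^*_{X,\lambda}$ via \eqref{Area1} (with $\lambda\geq 4$), and then running the sparse argument for $\g^*_{X,\lambda}$ instead (Lemma \ref{Rd1}). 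The polynomially decaying weight $\bigl((1-|z|^2)/|1-\langle z,\zeta\rangle|\bigr)^{\lambda n}$ is smooth in $\zeta$ and is exactly what makes the key oscillation estimate (Lemma \ref{P3}) go through. You flag this step as a potential obstacle but do not supply the fix; without it the argument is blocked.

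Second, the polarized identity you display in Part (ii) is not what the invariant Green's formula yields for $n\geq 2$. One has $\widetilde{\bigtriangleup}(P[f]\,P[h]) = 2\langle\widetilde{\bigtriangledown} P[f],\widetilde{\bigtriangledown} P[\overline h]\rangle_B$, and the Bergman gradient $\widetilde{\bigtriangledown}$ carries complex-tangential components in addition to $R$ and $\overline R$ (see \eqref{gra}); so Green's formula controls $\int_{\SN}f\overline g\,d\sigma$ by $\int_{\SN}S_\alpha^{\widetilde{\bigtriangledown}}(P[f])\,S_\alpha^{\widetilde{\bigtriangledown}}(P[\overline g])\,d\sigma$, not by the radial-only right-hand side you wrote. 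The paper bridges the gap with the nontrivial pointwise bound \eqref{co1}, which for Poisson integrals dominates the Bergman-gradient area integral at aperture $\beta$ by $S_\alpha^R+S_\alpha^{\overline R}$ at a larger aperture $\alpha$ (this rests on a lemma for M-harmonic functions from the literature), and that is precisely what feeds into Corollary \ref{Mc2} and the duality step. As stated, your displayed identity with only radial derivatives is false in higher dimensions, and the reduction from $\widetilde{\bigtriangledown}$ to $R,\overline R$ is a substantive missing lemma in your write-up.
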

    
    Compared with Petermichl and Wick's remarkable result \cite{PW} (i.e. Theorem A), our Theorem \ref{Main1} (i) deals with the area integral associated with the radial differential operators, while they considered the Bergman gradient. Besides, we also use different definitions of weights. It seems that the weight $\widetilde{Q}_2(\omega)$ is not usually used, and maybe larger than ours. Indeed, at the time of this writing, we do not know whether the following inequality holds:
    $$ (1/C) [\omega]_{A_2}\leq \widetilde{Q}_2(\omega)\leq C[\omega]_{A_2}.$$
    Note that combine Theorem A, \eqref{bdS} and \eqref{hzhang1}, and we can easily establish some weighted inequalities for $S_{\alpha}^X$. However, we would like to emphasize that while the exponent in \eqref{bdS} is not optimal, our Theorem \ref{Main1} provides the optimal orders in terms of $[\omega]_{A_p}$. 
    
    Besides, Theorem \ref{Main1} can also be seen as the higher dimensional variant of weighted inequalities of area integral in \cite[Theorem 1]{SW}, which gives an affirmative answer to Segovia and Wheeden's question. As another important application of Theorem \ref{Main1}, we give an alternative proof of Theorem A. We would also like to remark that with the help of Theorem \ref{Main1}, we can show the optimal weighted inequalities for $S_{\alpha}^{\widetilde{\bigtriangledown}}$ in terms of $[\omega]_{A_p}$.
    
    \begin{corollary}\label{Mcor1}
    	Let $1<p<\infty$ and $\omega\in A_p$.
    	\begin{description}
    		\item[(i)] Let $\alpha>1/2$. For any $f\in L^p_\omega$,
    		$$\|S_{\alpha}^{\widetilde{\bigtriangledown}}(P[f])\|_{L^p_\omega}\leq C[\omega]_{A_p}^{\max\{1/2,1/(p-1)\}}\|f\|_{L^p_\omega},$$
    		where the constant $C>0$ depends only on $\alpha$, $n$ and $p$. Moreover, the above exponent $\max\{1/2,1/(p-1)\}$ is optimal.
    		\item[(ii)] Let $\alpha>1$. For any $f\in L^p_\omega$ with $P[f](0)=0$,
    		$$\|f\|_{L^p_\omega}\leq C[\omega]_{A_p}^{\frac{1}{p-1}\max\{1/2,p-1\}}\|S_\alpha^{\widetilde{\bigtriangledown}}(P[f])\|_{L^p_\omega},$$
    		where the constant $C>0$ depends only on $\alpha$, $n$ and $p$.
    	\end{description}
    \end{corollary}

    On the other hand, we also establish the converse weighted inequalities, i.e. Theorem \ref{Main1} (ii) and Corollary \ref{Mcor1} (ii), while \cite[Theorem 1]{SW} and \cite{PW} do not involve these converse ones. It seems that there are some essential difficulties to adapt the arguments in \cite{SW} to our setting, since their proof for the unit disc depends heavily on the analyticity of holomorphic functions. This prompted us to seek for some new methods. Our approach to Theorem \ref{Main1} is based on the local mean oscillation technique and sparse domination arising from harmonic analysis, which is quite different from that of Segovia and Wheeden \cite{SW} or the Bellman function method used in \cite{PW}. To that end, we consider the following Littlewood-Paley type square function
	$$\g^*_{X,\lambda}(u)(\zeta)=\left(\int_{\BN}|Xu(z)|^2\left(\frac{1-|z|^2}{|1-\langle z,\zeta\rangle|}\right)^{\lambda n}\frac{dv(z)}{(1-|z|^2)^{n-1}}\right)^{1/2},\quad \forall  \zeta\in\SN, $$
	where $u\in\mathcal{C}^1(\BN)$ and $\lambda\in\mathbb{N}$ with $\lambda\geq4$. By the definition of the Kor\'anyi approach region, we have the following pointwise estimate
	\begin{align}\label{Area1}
		S_{\alpha}^X(u)(\zeta)\leq \alpha^{\lambda n/2}\g^*_{X, \lambda}(u)(\zeta), \quad \forall  \zeta\in\SN.
	\end{align}
	Therefore together with (\ref{Area1}), Theorem \ref{Main1} (i) follows directly from the following lemma. In addition, Theorem \ref{Main1} (ii) follows from Corollary \ref{Mcor1} by duality. As for the optimality of the exponent, we follow a similar argument in \cite{Ler4} to given an example to show it.
	
	\begin{lemma}\label{Rd1}
		Let $1<p<\infty$ and $\omega\in A_p$. Then  for any $f\in L^p_\omega$,
		$$\|\g^*_{X,\lambda}(P[f])\|_{L^p_\omega}\leq C[\omega]_{A_p}^{\max\{1/2,1/(p-1)\}}\|f\|_{L^p_\omega},$$
		where the constant $C>0$ depends only on $\lambda$, $n$ and $p$.
	\end{lemma}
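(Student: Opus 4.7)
The plan is to realize $\g^{*}_{X,\lambda}(P[\cdot])$ as a Hilbert-space-valued Calder\'on--Zygmund operator on the space of homogeneous type $(\SN,d,\sigma)$ with $d(\zeta,\zeta')=|1-\langle\zeta,\zeta'\rangle|^{1/2}$, and then to invoke pointwise sparse domination in the spirit of Lerner's local mean oscillation technique. Writing
$$\g^{*}_{X,\lambda}(P[f])(\zeta)=\left(\int_{\BN}|X_zP[f](z)|^{2}\,d\mu_{\zeta}(z)\right)^{1/2},\quad d\mu_{\zeta}(z)=\left(\frac{1-|z|^{2}}{|1-\langle z,\zeta\rangle|}\right)^{\lambda n}\frac{dv(z)}{(1-|z|^{2})^{n-1}},$$
identifies the operator as $f\mapsto\mathcal{T}f$ taking values in the Hilbert space $\mathcal{H}_{\zeta}=L^{2}(\BN,\mu_{\zeta})$; once in this form, the standard Hilbert-space-valued machinery for singular integrals applies.

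First I would verify the unweighted $L^{2}(\SN)$ bound. Swapping the order of integration, the identity $\int_{\SN}\bigl((1-|z|^{2})/|1-\langle z,\zeta\rangle|\bigr)^{\lambda n}\,d\sigma(\zeta)\asymp(1-|z|^{2})^{n}$ (valid for $\lambda>1$, which is assumed since $\lambda\ge 4$) reduces the task to the classical Littlewood--Paley identity $\int_{\BN}|XP[f](z)|^{2}(1-|z|^{2})\,dv(z)\lesssim\|f\|_{L^{2}}^{2}$.

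Next, I would establish the kernel estimates that drive the sparse domination. The representing kernel is $K:\SN\times\SN\to\mathcal{H}_{\zeta}$ determined by $X_zP(z,\eta)$. The pointwise bound $|X_zP(z,\eta)|\lesssim(1-|z|^{2})^{n-1}/|1-\langle z,\eta\rangle|^{2n}$, together with the standard asymptotics $\int_{\SN}|1-\langle z,\zeta\rangle|^{-s}\,d\sigma(\zeta)\asymp(1-|z|^{2})^{n-s}$ for $s>n$, yields the size bound $\|K(\zeta,\eta)\|_{\mathcal{H}_{\zeta}}\lesssim\sigma(B(\zeta,d(\zeta,\eta)))^{-1}$. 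For the H\"ormander-type regularity, the comparison $|1-\langle z,\zeta\rangle|\asymp|1-\langle z,\tilde\zeta\rangle|$ whenever $d(\zeta,\tilde\zeta)^{2}\lesssim|1-\langle z,\zeta\rangle|$, combined with a dyadic shell decomposition of $\BN$ according to $2^{k}(1-|z|^{2})\asymp d(\zeta,\eta)^{2}$, supplies a H\"older gain of the form $(d(\zeta,\tilde\zeta)/d(\zeta,\eta))^{\delta}$ for some $\delta>0$. With these estimates in hand, the vector-valued form of the Lerner--Hyt\"onen sparse bound on $(\SN,d,\sigma)$ yields
$$\g^{*}_{X,\lambda}(P[f])(\zeta)\le C\left(\sum_{Q\in\mathcal{S}}\langle|f|\rangle_{Q}^{2}\mathbf{1}_{Q}(\zeta)\right)^{1/2}$$
for some sparse family $\mathcal{S}$ of nonisotropic balls, and the well-known sharp weighted inequality $\|(\sum_{Q}\langle|f|\rangle_{Q}^{2}\mathbf{1}_{Q})^{1/2}\|_{L^{p}_{\omega}}\lesssim[\omega]_{A_p}^{\max\{1/2,1/(p-1)\}}\|f\|_{L^{p}_{\omega}}$ for sparse square functions closes the proof.

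I expect the main obstacle to be the verification of the vector-valued H\"ormander condition: because $R$ and $\overline{R}$ are adapted to the radial rather than to the tangential directions on $\SN$, the cancellation gained by perturbing $\zeta$ must come entirely from the geometric comparison of $|1-\langle z,\zeta\rangle|$ with $|1-\langle z,\tilde\zeta\rangle|$, uniformly in $z$, and then be packaged against the $\mathcal{H}_{\zeta}$-norm. This is the step that most critically exploits the hypothesis $\lambda\ge 4$, which supplies enough decay in the weight $((1-|z|^{2})/|1-\langle z,\zeta\rangle|)^{\lambda n}$ to absorb both the size loss from the Poisson derivative and the summation over shells that delivers the H\"older gain; once that bookkeeping is carried out, everything else reduces to routine application of known sparse-domination technology.
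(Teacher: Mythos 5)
Your plan and the paper's share the endgame --- an $\ell^2$-sparse domination closed by Lemma \ref{CUMP1} --- but the route you sketch glosses over the two places where the argument actually requires work, and both are genuine gaps.

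First, the Hilbert space $\mathcal{H}_\zeta=L^2(\BN,\mu_\zeta)$ varies with $\zeta$, so there is no off-the-shelf vector-valued Lerner--Hyt\"onen theorem to quote; and even after absorbing the weight into a fixed-target kernel $K(\zeta,\cdot)\in L^2(\BN,dv)$, the sparse-domination theorem for Hilbert-valued Calder\'on--Zygmund operators yields only the \emph{first-order} bound $\|\mathcal{T}f(\zeta)\|\lesssim\sum_{Q}\langle|f|\rangle_Q\chi_Q(\zeta)$, whose weighted operator norm scales like $[\omega]_{A_p}^{\max\{1,1/(p-1)\}}$, not the claimed $\max\{1/2,1/(p-1)\}$. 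The gain for $p>2$ comes precisely from the $\ell^2$-sparse form, and the paper extracts it by running Lerner's local mean oscillation formula (Theorem \ref{LMO}) on the \emph{square} $(\g^*_\lambda(P[f]))^2$, not on a vector-valued operator.

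Second, and more critically, even the local-oscillation step on the square does not produce $(\sum_Q\langle|f|\rangle_Q^2\chi_Q)^{1/2}$ directly. Because $\g^*_\lambda$ integrates over all of $\BN$ rather than over a cone, once $f$ is localized to a cube $Q$ the tail of the integral contributes averages over the dilates $2^jQ$ for every $j\geq1$, damped only by a geometric weight $2^{-j/4}$; this is exactly what Lemma \ref{P2} and Lemma \ref{P3} quantify. One therefore arrives at a domination by the dilated square functions $T^S_{2,j}$ rather than by a single sparse family, and converting this into a sharp weighted estimate requires the nontrivial Lemma \ref{Ler31} (the extension to $\SN$ of Lerner's bound $\|T_{2,m}^Sf\|_{\mathbb{X}^{(2)}}\lesssim m^{1/2}\sup_{S'}\|T_{2,0}^{S'}f\|_{\mathbb{X}^{(2)}}$) together with Rubio de Francia extrapolation. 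Your proposal never addresses this, and if the H\"ormander-condition shell-sum you describe were actually carried out, the shells would deliver precisely these dilated averages --- the phenomenon that Lemma \ref{Ler31} is designed to control.
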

	
	Our second result presents an equivalent characterization of weighted Hardy spaces in terms of the Lusin area integral. We come to the introduction of weighted Hardy spaces. Let $H(\BN)$ be the space of all holomorphic functions on $\BN$. For $1<p<\infty$, the weighted Hardy space $H_\omega^p$ is the set consisting of  all $f\in H(\BN)$ such that $$\|f\|_{H_\omega^p}^p:=\sup_{0<r<1}\int_{\SN}|f(r\zeta)|^p\omega(\zeta)d\sigma(\zeta)<\infty.$$
	In particular, when $\omega\equiv1$, we write $H^p$ for simplicity as the usual Hardy space. The following well-known theorem \cite{AB,FS,P} characterizes Hardy spaces in terms of the Lusin area integral $S_\alpha^R$.
	
	\
	
	\noindent \textbf{Theorem B.} Let $0<p<\infty$ and $f\in H^p$. Let $\alpha>1$. Then $f\in H^p$ if and only if $S_{\alpha}^R(f)\in L^p$. Moreover, if $f(0)=0$, then
	$$(1/C)\|f\|_{H^p}\leq\|S_{\alpha}^R(f)\|_{L^p}\leq C\|f\|_{H^p},$$
	where the constant $C>0$ depends only on $\alpha$, $n$ and $p$. 
	
	\
	
	The following theorem is our second main result, which can be seen as an analogous variant of Theorem B in the weighted setting.
	
	\begin{theorem}\label{Main2}
		Let $1<p<\infty$ and $\omega\in A_p$.
		\begin{description}
			\item[(i)] Suppose $\alpha>1/2$. Then for any $f\in H_\omega^p$, 
			$$\|S_{\alpha}^R(f)\|_{L^p_\omega}\leq C[\omega]_{A_p}^{\max\{1/2,1/(p-1)\}}\|f\|_{H^p_\omega},$$
			where the constant $C>0$ depends only on $\alpha$, $n$ and $p$.
			\item[(ii)] For $f\in H(\BN)$, if $S_{\alpha}^R(f)\in L^p_\omega$ for some $\alpha>1$, then $f\in H_\omega^p$. In particular, if $f(0)=0$, then
			$$\|f\|_{H^p_\omega}\leq C[\omega]_{A_p}^{\frac{1}{(p-1)^2}\max\{1/2,p-1\}}\|S_{\alpha}^{R}(f)\|_{L^p_\omega},$$
			where the constant $C>0$ depends only on $\alpha$, $n$ and $p$.
		\end{description}
	\end{theorem}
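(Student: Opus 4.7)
I recognize this as an immediate consequence of Theorem~\ref{Main1}(i). For $f\in H^p_\omega$, standard weighted Hardy space theory (using $\omega\in A_p$) provides nontangential boundary values $f^*\in L^p_\omega$ with $\|f^*\|_{L^p_\omega}=\|f\|_{H^p_\omega}$ and $f=P[f^*]$ on $\BN$. Applying Theorem~\ref{Main1}(i) to $f^*$ then gives
$$\|S_\alpha^R(f)\|_{L^p_\omega}=\|S_\alpha^R(P[f^*])\|_{L^p_\omega}\leq C[\omega]_{A_p}^{\max\{1/2,1/(p-1)\}}\|f\|_{H^p_\omega}.$$

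\textbf{Part (ii).} The plan is to first secure the qualitative conclusion $f\in H^p_\omega$, then apply Theorem~\ref{Main1}(ii) to the boundary value $f^*$ and exploit that $\overline{R}f\equiv 0$ because $f$ is holomorphic. For the qualitative step, since $\omega\in A_p\subset A_\infty$, the measures $\omega d\sigma$ and $d\sigma$ are mutually absolutely continuous, so the hypothesis forces $S_\alpha^R(f)<\infty$ $\sigma$-a.e., and a classical area theorem for holomorphic functions on $\BN$ (adapted from Calder\'on--Stein; see e.g.\ \cite{AB,ABC}) then yields nontangential boundary values $f^*$ $\sigma$-a.e. To upgrade this to $L^p_\omega$-membership I would invoke a weighted good-$\lambda$ inequality of Burkholder--Gundy type, valid for $A_\infty$ weights, comparing the nontangential maximal function $N_\alpha f$ with $S_\beta^R(f)$ at a slightly larger aperture $\beta>\alpha$, obtaining $\|N_\alpha f\|_{L^p_\omega}\leq C\|S_\beta^R(f)\|_{L^p_\omega}$. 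Since $r\zeta\in D_\alpha(\zeta)$ for $r$ close to $1$, this gives $\|f\|_{H^p_\omega}\leq\|N_\alpha f\|_{L^p_\omega}<\infty$, hence $f\in H^p_\omega$ and $f=P[f^*]$.

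With $f=P[f^*]$ in hand and $f(0)=0$, applying Theorem~\ref{Main1}(ii) to $f^*$ yields
$$\|f^*\|_{L^p_\omega}\leq C[\omega]_{A_p}^{\frac{1}{p-1}\max\{1/2,p-1\}}\sum_{X\in\{R,\overline{R}\}}\|S_\alpha^X(P[f^*])\|_{L^p_\omega}.$$
The key observation is that $\overline{R}f\equiv 0$ for holomorphic $f$, so the $X=\overline{R}$ term vanishes and only the $X=R$ contribution $\|S_\alpha^R(f)\|_{L^p_\omega}$ survives; together with $\|f\|_{H^p_\omega}=\|f^*\|_{L^p_\omega}$ this produces the desired bound. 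The additional $[\omega]_{A_p}^{1/(p-1)}$ factor needed to reach the exponent $\frac{1}{(p-1)^2}\max\{1/2,p-1\}$ stated in the theorem will incorporate the loss coming from the aperture change and/or the weighted good-$\lambda$ inequality used in the qualitative step.

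\textbf{The hardest part} will be the qualitative step: quantitatively promoting the almost-everywhere finiteness $S_\alpha^R(f)<\infty$ into an $H^p_\omega$-bound with explicit $[\omega]_{A_p}$ dependence. A weighted good-$\lambda$ inequality on the nonisotropic Kor\'anyi admissible regions of $\BN$ is geometrically subtler than its Euclidean counterpart, because the pseudo-metric on $\SN$ behaves differently near the boundary. Once that is in place, the quantitative stage follows cleanly from Theorem~\ref{Main1}(ii) together with the vanishing $\overline{R}f\equiv 0$.
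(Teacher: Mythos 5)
Your Part~(i) is correct and follows the paper's argument essentially verbatim: pass to the $K$-limit $f_*$ via the analogue of Theorem~\ref{Hp}, represent $f=P[f_*]$, and then apply Theorem~\ref{Main1}(i).

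For Part~(ii) your plan diverges substantially from the paper's, and as it stands it has two genuine gaps. The critical one is the step you yourself flag as "the hardest part": you invoke a weighted good-$\lambda$ inequality comparing $N_\alpha f$ with $S_\beta^R(f)$ on the Kor\'anyi regions of $\BN$ for $A_\infty$ weights, but you neither prove it nor point to a reference where it is established in this nonisotropic complex-ball setting. The paper explicitly states, in the paragraph immediately following the statement of Theorem~\ref{Main2}, that it \emph{avoids} the distribution-function/good-$\lambda$ machinery of Burkholder--Gundy and Gundy--Wheeden precisely because it has substitute tools available. What the authors actually do is a soft duality argument built on the invariant Green's formula: Lemma~\ref{Grlem} gives
$\left|\int_{\SN}P[f](r\zeta)P[h](r\zeta)\,d\sigma\right|\lesssim\int_{\SN}S_\beta^{\widetilde{\bigtriangledown}}(P[f])\,S_\beta^{\widetilde{\bigtriangledown}}(P[\overline{h}])\,d\sigma$; the second factor is controlled by Corollary~\ref{Mc2} applied in $L^{p'}_{\omega'}$; the first is controlled by \eqref{co1}, where the crucial fact you also noticed — $\overline{R}f\equiv0$ for holomorphic $f$ — eliminates the $\overline{R}$ contribution; and then one sends $r\to1^-$ and uses density of continuous functions in $L^{p'}_{\omega'}$ to conclude $f_*\in L^p_\omega$ \emph{together with} the quantitative bound. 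So the membership $f\in H^p_\omega$ and the estimate are obtained in a single stroke, not by separating a "qualitative" step from a "quantitative" one.

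The second gap is the claim $\|f\|_{H^p_\omega}=\|f^*\|_{L^p_\omega}$. In the unweighted case $\|f\|_{H^p}=\|f_*\|_{L^p}$, but the weighted analogue is \emph{not} an equality: Theorem~\ref{Hp} only yields the two-sided bound $\|f_*\|_{L^p_\omega}\le\|f\|_{H^p_\omega}\lesssim[\omega]_{A_p}^{1/(p-1)}\|f_*\|_{L^p_\omega}$, and the extra $[\omega]_{A_p}^{1/(p-1)}$ factor from the nontangential-maximal function bound \eqref{NonM} is exactly where the remaining weight dependence enters. Your last paragraph conflates this clean, identifiable factor with an unspecified "loss from aperture change and/or good-$\lambda$"; once the good-$\lambda$ is replaced by the paper's duality argument, there is no aperture loss to account for and the bookkeeping is simply \eqref{Hp3} composed with Corollary~\ref{Mc2} and \eqref{co1}.

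To salvage your route you would need to actually supply the weighted good-$\lambda$ inequality on the admissible regions of $\BN$ for Lusin area integrals of $M$-harmonic functions with $A_\infty$ weights, which is a nontrivial standalone project. The paper's alternative — Green's formula plus the already-proved upper bound for the Bergman-gradient square function — is both shorter and avoids creating new prerequisites.
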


	Theorem \ref{Main2} (ii) is different from Theorem \ref{Main1} (ii) as now we require $f\in H(\BN)$. The upper bound of the weighted area integral in Theorem \ref{Main2} just follows from Theorem \ref{Main1} by virtue of Theorem \ref{Hp}. Turning to the lower bound, we apply duality, and combine Corollary \ref{Mcor1}. Thanks to Theorem \ref{Main1} and Corollary \ref{Mcor1}, our approach to the lower bound avoid the usual standard discussions of distribution functions in the weighted case \cite{BG,GW}.
	
	
	
	\

	Another important application of Theorem \ref{Main2} is about the weighted inequalities of Volterra integral operators. Given $g\in H(\BN)$, the Volterra integral (extended Ces\`aro) operator $J_g$ for $f\in H(\BN)$ is given by
	$$J_g f(z)=\int_{0}^{1}f(tz)Rg(tz)\frac{dt}{t}, \ \ \forall  z\in\BN.$$
	The study of Volterra integral operators has become a central topic in complex analysis.  Recall that BMOA is the set consisting of all holomorphic functions $f\in H^2$ such that
	$$\|f\|_{\mathrm{BMO}}^{2}:=|f(0)|^{2}+\operatorname*{sup}{\frac{1}{\sigma(B)}}\int_{B}|f-f_B|^{2}d\sigma<\infty,$$
	where the supremum is taken over all nonisotropic metric ball $B$ and $f_B=\frac{1}{\sigma(B)}\int_Bf$ is the average of $f$ over $B$. Let $n=1$. The integral operator $J_g$ was first introduced by Pommerenke \cite{Po}, and he proved that $J_g$ is bounded on $H^2$ if and only if $g\in BMOA$. Later on, Aleman and Siskakis \cite{AS} extended Pommerenke's result and established the boundedness of $J_g$ on $H^p$. The boundedness of $J_g$ between Hardy spaces for all index choices was studied in \cite{AC}. Besides, the spectra of $J_g$ on $H^p_\omega$ was considered in \cite{APe}. In higher dimensions, the general variant of Volterra integral operators $J_g$ (as defined here) was introduced by Hu \cite{Hu}, where he also investigated the boundedness and compactness of $J_g$ on the mixed norm space $H_{p,q}(\varphi)$. When $n\geq1$, Pau \cite{P} completely established the boundedness of $J_g$ between Hardy spaces $H^p$ and $H^q$ for the full range $0<p,q<\infty$. In particular, when $0<p=q<\infty$, Pau proved that $J_g$ is bounded on $H^p$ if and only if $g\in BMOA$. The interested reader is referred to \cite{AC, AS, CPPR, MPPW, PPWG, PRMem} for more information.
	
	The following theorem concerns the weighted inequalities of $J_g$.
	
\begin{theorem}\label{Main3}
	Let $1<p<\infty$ and $\omega\in A_p$. Let $g\in BMOA$. Then for any $f\in H_\omega^p$,
	$$\|J_gf\|_{H_\omega^p}\leq C[\omega]_{A_p}^{\varphi(p)}\|f\|_{H_\omega^p},$$
	where
	$$\varphi(p)=\frac{1}{(p-1)^2}\max\{1/2,p-1\}+\max\{1/2,1/(p-1)\}$$
	and the constant $C>0$ depends only on $n$, $p$ and $g$.
\end{theorem}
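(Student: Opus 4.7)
The plan is to reduce the weighted Hardy-norm bound to a weighted estimate for the Lusin area integral of $J_g f$, and then control the latter using the Carleson-measure structure afforded by $g\in BMOA$.

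A direct power-series computation gives the identity $R(J_g f)(z)=f(z)Rg(z)$ together with $J_g f(0)=0$, so $J_g f$ is a holomorphic function on $\BN$ vanishing at the origin. Applying Theorem \ref{Main2}(ii) to $J_g f$ yields
$$\|J_g f\|_{H^p_\omega}\leq C[\omega]_{A_p}^{\frac{1}{(p-1)^2}\max\{1/2,\,p-1\}}\|S_{\alpha}^R(J_g f)\|_{L^p_\omega},$$
which already supplies the first summand of $\varphi(p)$.

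It then remains to establish the companion area-integral estimate
$$\|S_{\alpha}^R(J_g f)\|_{L^p_\omega}\leq C[\omega]_{A_p}^{\max\{1/2,\,1/(p-1)\}}\|f\|_{H^p_\omega},$$
whose exponent matches Theorem \ref{Main1}(i) and yields the second summand of $\varphi(p)$. Using $R(J_g f)=f\cdot Rg$ one writes
$$S_{\alpha}^R(J_g f)(\zeta)^2=\int_{D_\alpha(\zeta)}|f(z)|^2\,\frac{d\mu_g(z)}{(1-|z|^2)^n},\qquad d\mu_g(z):=|Rg(z)|^2(1-|z|^2)\,dv(z),$$
and, since $g\in BMOA$ on $\BN$, the measure $d\mu_g$ is a Carleson measure with $\|\mu_g\|_C\lesssim\|g\|_{BMOA}^2$. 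Thus the task reduces to a weighted tent-space Carleson embedding for the holomorphic function $f$, with the \emph{sharp} $A_p$ constant $\max\{1/2,\,1/(p-1)\}$.

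To obtain this embedding I would follow the sparse-domination and local mean oscillation strategy already used to prove Lemma \ref{Rd1}, transplanted to the nonisotropic dyadic grid on $\SN$ and adapted so that the Carleson mass of $d\mu_g$ enters as an overall multiplicative factor. Concretely, I expect to derive a pointwise sparse bound of the form
$$S_{\alpha}^R(J_g f)(\zeta)\lesssim\|g\|_{BMOA}\Big(\sum_{Q\in\mathcal{S}}\langle f^{\ast}\rangle_Q^{\,2}\,\mathbf{1}_Q(\zeta)\Big)^{1/2}$$
for some sparse family $\mathcal{S}$ depending on $f$, and then invoke the standard sharp $A_p$ bound for sparse square functions, which produces exactly the exponent $\max\{1/2,\,1/(p-1)\}$. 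Adding the two inequalities yields the claimed additive exponent $\varphi(p)$.

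The principal obstacle is this sparse bound: the trivial pointwise estimate $|f(z)|\leq CM f^{\ast}(\zeta)$ on the Kor\'anyi region $D_\alpha(\zeta)$ combined with the Carleson property of $d\mu_g$ only yields the $A_p$ exponent $1/(p-1)$, which is not sharp when $p>3$. Achieving the sharper $\max\{1/2,\,1/(p-1)\}$ requires a simultaneous localization of the holomorphic oscillation of $f$ and of the Carleson mass of $d\mu_g$, carried out in the same spirit as the localization used for $\g^{*}_{X,\lambda}$ in Lemma \ref{Rd1}.
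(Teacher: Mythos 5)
Your proposal follows essentially the same route as the paper. The paper reduces in exactly the same way: it applies Theorem~\ref{Main2}(ii) to $J_gf$ (which vanishes at the origin since $Rg(0)=0$), rewrites $S_\alpha^R(J_gf)$ as $S_\alpha^\mu(P[f_*])$ with $d\mu=(1-|z|^2)|Rg|^2\,dv$ a Carleson measure, and then invokes Lemma~\ref{Rd3} --- which is precisely the sparse-domination/local-mean-oscillation Carleson-embedding estimate you propose to prove from scratch by transplanting the argument of Lemma~\ref{Rd1} --- followed by Theorem~\ref{Hp} to pass from $\|f_*\|_{L^p_\omega}$ to $\|f\|_{H^p_\omega}$. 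The only gap in your write-up is that Lemma~\ref{Rd3} has already been established in the paper, so the ``principal obstacle'' you flag is already dispatched; and the sparse bound you sketch is pointwise on $S_\alpha^R(J_gf)$, whereas the paper's argument in Section~\ref{prooflemmas} actually controls the square $(\g^*_{\mu,\lambda}(P[f_*]))^2$ via the median and local oscillations and obtains a sum over dilated cubes $2^jQ$ with geometric decay rather than a single pointwise sparse domination --- a technically different intermediate object, but to the same end and with the same $A_p$ exponent.
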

	
	We would like to remark that this is the first time that the weighted inequalities for Volterra integral operators are established. However, we do not know whether the exponent $\varphi(p)$ is optimal, which is also an interesting question for further study.
	
	Let us now give the idea of the proof of Theorem \ref{Main3}.  For $f\in H^p_\omega$ and $g\in BMOA$, it has been shown in \cite{Hu} that
	$$R(J_gf)(z)=f(z)Rg(z), \quad \forall  z\in\BN.$$
	It then follows that for $\alpha>1/2$,
	$$S_\alpha^R(J_gf)(\zeta)=\left(\int_{D_\alpha(\zeta)}|f(z)|^2\frac{(1-|z|^2)(Rg(z))^2dv(z)}{(1-|z|^2)^n}\right)^{1/2},\quad \forall  \zeta\in\SN.$$
	On the other hand, Theorem \ref{Main2} yields
	$$\|J_gf\|_{H_\omega^p}\leq C[\omega]_{A_p}^{\frac{2}{(p-1)^2}\max\{1/2,p-1\}}\|S_\alpha^R(J_gf)\|_{L^p_\omega},$$
	where $\alpha>1$. Hence, to finish the proof of Theorem \ref{Main3}, it suffices to show
	$$\|S_\alpha^R(J_gf)\|_{L^p_\omega}\leq C[\omega]_{A_p}^{\max\{1/2,1/(p-1)\}}\|f\|_{H_\omega^p}.$$
	
	Now we give our effort to the study of $S_\alpha^R(J_gf)$. It is known from \cite[Section 5.4]{Z} that for $g\in H(\BN)$,  $g\in \text{BMOA}$  if and only if the measure $(1-|z|^2)|Rg(z)|^2dv(z)$ is a Carleson measure. Recall that a positive Borel measure $\mu$ on $\BN$ is called a Carleson measure if there exists a constant $C>0$ such that $$\mu(\{z\in\BN: |1-\langle z,\zeta\rangle|^{1/2}<r\})\leq Cr^{2n}$$ for all $\zeta\in\SN$ and $r>0$. In the following, we consider the following more general area integral
	$$S^{\mu}_{\alpha}(u)(\zeta)=\left(\int_{D_\alpha(\zeta)}|u(z)|^2\frac{d\mu(z)}{(1-|z|^2)^n}\right)^{1/2},\quad \forall  \zeta\in\SN,$$
	where $u$ is a measurable function on $\BN$, and $\mu$ is a Carleson measure on $\BN$. We would like to remark that the area integral $S_\alpha^\mu$ turns to be quite useful in the theory of Volterra integral operators, Hardy spaces, Bergman spaces, and tent spaces; see \cite{Co,LvP,PWZ,PRS,Wu} for more information. Furthermore, as before we have
	\begin{align}\label{Area2}
		S_{\alpha}^\mu(u)(\zeta)\leq \alpha^{\lambda n/2} \g^*_{\mu,\lambda}(u)(\zeta), \quad \forall  \zeta\in\SN,
	\end{align}
	where $\lambda\in\mathbb{N}$ with $\lambda\geq4$, and $\g^*_{\mu,\lambda}$ is given by
	$$\g^*_{\mu,\lambda}(u)(\zeta)=\left(\int_{\BN}|u(z)|^2\left(\frac{1-|z|^2}{|1-\langle z,\zeta\rangle|}\right)^{\lambda n}\frac{d\mu(z)}{(1-|z|^2)^n}\right)^{1/2}, \quad \forall  \zeta\in\SN.$$
	From the preceding arguments, the proof of Theorem \ref{Main3} is reduced to the proof of the following lemma.
	
	\begin{lemma}\label{Rd3}
		Let $1<p<\infty$ and $\omega\in A_p$. Let $\mu$ be a Carleson measure on $\BN$. Then for $f\in L^p_\omega$,
		$$\|\g^*_{\mu,\lambda}(P[f])\|_{L^p_\omega}\leq C[\omega]_{A_p}^{\max\{1/2,1/(p-1)\}}\|f\|_{L^p_\omega},$$
		where the constant $C>0$ depends only on $\mu$, $\lambda$, $n$ and $p$.
	\end{lemma}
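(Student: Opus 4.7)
The plan is to mirror the three-step strategy used for Lemma \ref{Rd1}: first establish the unweighted $L^2(\SN)\to L^2(\SN)$ boundedness of the sublinear operator $T_\mu f:=\g^*_{\mu,\lambda}(P[f])$; then derive a sparse bilinear form bound via the local mean oscillation technique on the sphere viewed as a space of homogeneous type with the nonisotropic distance $|1-\langle\zeta,\eta\rangle|^{1/2}$; and finally read off the sharp weighted estimate from the known weighted bounds for sparse square-function forms. The Carleson condition on $\mu$ plays the role analogous to the factor $(1-|z|^2)^{n-1}dv(z)$ in Lemma \ref{Rd1}.

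For Step 1, I would compute $\|T_\mu f\|_{L^2(\SN)}^2$ by interchanging the order of integration: since $\lambda\geq 4$, the Forelli--Rudin type integral $\int_{\SN}|1-\langle z,\zeta\rangle|^{-\lambda n}d\sigma(\zeta)$ is comparable to $(1-|z|^2)^{n-\lambda n}$, so the inner $\SN$ factor contributes $(1-|z|^2)^n$ and cancels the denominator. This reduces $\|T_\mu f\|_{L^2(\SN)}^2$ to a constant times $\int_{\BN}|P[f](z)|^2 d\mu(z)$, which is bounded by $\|f\|_{L^2(\SN)}^2$ by the classical Carleson embedding theorem for the Poisson--Szeg\"o integral.

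For Step 2, I would view $T_\mu$ as the $L^2(\BN,d\mu)$-norm of a vector-valued linear operator with kernel
$$K_\mu(\zeta,\eta)(z)=P(z,\eta)\Bigl(\frac{1-|z|^2}{|1-\langle z,\zeta\rangle|}\Bigr)^{\lambda n/2}(1-|z|^2)^{-n/2},$$
and apply Lerner's local mean oscillation framework on $(\SN,|1-\langle\cdot,\cdot\rangle|^{1/2},d\sigma)$. For each nonisotropic ball $Q\subset\SN$, one decomposes $f=f\mathbf{1}_{B^*}+f\mathbf{1}_{(B^*)^c}$ for a suitable dilate $B^*$ of $Q$, controls the local piece by the unweighted Hardy--Littlewood maximal function, and estimates the far piece by a H\"ormander-type regularity estimate on $K_\mu$. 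Combined with the $L^2$ bound from Step 1, this yields a sparse bilinear form bound for $T_\mu^2$, from which the weighted inequality with the sharp exponent $\max\{1/2,1/(p-1)\}$ follows from the sparse-to-weighted machinery for square functions developed by Lerner.

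The principal obstacle is the verification of the H\"ormander regularity for $K_\mu$ while keeping the implicit constants depending only on the Carleson norm of $\mu$. The key quantity to estimate is
$$\int_{\BN}|P(z,\eta)-P(z,\eta')|^2\Bigl(\frac{1-|z|^2}{|1-\langle z,\zeta\rangle|}\Bigr)^{\lambda n}\frac{d\mu(z)}{(1-|z|^2)^n}$$
for $\eta,\eta'$ close and $\zeta$ far from both. Exploiting the size and smoothness estimates for $P(z,\cdot)$ together with a Whitney-type decomposition of $\BN$ into Kor\'anyi shells relative to $\eta$ and $\zeta$, one reduces the shell-by-shell contribution to bounds on the Carleson measure of suitable tents over nonisotropic balls. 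Once these geometric estimates are secured, the remainder of the argument parallels Lemma \ref{Rd1} almost verbatim.
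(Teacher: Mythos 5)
Your high-level plan is broadly aligned with the paper's: establish an $L^2$ bound via Carleson embedding, then deploy Lerner's local mean oscillation formula on $(\SN,d,\sigma)$ to obtain a sparse domination in the form $T_{2,m}^S$ and read off the sharp weighted constant. The $L^2$ step (Step 1) is essentially identical to what the paper does for the good function, using Fubini and the Forelli--Rudin integral to reduce to $\int_{\BN}|P[f]|^2\,d\mu\lesssim\|f\|_{L^2}^2$. And you are right that the final reduction uses $T_{2,m}^S$-type sparse square forms and Rubio de Francia extrapolation.

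However, there are two substantive gaps. First, there is no weak $(1,1)$ endpoint. You pass from the strong $L^2$ bound directly to the local mean oscillation argument, but Lerner's method requires a weak $(1,1)$ estimate (not an $L^2$ one) to control both the median term $|m_{(\g^*_{\mu,\lambda}(P[f]))^2}(Q^0)|$ and the local piece $\g^*_{E_Q,\lambda}(P[f\chi_{8Q}])$ by $L^1$ averages of $f$: one needs $(\chi_Q\,\g^*_{\mu,\lambda}(P[f_{\mathrm{loc}}]))^*(\epsilon\sigma(Q))\lesssim\sigma(Q)^{-1}\|f_{\mathrm{loc}}\|_{L^1}$. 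The paper devotes all of Section \ref{weaktype} (Theorem \ref{Weak7}, Lemma \ref{Weak}, the aperture Proposition \ref{T}) to this endpoint, and it is far from automatic since $\g^*_{\mu,\lambda}$ is a square function with all apertures, not a Calder\'on--Zygmund operator. Your proposal silently assumes it. Second, your H\"ormander-type quantity is in the wrong variable. You propose to estimate $\int_{\BN}|P(z,\eta)-P(z,\eta')|^2\cdots\,d\mu$, i.e., smoothness of the kernel in the integration variable $\eta$. But the oscillation estimate needed in the local mean oscillation machinery is $\sup_{\zeta\in Q}\bigl|[\g^*_{E_Q^c,\lambda}(P[f])(\zeta)]^2-[\g^*_{E_Q^c,\lambda}(P[f])(\zeta_Q)]^2\bigr|$, which requires smoothness in the output variable $\zeta$ — that is, of the factor $|1-\langle z,\zeta\rangle|^{-\lambda n}$ (the paper's Lemma \ref{Poisson7} and Lemma \ref{P3}), with $P(z,\eta)$ left intact. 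The $\eta$-smoothness of $P(z,\cdot)$ is used, but in the separate weak $(1,1)$ proof via cancellation of the Calder\'on--Zygmund bad functions, not for the oscillation. Related to this, your decomposition is only in the support of $f$; the paper must additionally split the integration domain $\BN$ into $E_Q$ and $E_Q^c$ (Section \ref{keyestimate}), because for $\zeta\in Q$ the weight factor is of order one on $E_Q$ (so the $\zeta$-oscillation there is useless) and decays only on $E_Q^c$; the near-$\BN$/far-$f$ piece needs the direct $L^1(Q)$ estimate of Lemma \ref{P2}, a third term your splitting does not produce.
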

	
	For brevity, in the sequel we always assume that $\mu$ is a Carleson measure on $\BN$, and denote by $\g^*_\lambda$ both $\g^*_{X,\lambda}$ and $\g^*_{\mu,\lambda}$.
	
	Finally, we present the layout of this paper. As discussed above, we mainly need to show Theorem \ref{Main1} (ii), Corollary \ref{Mcor1}, Lemma \ref{Rd1}, Theorem \ref{Main2} and Lemma \ref{Rd3}.  As for the proof of Theorem \ref{Main2}, we need to apply Theorem \ref{Main1}, Corollary \ref{Mcor1} and Theorem B. Our proofs of Lemma \ref{Rd1} and Lemma \ref{Rd3} rely on the local mean oscillation technique, which stem from harmonic analysis. Even though our problems live in the world of complex analysis, we deal with them by utilizing the methods from real analysis. It seems that the main reason why such methods are feasible to carry out for our problems is that the geometric structures of $\SN$ share almost the same nature as those of Euclidean spaces.
	
	This paper is organized as follows. Section \ref{pre} is devoted to introducing the nonisotropic metric on $\SN$, maximal function, weighted Hardy spaces, dyadic systems on $\SN$ and the local mean oscillation formula (also called sparse domination). In Section \ref{bergra}, we introduce the Bergman gradient in $\BN$ and present some basic inequalities. In Section \ref{weaktype}, we show the weak type $(1,1)$ inequalities of $g_\lambda^*$, which allows us to implement sparse domination. We continue to provide some key estimates of $g_\lambda^*$ in Section \ref{keyestimate}. Our estimate of the smoothness of the Poisson kernel is quite different from that in harmonic analysis as some difficulties arise from $XP(z,\zeta)$ and $P(z,\zeta)$. Finally, we show Lemma \ref{Rd1} and Lemma \ref{Rd3} in Section \ref{prooflemmas}. The proofs of Theorem \ref{Main1}, Corollary \ref{Mcor1}, Theorem \ref{Main2} and Theorem \ref{Main3} will be finished in Section \ref{section6}.

	\bigskip

	\section{Preliminaries}\label{pre}
	\subsection{Notation}
	Throughout the paper, we write $A\lesssim B$ or $B\gtrsim A$ if $A\leq CB$ for some absolute positive constant $C$. Besides, $A\asymp B$ means that these inequalities as well as their inverses hold. We use the same letter $C$ to denote various positive constants which may vary at each occurrence. Given $p\in(1,\infty)$, we will denote by $p'=p/(p-1)$ its H\"older conjugate exponent. Let $\mathbb{N}=\{1,2,3,\cdots\}$, and $\mathbb{Z}$ be the set of integers. 
	
	Let $\omega\in A_p$. It is well-known that if $\omega\in A_p$, then $\omega^{-p'/p}\in A_{p'}$ and  $[\omega^{-p'/p}]_{A_{p'}}=[\omega]_{A_p}^{1/(p-1)}$ (for example, see \cite{Graf}). In the sequel, we write $\omega'=\omega^{-p'/p}$ for simplicity. Let $E$ be a measurable subset of $\SN$. We write $\omega(E)=\int_E\omega d\sigma$. We denote by $L^{1,\infty}$ the set of all complex-value  measurable functions on $\SN$ such that $$\|f\|_{L^{1,\infty}}=\sup_{\alpha>0}\alpha \sigma(\{\zeta\in \SN:|f(\zeta)|>\alpha\})<\infty.$$

	\subsection{Nonisotropic metric on $\SN$}\label{NMB}
	
	Let $n$ be a  positive integer, and $\mathbb{C}^n$ be the vector space of all ordered $n$-tuples $z=(z_1,\cdots,z_n)$ of complex numbers $z_i$. For any two points
	$$z=(z_1,\cdots,z_n), \ w=(w_1,\cdots,w_n)\in\CN,$$
	we write
	$\langle z,w\rangle=\sum_{i=1}^nz_i\overline{w}_i$ and $|z|=\sqrt{\langle z,z\rangle}$. The unit ball $\BN$ and unit sphere $\SN$ of $\mathbb{C}^n$ are defined as
	$$\BN=\{z\in\mathbb{C}^n:|z|<1\},\ \ \SN=\{z\in\mathbb{C}^n:|z|=1\}.$$
	
	For $z,w\in\overline{\BN}:=\BN\cup\SN$, let
	$$d(z,w)=|1-\langle z,w\rangle|^{1/2}.$$
	One can check that $d$ satisfies the triangle inequality  on $\overline{\BN}$. The restriction of $d(\cdot,\cdot)$ to $\SN$ is the so-called nonisotropic metric. 
	For $\zeta\in\SN$ and $r>0$, denote by $B(\zeta,r)$ the nonisotropic metric ball in $\SN$ with the center $\zeta$ and the radius $r$. When $r\geq\sqrt{2}$, $B(\zeta,r)=\SN$. Note that for all $\zeta\in\SN$ and $0<r\leq\sqrt{2}$ (see \cite{Z} for more details), 
	\begin{align}\label{eq2}
		\sigma(B(\zeta,r))\asymp r^{2n}.
	\end{align}
	It follows that there exists a constant $C>0$ depending on $n$ such that for any $\zeta\in\SN$, 
	\begin{align}\label{eq5}
		\sigma(B(\zeta,\delta r))\leq C\delta^{2n}\sigma(B(\zeta,r)),
	\end{align}
	whenever $0<r<\infty$ and $1\leq\delta<\infty$. Actually, the triple $(\SN,d,\sigma)$ forms a space of homogeneous type. We refer the interested reader to \cite{CW2} for more information on spaces of homogeneous type.
	
	We also need the following useful fact $$v(\{z\in\BN:d(\zeta,z)<r\})\lesssim r^{2n+2},$$
	whenever $0<r<\infty$. Indeed, if $0<r<1$, the desired result can be obtained by \eqref{eq2}, \eqref{eq5}, and the integration in polar coordinates (\cite[Lemma 1.8]{Z}). If $r\geq1$, then $$v(\{z\in\BN:d(\zeta,z)<r\})\leq1\leq r^{2n+2}.$$
	
	Another useful fact proved in \cite{MPPW} reads that for $\alpha>1$,
	\begin{align}\label{eq10}
		\int_{\SN}\int_{D_{\alpha}(\zeta)}F(z)\frac{d\mu(z)}{(1-|z|^2)^n}d\sigma(\zeta)\asymp\int_{\BN}F(z)d\mu(z)
	\end{align}
	holds for any positive measurable function $F$ and  any  finite positive measure $\mu$ on $\BN$. The reader is referred to \cite{R} and \cite{Z} for more information about complex analysis on $\SN$. 
	
	\subsection{Maximal function}
	For $f\in L^1$, the Hardy-Littlewood maximal function on $\SN$ denoted by $Mf$ is defined as
	$$Mf(\zeta)=\sup_{\delta>0}\frac{1}{\sigma(B(\zeta,\delta))}\int_{B(\zeta,\delta)}|f|d\sigma, \ \ \zeta\in\SN.$$
	It has already been proved in \cite{Z} that $M$ is of strong type $(p,p)$ and weak type $(1,1)$. Furthermore, the following weighted norm inequalities can be found in \cite{Buc,Graf}.

	\begin{theorem}\label{Apw}
		Let $1<p<\infty$ and $\omega\in A_p$. Then for any $f\in L_\omega^p$, $$\|Mf\|_{L_\omega^p}\lesssim[\omega]_{A_p}^{1/(p-1)}\|f\|_{L_\omega^p}.$$
	\end{theorem}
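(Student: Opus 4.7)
The plan follows Buckley's classical strategy, which applies in any space of homogeneous type and therefore to $(\SN, d, \sigma)$ thanks to the doubling property \eqref{doub1}. The argument naturally splits into three steps: a Muckenhoupt weak-type bound at exponent $p$, a quantitative openness of the $A_p$ class, and a Marcinkiewicz-type interpolation that converts the weak bound at a slightly smaller exponent into the desired strong bound.

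Step 1. First I would establish Muckenhoupt's weighted weak-type inequality
$$\omega(\{\zeta\in\SN:Mf(\zeta) > \lambda\}) \leq C[\omega]_{A_p} \lambda^{-p}\|f\|_{L^p_\omega}^p.$$
Given $\zeta$ with $Mf(\zeta) > \lambda$, pick a nonisotropic ball $B \ni \zeta$ witnessing the inequality, apply H\"older with exponents $p,p'$ to the factorization $|f| = (|f|\omega^{1/p})\cdot\omega^{-1/p}$, and invoke the $A_p$ condition to replace the resulting average of $|f|$ by an average of $|f|^p\omega$ up to the multiplicative factor $[\omega]_{A_p}$. A Vitali-type covering of the level set, available for $(\SN,d,\sigma)$ via \eqref{doub1}, completes this step.

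Step 2. Next I would invoke the quantitative openness of $A_p$: there exists $\epsilon>0$ of order $c_n/[\omega]_{A_p}^{1/(p-1)}$ such that $\omega \in A_{p-\epsilon}$ with $[\omega]_{A_{p-\epsilon}} \leq 2[\omega]_{A_p}$. The standard derivation is to apply a sharp reverse H\"older inequality (of P\'erez--Hyt\"onen type) to the dual weight $\omega' = \omega^{-1/(p-1)} \in A_{p'}$, whose Muckenhoupt constant equals $[\omega]_{A_p}^{1/(p-1)}$, and then translate the reverse-H\"older exponent back into membership in $A_{p-\epsilon}$.

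Step 3. Applying Step 1 at the exponent $p-\epsilon$ in place of $p$, then splitting $f = f\chi_{\{|f|>\lambda/2\}} + f\chi_{\{|f|\leq\lambda/2\}}$ and integrating in $\lambda$ in the usual Marcinkiewicz fashion, one obtains
$$\|Mf\|_{L^p_\omega}^p \lesssim \frac{[\omega]_{A_p}}{\epsilon} \|f\|_{L^p_\omega}^p,$$
and substituting $1/\epsilon \sim [\omega]_{A_p}^{1/(p-1)}$ from Step 2 produces the announced $[\omega]_{A_p}^{1/(p-1)}$ dependence. The main obstacle is precisely obtaining the sharp exponent $1/(p-1)$ rather than the $1/p$ that Step 1 alone would yield; this improvement relies on the quantitative openness of $A_p$ in the space of homogeneous type $(\SN,d,\sigma)$, and one must track constants carefully to ensure that all geometric factors depend only on $n$ (which is guaranteed by the uniform doubling of $\sigma$), so as not to contaminate the weight-dependent part of the final estimate.
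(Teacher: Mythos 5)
The paper does not prove this theorem at all; it simply cites Buckley \cite{Buc} and Grafakos \cite{Graf}, and your outline is precisely Buckley's classical argument: a Muckenhoupt weak $(p,p)$ bound with constant $[\omega]_{A_p}$, the quantitative self-improvement $A_p\subset A_{p-\epsilon}$ with $\epsilon\sim[\omega]_{A_p}^{-1/(p-1)}$ (which you obtain via the sharp reverse H\"older inequality of P\'erez--Hyt\"onen, valid on spaces of homogeneous type by Hyt\"onen--P\'erez--Rela and hence on $(\SN,d,\sigma)$), and a Marcinkiewicz interpolation that contributes the factor $1/\epsilon$. The bookkeeping $[\omega]_{A_p}\cdot(1/\epsilon)\sim[\omega]_{A_p}^{p/(p-1)}$ indeed yields the exponent $1/(p-1)$ after taking $p$th roots, so the proposal is correct and matches the cited proof.
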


	Let $\alpha>1/2$. The non-tangential maximal function of a measurable function $u$ on $\BN$ is defined by
	\begin{equation}\label{haonon}
		N_\alpha u(\zeta)=\sup_{z\in D_\alpha(\zeta)}|u(z)|,\ \ \zeta\in\SN.
	\end{equation}
	From \cite[Theorem 4.10]{Z}, we have that for any $f\in L^1$, $N_\alpha(P[f])(\zeta)\lesssim Mf(\zeta)$. Then Theorem \ref{Apw} implies that for $1<p<\infty$ and $\omega\in A_p$,
	\begin{align}\label{NonM}
		\|N_\alpha(P[f])\|_{L^p_\omega}\lesssim[\omega]_{A_p}^{1/(p-1)}\|f\|_{L^p_\omega}, \ \ \forall  f\in L_\omega^p.
	\end{align}
	
	\subsection{Weighted Hardy spaces}
	Recall that the weighted Hardy space $H_\omega^p$ is defined as the set of all holomorphic functions $f$ on $\BN$ equipped with the norm: $$\|f\|_{H_\omega^p}^p=\sup_{0<r<1}\int_{\SN}|f(r\zeta)|^p\omega(\zeta)d\sigma(\zeta)<\infty.$$
	In this  subsection, we will show that each $f\in H_\omega^p$ can be written as the Poisson integral of a measurable function. To that end, we introduce the K-limit of a function $f$.
	\begin{definition}
		A function $f$ on $\BN$ has K-limit $L$ at some $\zeta\in\SN$ if for every $\alpha>1/2$,
		$$\lim_{z\to\zeta,z\in D_\alpha(\zeta)}f(z)=L.$$
		In this case, we write $\text{K-}\lim f(\zeta)=L$. For brevity, we denote by $f_*(\zeta)=\text{K-}\lim f(\zeta)$ if it exists.
	\end{definition}
	
	It was shown in \cite[Theorem 4.25 and Corollary 4.27]{Z} that the K-limit exists whenever $f$ belongs to $H^p$.
	
	\begin{theorem}\label{Hp2}
		Let $0<p<\infty$. If $f\in H^p$, then $f_*(\zeta)$ exists for almost all $\zeta\in \SN$. Moreover, $\|f\|_{H^p}=\|f_*\|_{L^p}$ and
		$$\lim_{r\to1^-}\int_{\SN}|f(r\zeta)-f_*(\zeta)|^pd\sigma(\zeta)=0.$$
		In addition, if $1\leq p<\infty$, then $f=P[f_*]$.
	\end{theorem}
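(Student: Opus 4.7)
The plan splits the theorem into three tasks: (i) existence of the K-limit $f_*$ almost everywhere, (ii) the norm identity $\|f\|_{H^p}=\|f_*\|_{L^p}$ together with $L^p$-convergence of the dilates, and (iii) the Poisson representation $f=P[f_*]$ for $p\geq 1$. I would handle the case $p\geq 1$ first, where the duality structure is available, and then bootstrap to $0<p<1$ through a subharmonic majorant argument.

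For $p\geq 1$, the dilates $f_r(\zeta):=f(r\zeta)$ form a bounded family in $L^p(\SN)$ by the definition of $\|f\|_{H^p}$. When $p>1$, reflexivity yields a subsequence $r_k\to 1$ with $f_{r_k}\to g$ weakly in $L^p$ and $\|g\|_{L^p}\leq\|f\|_{H^p}$. Fixing $z\in\BN$ and applying the invariant Poisson representation on the smaller closed ball $\overline{r_k\BN}$ gives $f(z)=\int_{\SN}P(z/r_k,\zeta)f(r_k\zeta)\,d\sigma(\zeta)$; letting $k\to\infty$ and using that $P(z/r_k,\cdot)$ converges uniformly on $\SN$ to $P(z,\cdot)\in L^{p'}(\SN)$ as $r_k\to 1$, weak convergence delivers $f=P[g]$ pointwise on $\BN$. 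For $p=1$, weak-$*$ compactness in the dual of $C(\SN)$ produces only a limit measure, so the extra step is to rule out a singular part; this can be carried out either by an inner-outer factorization on the ball or by an $L\log L$ majorant argument that upgrades $L^1$-boundedness of $\{f_r\}$ to uniform integrability.

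Once $f=P[g]$ is known, the pointwise bound $N_\alpha(P[g])\lesssim Mg$ already recalled in the paper together with the Hardy--Littlewood maximal inequality (Theorem \ref{Apw} with $\omega\equiv 1$) yields $N_\alpha f\in L^p$. A standard approximation argument then shows that $P[g]$ admits the K-limit $g$ at a.e.\ $\zeta\in\SN$: decompose $g=g_1+g_2$ with $g_1$ continuous on $\SN$ and $\|g_2\|_{L^1}$ small, handle $P[g_1]$ by uniform continuity up to the boundary, and control $P[g_2]$ by the weak-type maximal bound. This identifies $f_*=g$ almost everywhere. The dominated convergence theorem, with integrable majorant $|f_r-f_*|^p\leq 2^p(N_\alpha f)^p$, then yields both $L^p$-convergence $f_r\to f_*$ and the norm identity $\|f_*\|_{L^p}=\|f\|_{H^p}$.

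For $0<p<1$ the difficulty is that we lose both the dual space machinery and the linearity of the Poisson integral applied directly to $|f|^p$. My plan is to use $M$-subharmonicity of $|f|^p$ on $\BN$ to construct, on each slice $r\SN$, a nonnegative $L^1(\SN)$ majorant $h_r$ with $|f(z)|^p\leq P[h_r](z/r)$ for $z\in r\BN$; passing to a weak-$*$ limit point of $\{h_r\}$ in the space of measures yields an $L^1$ Poisson majorant for $|f|^p$ that reduces the existence of the K-limit to the previously established Fatou theorem for Poisson integrals of $L^1$ functions, after which the norm identity and $L^p$-convergence follow exactly as before. The main obstacle in the whole proof, in my view, is the endpoint $p=1$: weak compactness there lives only in the space of finite measures, and eliminating the possible singular component of the boundary measure is the genuinely delicate step, amounting in essence to the analog of the F.\ and M.\ Riesz theorem on $\SN$.
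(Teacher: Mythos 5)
The paper does not prove this statement: it is imported verbatim from Zhu's monograph and the text simply cites [Z, Theorem 4.25 and Corollary 4.27], so there is no in-text argument to compare your sketch against. Your reconstruction follows the standard architecture for this classical result (weak compactness and Poisson--Szeg\H{o} reproduction for $p\geq 1$, an $M$-subharmonic majorant for $0<p<1$), and the $p>1$ branch is essentially correct. You also correctly identify the F.\ and M.\ Riesz step on $\SN$ as the deepest input at the endpoint $p=1$; that theorem is in Rudin's book and is indeed what underlies the representation $f=P[f_*]$ for $H^1$.

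However, two steps of your outline have genuine gaps at the extreme exponents. At $p=1$, you propose to close with dominated convergence using the majorant $(N_\alpha f)^p$, justified by $N_\alpha(P[g])\lesssim Mg$ together with the maximal inequality. But at $p=1$ the maximal operator is only of weak type $(1,1)$: the bound $N_\alpha(P[g])\lesssim Mg$ with $g\in L^1$ gives $N_\alpha f\in L^{1,\infty}$, not $L^1$, so no integrable majorant is available by this route. (That $N_\alpha f\in L^p$ for $f\in H^p$ when $p\leq 1$ is a nontrivial theorem of Hardy space theory, not a byproduct of the kernel estimate.) The standard fix is to prove $\|f_r-f_*\|_{L^1}\to 0$ directly from the approximate-identity property of the Poisson--Szeg\H{o} kernel and density of $C(\SN)$ in $L^1$, or to work with uniform integrability and Vitali's theorem. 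For $0<p<1$, the phrase ``the norm identity and $L^p$-convergence follow exactly as before'' hides the same problem: there is no maximal-function argument producing the $L^p$ majorant, and the $M$-subharmonic--majorant strategy requires significant additional work (constructing an $H^1$ function dominating $|f|^p$, then reducing to the $p=1$ case) before the convergence and norm identity can be harvested. Those two points need to be filled in before the proposal is a complete proof.
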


	We now present a similar phenomenon as above in the weighted Hardy spaces, which can be stated as follows.
	
	\begin{theorem}\label{Hp}
		Let $1<p<\infty$ and $\omega\in A_p$. If $f\in H^p_\omega$, then $f_*(\zeta)$ exists for almost all $\zeta\in\SN$, and  $f=P[f_*]$. Moreover,  
		$$\|f_*\|_{L^p_\omega}\leq\|f\|_{H^p_\omega}\lesssim[\omega]_{A_p}^{1/(p-1)}\|f_*\|_{L^p_\omega}$$
		and
		$$\lim_{r\to1^-}\int_{\SN}|f(r\zeta)-f_*(\zeta)|^p\omega(\zeta)d\sigma(\zeta)=0.$$
	\end{theorem}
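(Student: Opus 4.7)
\medskip

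\textbf{Proof proposal.} The plan is to reduce Theorem \ref{Hp} to the unweighted Theorem \ref{Hp2}, and then use the non-tangential maximal function bound \eqref{NonM} to recover the quantitative weighted estimates.

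First, I would show the embedding $H^p_\omega\hookrightarrow H^1$. Fix $0<r<1$ and apply H\"older with exponents $p$ and $p'$:
\begin{align*}
\int_{\SN}|f(r\zeta)|\,d\sigma(\zeta)
&\leq\left(\int_{\SN}|f(r\zeta)|^p\omega\,d\sigma\right)^{1/p}\left(\int_{\SN}\omega^{-1/(p-1)}\,d\sigma\right)^{(p-1)/p}\\
&\leq\|f\|_{H^p_\omega}\,\left(\int_{\SN}\omega'\,d\sigma\right)^{1/p'},
\end{align*}
and the last factor is finite because $\omega\in A_p$ forces $\omega'=\omega^{-1/(p-1)}\in L^1$. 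Hence $f\in H^1$, and Theorem \ref{Hp2} gives that $f_*(\zeta)$ exists for $\sigma$-a.e.\ $\zeta\in\SN$ and $f=P[f_*]$.

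For the first inequality, since the K-limit implies the radial limit, $f(r\zeta)\to f_*(\zeta)$ as $r\to 1^-$ for a.e.\ $\zeta$, and Fatou's lemma applied to $\omega\,d\sigma$ yields
$$\int_{\SN}|f_*|^p\omega\,d\sigma\leq\liminf_{r\to1^-}\int_{\SN}|f(r\zeta)|^p\omega\,d\sigma\leq\|f\|_{H^p_\omega}^p.$$
For the reverse inequality, fix any $\alpha>1/2$. A short calculation shows that for $r$ sufficiently close to $1$ (depending only on $\alpha$) the point $r\zeta$ lies in the Kor\'anyi region $D_\alpha(\zeta)$, so
$$|f(r\zeta)|=|P[f_*](r\zeta)|\leq N_\alpha(P[f_*])(\zeta).$$
Integrating against $\omega\,d\sigma$ and invoking \eqref{NonM} gives
$$\int_{\SN}|f(r\zeta)|^p\omega\,d\sigma\lesssim[\omega]_{A_p}^{p/(p-1)}\|f_*\|_{L^p_\omega}^p$$
for all such $r$. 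Since the integrand is a continuous function of $r\in(0,1)$ (as $f$ is holomorphic), taking the supremum over $0<r<1$ produces $\|f\|_{H^p_\omega}\lesssim[\omega]_{A_p}^{1/(p-1)}\|f_*\|_{L^p_\omega}$.

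Finally, for the $L^p_\omega$ convergence, I would use the same pointwise domination together with the fact that $|f_*|\leq N_\alpha(P[f_*])$ almost everywhere (an immediate consequence of $f=P[f_*]$ and the K-limit definition), to obtain
$$|f(r\zeta)-f_*(\zeta)|^p\leq 2^p\,N_\alpha(P[f_*])(\zeta)^p$$
for all $r$ close to $1$. The right-hand side lies in $L^1(\omega\,d\sigma)$ by \eqref{NonM}, and since $f(r\zeta)\to f_*(\zeta)$ a.e., the dominated convergence theorem gives the desired vanishing. The only subtle point is guaranteeing $r\zeta\in D_\alpha(\zeta)$ uniformly, which is a routine check using $|1-\langle r\zeta,\zeta\rangle|=1-r\leq\alpha(1-r^2)$ once $r$ exceeds the threshold $\max\{0,1/\alpha-1\}$; this is the only place where the geometry of the admissible region enters, and it is not really an obstacle.
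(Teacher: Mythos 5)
Your proposal follows essentially the same route as the paper: Hölder to show $H^p_\omega\subset H^1$, then quote the unweighted Theorem \ref{Hp2} for the existence of $f_*$ and the representation $f=P[f_*]$, Fatou for the lower bound, non-tangential maximal function plus \eqref{NonM} for the upper bound, and dominated convergence for the $L^p_\omega$-limit.

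One step is mis-argued, though easily repaired. In the reverse inequality you fix an arbitrary $\alpha>1/2$, obtain $\int_{\SN}|f(r\zeta)|^p\omega\,d\sigma\lesssim[\omega]_{A_p}^{p/(p-1)}\|f_*\|_{L^p_\omega}^p$ only for $r>\max\{0,1/\alpha-1\}$, and then assert that continuity of the integrand in $r$ lets you pass to $\sup_{0<r<1}$. That inference is not valid: continuity alone does not make the supremum over all $r\in(0,1)$ controlled by the values near $r=1$, and in the weighted case there is no a priori monotonicity of the integral means $r\mapsto\int|f(r\zeta)|^p\omega\,d\sigma$ (the classical monotonicity uses subharmonicity of $|f|^p$ against the unweighted surface measure, which breaks once $\omega$ is inserted). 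The correct and simpler move, which the paper uses and which is already visible in your own final computation of the threshold, is to take $\alpha=1$ (or any $\alpha\geq 1$): then $\max\{0,1/\alpha-1\}=0$, so $r\zeta\in D_1(\zeta)$ for \emph{every} $0<r<1$, $|f(r\zeta)|\leq N_1(P[f_*])(\zeta)$ holds unconditionally, and \eqref{NonM} gives the bound for the full supremum with no further argument. The rest of the proof, including the domination $|f_*|\leq N_\alpha(P[f_*])$ and the dominated convergence step, is sound.
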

	
	\begin{proof}
		Recall $\omega'=\omega^{-p'/p}$. If $\omega\in A_p$, then $\omega'(\SN)<\infty$. Then H\"older's inequality gives
		\begin{align}\label{h}
			\int_{\SN}|f(r\zeta)|d\sigma(\zeta)=&\int_{\SN}|f(r\zeta)|\omega^{1/p}(\zeta)\omega^{-1/p}(\zeta)d\sigma(\zeta)\\
			\nonumber \leq&\|f\|_{H^p_\omega}(\omega'(\SN))^{1/p'},
		\end{align}
		which implies $f\in H^1$. Hence, $f_*(\zeta)$ exists for almost all $\zeta\in \SN$ and $f=P[f_*]$ according to Theorem \ref{Hp2}.
		
		For the second statement, notice that each region $ D_\alpha(\zeta)$ contains $r\zeta$ whenever $r$ is sufficiently close to $1$. Hence, $\lim_{r\to1^-}f_r(\zeta)=f_*(\zeta)$ and by Fatou's lemma $$\|f_*\|_{L^p_\omega}^p\leq\liminf_{r\to1^-}\int_{\SN}|f(r\zeta)|^p\omega(\zeta)d\sigma(\zeta)\leq\|f\|_{H^p_\omega}^p.$$
		On the other hand, since $r\zeta\in D_1(\zeta)$, one has $|f(r\zeta)|=|P[f_*](r\zeta)|\leq N_1(P[f_*])(\zeta)$. It follows from \eqref{NonM} that
		$$\int_{\SN}|f(r\zeta)|^p\omega(\zeta)d\sigma(\zeta)\leq\|N_1(P[f_*])\|_{L^p_\omega}^p\lesssim[\omega]_{A_p}^{p/(p-1)}\|f_*\|_{L^p_\omega}^p,$$
		which implies $\|f\|_{H^p_\omega}\lesssim[\omega]_{A_p}^{1/(p-1)}\|f_*\|_{L^p_\omega}$. Finally, by the dominated convergence theorem, we have
		$$\lim_{r\to1^-}\int_{\SN}|f(r\zeta)-f_*(\zeta)|^p\omega(\zeta)d\sigma(\zeta)=0.$$
		This completes the proof.
	\end{proof}

	\subsection{Dyadic systems}
	We introduce dyadic systems on $\SN$. The following theorem can be found in \cite{HK}.
	
	\begin{theorem}\label{de}
		Let $0<c_0\leq C_0<\infty$ and $0<r<1$ be fixed constants satisfying $12 C_0r\leq c_0$. Given a set of points $\zeta_j^k\in\SN$, $j\in\mathscr{A}_k$ (an index set), for every $k\in\mathbb{Z}$, with the properties that if $i\neq j$ then $d(\zeta_i^k,\zeta_j^k)\geq c_0r^k$, and for any $\zeta\in\SN$, $\min_{j\in\mathscr{A}_k}d(\zeta,\zeta_j^k)<C_0r^k$. Then there exists a family of open subsets (called dyadic cubes)
		$$\mathscr{D}:=\{Q_j^k\subset\SN:j\in\mathscr{A}_k,k\in\mathbb{Z}\}$$
		($\mathscr{D}$ is called a dyadic system with parameters $C_0$, $c_0$ and $r$) such that the following properties hold:
		\begin{itemize}
			\item[(i)] $\SN=\cup_{j\in\mathscr{A}_k}Q_j^k$ for all $k\in\mathbb{Z}$, and $Q_j^k\cap Q_i^k=\emptyset$ whenever $i\neq j$;
			\item[(ii)] If $\ell\geq k$, $i\in\mathscr{A}_\ell$, and $j\in\mathscr{A}_k$, then either $Q_i^\ell\subset Q_j^k$ or $Q_i^\ell\cap Q_j^k=\emptyset$;
			\item[(iii)] For every $k\in\mathbb{Z}$ and $j\in\mathscr{A}_k$,
			\begin{align}\label{dya1}
				B(\zeta_j^k,c_1r^k)\subset Q_j^k\subset B(\zeta_j^k,C_1r^k)=:B(Q_j^k),
			\end{align}
			where $c_1:=c_0/3$ and $C_1:=2C_0$; If $\ell\geq k$ and $Q^\ell_i\subset Q^k_j$, then $B(Q^\ell_i)\subset B(Q^k_j)$.
		\end{itemize}
	\end{theorem}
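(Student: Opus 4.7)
The plan is to build the cubes from a parent-pointer tree among the centers: each level-$(k+1)$ center is linked to a nearest level-$k$ center, and $Q_j^k$ will gather all points whose deep-level descendants' ancestor chains terminate at $\zeta_j^k$. The smallness $12C_0r\leq c_0$ is the quantitative input that prevents these ancestor chains from wandering, and property (iii) will be the only nontrivial step.

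\textbf{Step 1: tree and cubes.} Fix a tie-breaking rule once and for all. For each $i\in\mathscr{A}_{k+1}$ define the parent $\pi_k(i)\in\mathscr{A}_k$ by $d(\zeta_i^{k+1},\zeta_{\pi_k(i)}^k)=\min_{j\in\mathscr{A}_k}d(\zeta_i^{k+1},\zeta_j^k)$, and set $\pi_{\ell,k}=\pi_k\circ\cdots\circ\pi_{\ell-1}$ for $\ell>k$. The covering hypothesis yields $d(\zeta_i^{k+1},\zeta_{\pi_k(i)}^k)<C_0r^k$, hence by iteration
$$d(\zeta_i^\ell,\zeta_{\pi_{\ell,k}(i)}^k)<\sum_{m=k}^{\ell-1}C_0r^m\leq\frac{C_0r^k}{1-r},\quad \ell>k.$$
For a sufficiently deep reference level $\ell_0$, take the Voronoi partition $\{V_i^{\ell_0}\}_{i\in\mathscr{A}_{\ell_0}}$ of $\SN$ by the level-$\ell_0$ centers (with the fixed tie-break) and set
$$Q_j^k=\bigcup_{i\in\mathscr{A}_{\ell_0}:\,\pi_{\ell_0,k}(i)=j}V_i^{\ell_0}.$$
Properties (i), (ii), and (iv) then fall out formally: (i) is the partition property of the Voronoi cells, (ii) is the functoriality $\pi_{\ell,k}=\pi_{m,k}\circ\pi_{\ell,m}$, and (iv) follows by picking the nearest level-$(k+1)$ center to $\zeta_j^k$, which is forced by uniqueness to have $\pi_k=j$.

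\textbf{Step 2: ball inclusions, the main obstacle.} Property (iii) is the substantive step. For the outer inclusion $Q_j^k\subset B(\zeta_j^k,C_1r^k)$, any $\zeta\in V_i^{\ell_0}\subset Q_j^k$ satisfies $d(\zeta,\zeta_i^{\ell_0})<C_0r^{\ell_0}$ and $d(\zeta_i^{\ell_0},\zeta_j^k)<C_0r^k/(1-r)$; using $r\leq c_0/(12C_0)\leq 1/12$ and choosing $\ell_0$ large, the triangle inequality gives $d(\zeta,\zeta_j^k)<2C_0r^k=C_1r^k$. For the inner inclusion $B(\zeta_j^k,c_0r^k/3)\subset Q_j^k$, I take $\zeta\in B(\zeta_j^k,c_0r^k/3)$ with $\zeta\in V_i^{\ell_0}$ and walk the tree up to the level-$(k+1)$ ancestor $\zeta_*^{k+1}:=\zeta_{\pi_{\ell_0,k+1}(i)}^{k+1}$; the telescoping bound truncated at level $k+1$ combined with $d(\zeta,\zeta_j^k)<c_0r^k/3$ yields, for $\ell_0$ sufficiently large,
$$d(\zeta_*^{k+1},\zeta_j^k)\;<\;\frac{C_0r^{k+1}}{1-r}+\frac{c_0r^k}{3}+o(1).$$
If the level-$k$ parent $j':=\pi_k(\pi_{\ell_0,k+1}(i))$ were different from $j$, the defining minimality of $\pi_k$ would give $d(\zeta_j^k,\zeta_{j'}^k)\leq 2d(\zeta_*^{k+1},\zeta_j^k)$, and the separation $d(\zeta_j^k,\zeta_{j'}^k)\geq c_0r^k$ would force $r\geq c_0/(6C_0+c_0)$, contradicting $12C_0r\leq c_0$ (note $12C_0>6C_0+c_0$ since $c_0\leq C_0$). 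Hence $\pi_{\ell_0,k}(i)=j$ and $\zeta\in Q_j^k$.

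The hard part is thus property (iii), which rests on the balance between the geometric-series tail $C_0r/(1-r)$ and the separation gap $c_0r$: the hypothesis $12C_0r\leq c_0$ is calibrated so that the tail is strictly smaller than the gap, preventing the ancestor chain from crossing a Voronoi boundary between level-$k$ centers. The remaining verifications (Borel measurability, independence of $Q_j^k$ on the choice of $\ell_0$, absorption of measure-zero boundary issues through the fixed tie-break) are administrative and follow standard template arguments for spaces of homogeneous type.
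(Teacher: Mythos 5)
The paper does not prove this theorem; it is cited verbatim from Hyt\"onen--Kairema \cite{HK}, so there is no internal proof to compare against. Your blind attempt does capture the right intuition---a parent-pointer tree among the centers, with $12C_0 r\leq c_0$ forcing the geometric-series tail of ancestor jumps to be smaller than the separation gap, so ancestor chains cannot drift across Voronoi boundaries---and the two numerical estimates in Step~2 (outer inclusion and the $r>c_0/(6C_0+c_0)$ contradiction) are arithmetically correct.

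However, there is a genuine gap at the point you dismiss as ``administrative.'' Your cube $Q_j^k=\bigcup_{i:\,\pi_{\ell_0,k}(i)=j}V_i^{\ell_0}$ is built from a \emph{fixed} deep reference level $\ell_0$, so the construction only produces cubes for $k<\ell_0$, and the theorem demands a single coherent family for all $k\in\mathbb{Z}$. The natural fix, letting $\ell_0\to\infty$, does not work directly because the Voronoi partition at level $\ell_0+1$ is \emph{not} a refinement of the one at level $\ell_0$ (the cells are built from unrelated center sets), so the set $\bigcup_{i:\,\pi_{\ell_0,k}(i)=j}V_i^{\ell_0}$ genuinely changes as $\ell_0$ grows; only the inner/outer ball bounds are uniform in $\ell_0$, not the set itself. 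Thus the claimed ``independence of $Q_j^k$ on the choice of $\ell_0$'' is false, and without a stable definition your verification of (i), (ii), (iv) is circular. Making the cubes a true Borel partition of $\SN$ compatible across all scales---rather than a partition only up to a measure-zero discrepancy that you can shove into a tie-break---is precisely the substantive technical contribution of \cite{HK} (and is already delicate in Christ's earlier version, where disjointness holds only a.e.). Your proof would need either a well-founded limiting/closure definition of the cubes together with a careful assignment of the accumulating boundary set, or the probabilistic/top-down construction of \cite{HK}; as written it does not close.
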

	
	\begin{definition}\label{def2.8}
		Given a cube $Q_j^k$, denote by $\zeta_j^k$ the ``center" of $Q_j^k$ and $r^k$ the ``radius" of $Q_j^k$. For $\delta>1$, we denote by $\delta Q_j^k$ the ``dilated" set $\delta Q_j^k=B(\zeta_j^k,\delta C_1r^k)$.
	\end{definition}
	
	\begin{remark}\label{de1}
		The following observations are very helpful.
		\begin{itemize}
			\item [(i)] It follows from Theorem \ref{de} that for each $Q_j^k, j\in\mathscr{A}_k, k\in\mathbb{Z}$, there is at least one child $Q_i^{k+1}, i\in\mathscr{A}_{k+1}$ and exactly one parent $Q_\ell^{k-1}, \ell\in\mathscr{A}_{k-1}$ such that $Q_i^{k+1}\subset Q_j^k\subset Q_\ell^{k-1}$. If $Q_i^{k+1}$ is a child of $Q_j^k$ with $i\in\mathscr{A}_{k+1}$, $j\in\mathscr{A}_k,$ and  $k\in\mathbb{Z}$, then $B(\zeta_i^{k+1},c_1r^{k+1})\subset Q_i^{k+1}$ and $Q_j^{k}\subset B(\zeta_j^{k},C_1r^{k})$. Note that there exists a positive integer $\mathscr{N}$ such that $C_1\leq 2^{\mathscr{N}}c_1r$. Then by \eqref{eq5} and the fact $B(\zeta_j^{k},C_1r^{k})\subset B(\zeta_i^{k+1},2C_1r^{k})$,
			\begin{align}\label{doub2}
				\sigma(Q_j^{k})\leq&\sigma(B(\zeta_j^{k},C_1r^{k}))\leq\sigma(B(\zeta_i^{k+1},2C_1r^{k}))\\
				\nonumber \leq&\sigma(B(\zeta_i^{k+1},2^{\mathscr{N}+1}c_1r^{k+1}))\lesssim\sigma(B(\zeta_i^{k+1},c_1r^{k+1}))\\
				\nonumber \leq&\sigma(Q_i^{k+1}).
			\end{align}
			\item [(ii)] It is well-known from \cite{HK} that there exist a constant $\rho>0$ and a finite collection of dyadic systems $\{\mathscr{D}_\tau: \tau=1,2,\cdots,\mathscr{K}\}$ so that for any nonisotropic metric ball $B(\zeta,r)\subset\SN$ there exists $Q\in\mathscr{D}_\tau$ for some $\tau\in\{1,2,\cdots,\mathscr{K}\}$ satisfying $B(\zeta,r)\subset Q$ and $\text{diam}(Q)\leq \rho r$. 
		\end{itemize}
	\end{remark}

	\subsection{Local mean oscillation formula}
	Let $Q$ be a subset of $\SN$. Lerner's local oscillation formula, which is also called sparse domination, involves the following concepts:
	\begin{itemize}
		\item [(i)] A median of a real-valued measurable function $f$ on $Q$ is any real number $m_f(Q)$ such that 
		$$\sigma(\{\zeta\in Q:f(\zeta)>m_f(Q)\})\leq\sigma(Q)/2, \ \ \sigma(\{\zeta\in Q:f(\zeta)<m_f(Q)\})\leq\sigma(Q)/2;$$
		\item [(ii)] The decreasing rearrangement of $f$ is defined by
		$$f^*(t)=\inf\{\alpha>0:\sigma(\{\zeta\in\SN:|f(\zeta)|>\alpha\})\leq t\}=\inf\{\|f\chi_{\SN\setminus E}\|_{L^\infty}:\sigma(E)\leq t\},$$
		where both infimums are actually attained respectively by letting $\alpha=f^*(t)$ and $$E=\{\zeta\in\SN:|f(\zeta)|>f^*(t)\};$$
		\item [(iii)] The oscillation of $f$ on $Q$ is
		$$\omega_{\epsilon}(f;Q)=\inf_c(\chi_Q(f-c))^*(\epsilon\sigma(Q)),$$
		where $0<\epsilon<1$.
	\end{itemize}
	We list here some basic properties of these objects and refer to \cite{CUMP,Hy,Ler2} for more details.
	\begin{itemize}
		\item[(i)] $|m_f(Q)|\leq (\chi_Qf)^*(\nu\sigma(Q))$, where $\nu\in(0,1/2)$;
		\item[(ii)] $f^*(t)\leq\frac{1}{t}\|f\|_{L^{1,\infty}}$, where $0<t<\infty$;
		\item[(iii)] $(\chi_Q(f-m_f(Q)))^*(\nu\sigma(Q))\leq2\omega_\nu(f;Q)$, where $\nu\in(0,1/2)$;
		\item[(iv)] if $|f|\leq|g|$ a.e., then $f^*(t)\leq g^*(t)$, where $0<t<\infty$;
		\item[(v)] $(f+g)^*(t)\leq f^*(t/2)+g^*(t/2)$, where $0<t<\infty$;
		\item[(vi)] $(fg)^*(t)\leq f^*(t/2)g^*(t/2)$, where $0<t<\infty$.
	\end{itemize}
	
	\begin{definition}
		Let $\mathscr{D}$ be a dyadic system. A collection of dyadic sets $S\subset\mathscr{D}$ is called a sparse family if for $Q\in S$, we have
		$$\sigma(Q)\leq 2\sigma(E(Q)),$$
		where $E(Q)=Q\setminus(\bigcup_{P\in S;P\subsetneqq Q}P)$.
	\end{definition}
	
	Lerner's local mean oscillation formula in the following form is from \cite{Hy}. Their proof is based on the Lebesgue differentiation theorem for the median \cite{F}. Although the arguments of \cite{F} and \cite{Hy} involve $\mathbb{R}^n$, they are also applicable on spaces of homogeneous type. For reader's convenience, we still provide the proof below. We denote by $\mathscr{D}(Q)$ the set of all dyadic cubes in $\mathscr{D}$ which are contained in $Q$.
	
	\begin{theorem}\label{LMO}
		Let $Q^0$ be a dyadic cube in a dyadic system $\mathscr{D}$ and let $f$ be a real-valued measurable function on $\SN$. Then there exists a (possibly empty) sparse family $S(Q^0)\subset \mathscr{D}(Q^0)$ such that for a.e. $\zeta\in Q^0$,
		$$|f(\zeta)-m_f(Q^0)|\leq 2\sum_{Q\in S(Q^0)}\omega_\epsilon(f;Q)\chi_{Q}(\zeta)$$
		for some $\epsilon\in(0,1)$.
	\end{theorem}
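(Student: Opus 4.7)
The plan is to construct $S(Q^0)$ via an iterative Calder\'on--Zygmund-type stopping-time procedure and then telescope medians across nested cubes. Fix small parameters $\epsilon,\lambda\in(0,1/2)$ to be calibrated with the dyadic doubling constant of $(\SN,d,\sigma)$. Set $S_0=\{Q^0\}$; given $Q\in S_k$, I would adjoin to $S_{k+1}$ the maximal dyadic subcubes $Q'\subsetneq Q$ (pairwise disjoint by maximality) satisfying
\[ \sigma(Q'\cap E_Q)>\lambda\sigma(Q'),\qquad E_Q:=\{z\in Q:|f(z)-m_f(Q)|>2\omega_\epsilon(f;Q)\},\]
and put $S(Q^0)=\bigcup_k S_k$. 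Sparsity then follows from property (iii) of the oscillation, which gives $\sigma(E_Q)\leq\epsilon\sigma(Q)$: the disjoint stopping children satisfy $\sum_{Q'}\sigma(Q')\leq\lambda^{-1}\sigma(E_Q)\leq(\epsilon/\lambda)\sigma(Q)$, so choosing $\epsilon\leq\lambda/2$ yields $\sigma(E(Q))\geq\sigma(Q)/2$.

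The first key estimate is the pointwise bound $|f(z)-m_f(Q)|\leq 2\omega_\epsilon(f;Q)$ for a.e.\ $z$ in the exit set $E(Q)=Q\setminus\bigcup_{Q'\in S(Q^0),\,Q'\subsetneq Q}Q'$. For such $z$, any dyadic $Q''\subsetneq Q$ containing $z$ cannot satisfy the stopping condition (else some ancestor of $Q''$ would have been selected as a stopping child containing $z$), so $\sigma(Q''\cap E_Q)\leq\lambda\sigma(Q'')$. The dyadic Lebesgue differentiation theorem on the space of homogeneous type $(\SN,d,\sigma)$ then forces the density of $E_Q$ at a.e.\ such $z$ to be at most $\lambda<1$, hence $z\notin E_Q$.

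For the telescoping, fix $\zeta\in Q^0$ in a set of full measure and let $Q^0=P_0\supsetneq P_1\supsetneq\cdots\supsetneq P_N$ be the chain of $S(Q^0)$-cubes containing $\zeta$, with $\zeta\in E(P_N)$. Write
\[ f(\zeta)-m_f(Q^0)=\bigl(f(\zeta)-m_f(P_N)\bigr)+\sum_{j=0}^{N-1}\bigl(m_f(P_{j+1})-m_f(P_j)\bigr);\]
the first term is controlled by $2\omega_\epsilon(f;P_N)$ via the exit-set estimate. For each median difference, property (i) applied to $f-m_f(P_j)$ on $P_{j+1}$ gives
\[ |m_f(P_{j+1})-m_f(P_j)|\leq\bigl(\chi_{P_{j+1}}(f-m_f(P_j))\bigr)^{\!*}(\sigma(P_{j+1})/2).\]
Since the dyadic parent $\widetilde P_{j+1}$ of $P_{j+1}$ was not selected as a stopping child, $\sigma(\widetilde P_{j+1}\cap E_{P_j})\leq\lambda\sigma(\widetilde P_{j+1})$; choosing $\lambda$ so that $\sigma(P_{j+1})/2\geq\lambda\sigma(\widetilde P_{j+1})$ (possible once $\lambda$ is small relative to the doubling constant) yields $|m_f(P_{j+1})-m_f(P_j)|\leq 2\omega_\epsilon(f;P_j)$. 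Summing gives the claimed inequality.

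The main obstacle is the median-telescoping step: it requires a delicate calibration of $\lambda$ against the dyadic doubling constant so that non-selection of parents forces consecutive medians to be close on the $\omega_\epsilon$ scale. A secondary technicality is the dyadic Lebesgue differentiation theorem on $(\SN,d,\sigma)$, which is standard for spaces of homogeneous type.
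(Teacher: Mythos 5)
Your proposal is correct in substance and follows a genuinely different route from the paper. The paper's stopping rule is a \emph{median-jump} criterion: $Q_j^{(1)}$ is selected when some child $Q'$ of $Q_j^{(1)}$ has $|m_f(Q')-m_f(Q^0)|$ exceeding the threshold $(\chi_{Q^0}(f-m_f(Q^0)))^*(\epsilon\sigma(Q^0))$. The pointwise control on the exit set then comes from Fujii's Lebesgue differentiation theorem \emph{for medians}, and the control on the median deltas comes directly from maximality. Your scheme instead uses a \emph{density} criterion: stop at the first cube $Q'$ where the level set $E_Q=\{|f-m_f(Q)|>2\omega_\epsilon(f;Q)\}$ occupies more than a $\lambda$-fraction. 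You then control the exit set via the ordinary Lebesgue differentiation theorem for indicator functions (density of $E_Q$ is $\leq\lambda<1$ forces $\chi_{E_Q}=0$ a.e.), and control the median deltas by propagating the non-stopping condition of the parent $\widetilde P_{j+1}$ through property (i) of the rearrangement. Both are bona fide Lerner-type arguments (yours is closer to Lerner's 2010 formulation, the paper's to Hyt\"onen's 2014 reformulation); neither is clearly simpler, but yours invokes only the standard dyadic differentiation theorem on the homogeneous space $(\SN,d,\sigma)$ rather than the slightly less familiar median version.

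One technical point you should tighten: property (i) as stated in the paper requires $\nu\in(0,1/2)$, strictly. So rather than applying it at $\nu=1/2$, fix some $\nu_0<1/2$ (say $\nu_0=1/3$) and calibrate $\lambda\leq\nu_0/ K$ where $K=C^{\mathscr{N}+1}$ is the dyadic doubling bound $\sigma(\widetilde P)\leq K\sigma(P)$ from \eqref{doub2}; then $\sigma(P_{j+1}\cap E_{P_j})\leq\lambda\sigma(\widetilde P_{j+1})\leq\nu_0\sigma(P_{j+1})$ and property (i) applies with margin. This does not change the statement. Also, to use the exit-set estimate in the telescoping you implicitly need the chain $(P_j)$ to terminate for a.e.\ $\zeta$; this follows from your sparsity bound $\sigma(\Omega^{m+1})\leq\tfrac12\sigma(\Omega^m)$, as in the paper, and deserves a sentence.
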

	
	\begin{proof}
		We can write the median Calder\'on-Zygmund decomposition
		\begin{align}\label{mCZ}
			\chi_{Q^0}(f-m_f(Q^0))
			=&\chi_{Q^0\setminus\cup Q_j^{(1)}}(f-m_f(Q^0))+\sum_j\chi_{Q_j^{(1)}}(m_f(Q_j^{(1)})-m_f(Q^0))\\
			\nonumber&+\sum_j\chi_{Q_j^{(1)}}(f-m_f(Q_j^{(1)})),
		\end{align}
		where $\{Q_j^{(1)}\}_j$ is a family of pairwise disjoint subcubes of $Q^0$ following a specific choice of the stopping cubes: they are the maximal dyadic subcubes of $Q^0$ such that
		\begin{align}\label{St}
			\max_{Q'\in ch(Q_j^{(1)})}|m_f(Q')-m_f(Q^0)|>(\chi_{Q^0}(f-m_f(Q^0)))^*(\epsilon\sigma(Q^0)).
		\end{align}
		Here, the notation $Q'\in ch(Q_j^{(1)})$ means that $Q'$ is one child of $Q_j^{(1)}$ and the constant $0<\epsilon<1/2$ will be chosen  later.
		
		By the choice of the stopping cubes $Q_j^{(1)}$, the converse estimate of \eqref{St} holds for all dyadic subset $Q'$ of $Q^0$ with $Q'\subset(Q^0\setminus\cup Q_j^{(1)})$. More precisely, for such $Q'$, we have
		$$|m_f(Q')-m_f(Q^0)|\leq(\chi_{Q^0}(f-m_f(Q^0)))^*(\epsilon\sigma(Q^0)).$$
		Then by the Lebesgue differentiation theorem for the median proved in \cite{F} ($m_f(Q')\to f(\zeta)$ as $Q'\to \zeta$ for almost every $\zeta$), the first term on the right-side of \eqref{mCZ} is dominated by
		$$\chi_{Q^0\setminus\cup Q_j^{(1)}}(\chi_{Q^0}(f-m_f(Q^0)))^*(\epsilon\sigma(Q^0))\leq\chi_{Q^0\setminus\cup Q_j^{(1)}}2\omega_{\epsilon}(f;Q^0).$$
		By the maximality in the choice of the stopping cubes $Q_j^{(1)}$, 
		$$|m_f(Q_j^{(1)})-m_f(Q^0)|\leq(\chi_{Q^0}(f-m_f(Q^0)))^*(\epsilon\sigma(Q^0)).$$
		Then, the second term on the right-side of \eqref{mCZ} is dominated by $\sum_j\chi_{Q_j^{(1)}}2\omega_{\epsilon}(f;Q^0)$.
		
		In summary, we have
		$$|\chi_{Q^0}(f-m_f(Q^0))|
		\leq\chi_{Q^0}2\omega_\epsilon(f;Q^0)+\sum_j|\chi_{Q_j^{(1)}}(f-m_f(Q_j^{(1)}))|.$$
		Repeat this stopping step, and by iteration, one has
		\begin{align*}
			&|\chi_{Q^0}(f-m_f(Q^0))|\\
			\leq&\chi_{Q^0}2\omega_\epsilon(f;Q^0)+\sum_j|\chi_{Q_j^{(1)}}(f-m_f(Q_j^{(1)}))|\\
			\leq&\chi_{Q^0}2\omega_\epsilon(f;Q^0)+\sum_j\chi_{Q_j^{(1)}}2\omega_\epsilon(f;Q_j^{(1)})+\sum_i|\chi_{Q_i^{(2)}}(f-m_f(Q_i^{(2)}))|\\
			\leq&\cdots\leq\sum_{k=0}^m\sum_j\chi_{Q_j^{(k)}}2\omega_\epsilon(f;Q_j^{(k)})+\sum_i|\chi_{Q_i^{(m+1)}}(f-m_f(Q_i^{(m+1)}))|,
		\end{align*}
		where the cubes $Q_i^{(m+1)}$ are dyadic subcubes of some $Q_j^{(m)}$, chosen by a similar stopping criterion as in the choice of $Q_j^{(1)}$ from $Q^0$ in \eqref{St}.
		
		Let $$E(Q_j^{(m)}):=Q_j^{(m)}\setminus\cup_i Q_i^{(m+1)}.$$
		We claim $\sigma(E(Q_j^{(m)}))\geq\frac{1}{2}\sigma(Q_j^{(m)})$, then
		$$\sigma(\Omega^{m+1}):=\sigma(\cup_i Q_i^{(m+1)})
		\leq\frac{1}{2}\sigma(\Omega^m)\leq\cdots\leq\frac{1}{2^{m+1}}\sigma(Q^0).$$
		As $m\to\infty$, the support of the term $\sum_i|\chi_{Q_i^{(m+1)}}(f-m_f(Q_i^{(m+1)}))|$ tends to a null set. Therefore,
		$$|\chi_{Q^0}(f-m_f(Q^0))|\leq\sum_{k=0}^\infty\sum_j\chi_{Q_j^{(k)}}2\omega_{\epsilon}(f;Q_j^{(k)}),$$
		which implies the desired result.
		
		Now we show $\sigma(E(Q_j^{(m)}))\geq\frac{1}{2}\sigma(Q_j^{(m)})$ to finish the proof. By symmetry, it suffices to consider $m=0$.
		The stopping condition yields that there exists some $Q'\in ch(Q_j^{(1)})$ such that
		$$|m_f(Q')-m_f(Q^0)|>(\chi_{Q^0}(f-m_f(Q^0)))^*(\epsilon\sigma(Q^0)).$$
		By \eqref{doub2}, there exists a constant $C\in(0,1/2)$ such that $C\sigma(Q_j^0)\leq\sigma(Q')$. Let $f_0=f-m_f(Q^0)$. We can choose $m_{f_0}(Q')=m_f(Q')-m_f(Q^0)$. Then for any $\nu\in(0,1/2)$, one has
		$$\alpha:=(\chi_{Q^0}f_0)^*(\epsilon\sigma(Q^0))<|m_{f_0}(Q')|\leq(\chi_{Q'}f_0)^*(\nu\sigma(Q'))
		\leq(\chi_{Q_j^1}f_0)^*(\nu C\sigma(Q_j^{(1)})).$$
		Thus, by the definition of $(\chi_{Q_j^1}f_0)^*(\nu C\sigma(Q_j^{(1)}))$,
		$$\sigma(Q_j^{(1)}\cap\{\zeta\in\SN:|f_0(\zeta)|>\alpha\})\geq \nu C\sigma(Q_j^{(1)}).$$
		We hence have
		\begin{align*}
			\nu C\sum_j\sigma(Q_j^{(1)})\leq&\sum_j\sigma(Q_j^{(1)}\cap\{\zeta\in\SN:|f_0(\zeta)|>\alpha\})\\
			\leq&\sigma(Q^0\cap\{\zeta\in\SN:|f_0(\zeta)|>\alpha\})\leq\epsilon\sigma(Q^0).
		\end{align*}
		By taking $\epsilon=C/4$ and letting $\nu\to\frac{1}{2}^-$, we have $\sum_j\sigma(Q_j^{(1)})\leq\frac{1}{2}\sigma(Q^0)$. This yields the desired inequality. 
	\end{proof}

	\bigskip

	\section{Bergman gradient on $\BN$}\label{bergra}
	In this section, we introduce the Bergman gradient on $\BN$. We will also give some useful inequalities involving the Lusin area integral associated with the Bergman gradient.
	
	The Bergman kernel on $\BN$ is given by $$K(z,w)=\frac{1}{(1-\langle z,w\rangle)^{n+1}}.$$
	Let
	$$g_{i,j}(z)=\frac{\partial^2}{\partial z_i\partial\overline{z}_j}
	\log K(z,z)=\frac{n+1}{(1-|z|^2)^2}[(1-|z|^2)\delta_{i,j}+\overline{z}_iz_j],$$
	where $\delta_{i,j}$ is the Kronecker delta,  which is equal to one when $i=j$ and equal to zero when $i\neq j$. For $z\in\BN$ and $\zeta\in\mathbb{C}^n$, set
	$$\beta_{\BN}^2(z,\zeta)=\sum_{i,j=1}^{n}g_{i,j}(z)\zeta_i\overline{\zeta}_j.$$
	If $\gamma:[0,1]\to\BN$ is a $\mathcal{C}^1$ curve, the Bergman length of $\gamma$, denoted by $|\gamma|_{\BN}$, is given by
	$$|\gamma|_{\BN}=\int_{0}^{1}\beta_{\BN}(\gamma(t),\gamma'(t))dt.$$
	For $z,w\in\BN$, the Bergman metric is defined by
	$$\delta_{\BN}(z,w)=\inf\{|\gamma|_{\BN}: \gamma(0)=z,\gamma(1)=w\},$$
	where the infimum is taken over all $\mathcal{C}^1$ curves from $z$ to $w$. The vector fields
	$$\left\{\frac{\partial}{\partial z_1},\frac{\partial}{\partial \overline{z}_1},\cdots,\frac{\partial}{\partial z_n},\frac{\partial}{\partial \overline{z}_n}\right\}$$
	form a basis on the complex tangent space. The Hermitian inner product with respect to the Bergman metric on the complexification of the tangent space at a point $z\in\BN$ is defined by
	$$\left\langle\frac{\partial}{\partial z_i},\frac{\partial}{\partial z_j}\right\rangle_B
	=\left\langle\frac{\partial}{\partial \overline{z}_j},\frac{\partial}{\partial \overline{z}_i}\right\rangle_B
	=\frac{1}{2}g_{i,j}(z)$$
	and
	$$\left\langle\frac{\partial}{\partial z_i},\frac{\partial}{\partial \overline{z}_j}\right\rangle_B=\left\langle\frac{\partial}{\partial \overline{z}_i},\frac{\partial}{\partial z_j}\right\rangle_B=0.$$
	For $u\in \mathcal{C}^1(\BN)$, the gradient with respect to the Bergman metric is given by $$\widetilde{\bigtriangledown}u=2\sum_{i, j}g^{i,j}\left(\frac{\partial u}{\partial \overline{z}_i}\frac{\partial}{\partial z_j}+\frac{\partial u}{\partial z_j}\frac{\partial}{\partial\overline{z}_i}\right),$$
	where $(g^{i,j})_{i,j}$ is the inverse matrix of $(g_{i,j})_{i,j}$ and
	$$g^{i,j}(z)=\frac{1-|z|^2}{n+1}(\delta_{i,j}-\overline{z}_iz_j).$$
	In particular, for $u\in \mathcal{C}^1(\BN)$,
	$$|\widetilde{\bigtriangledown}u|^2=\langle \widetilde{\bigtriangledown}u,\widetilde{\bigtriangledown}u\rangle_B
	=(\widetilde{\bigtriangledown}u)\overline{u}=2\sum_{i,j}g^{i,j}\left(\frac{\partial u}{\partial\overline{z}_i}\overline{\frac{\partial u}{\partial\overline{z}_j}}+\overline{\frac{\partial u}{\partial z_i}}\frac{\partial u}{\partial z_j}\right).$$
	For more details, see \cite{ABC,BBG,R,Sto}.
	
	Denote by $Y\in\{T_{i,j},\overline{T}_{i,j}: 1\leq i<j\leq n\}$ one of the following complex tangential differential operators:
	\begin{align}\label{Comtan}
		T_{i,j}u(z)=\overline{z}_i\frac{\partial u}{\partial z_j}(z)-\overline{z}_j\frac{\partial u}{\partial z_i}(z) \ \ \text{and} \ \ \overline{T}_{i,j}u(z)=z_i\frac{\partial u}{\partial \overline{z}_j}(z)-z_j\frac{\partial u}{\partial \overline{z}_i}(z),
	\end{align}
	where $u\in\mathcal{C}^1(\BN)$ and $z\in\BN$. For $u\in\mathcal{C}^1(\BN)$, note that
	$$|z|^2\sum_{i=1}^{n}\left|\frac{\partial u}{\partial z_i}(z)\right|^2=|Ru(z)|^2+\sum_{1\leq i<j\leq n}|T_{i,j}u(z)|^2,$$
	$$|z|^2\sum_{i=1}^{n}\left|\frac{\partial u}{\partial \overline{z}_i}(z)\right|^2=|\overline{R}u(z)|^2+\sum_{1\leq i<j\leq n}|\overline{T}_{i,j}u(z)|^2,$$
	and
	\begin{align}\label{gra5}
		|\widetilde{\bigtriangledown}u(z)|^2=\frac{2(1-|z|^2)}{n+1}\left(\sum_{i=1}^{n}\left|\frac{\partial u}{\partial z_i}(z)\right|^2-|Ru(z)|^2+\sum_{i=1}^{n}\left|\frac{\partial u}{\partial \overline{z}_i}(z)\right|^2-|\overline{R}u(z)|^2\right),
	\end{align}
	then one has
	\begin{align}\label{gra}
		|\widetilde{\bigtriangledown}u(z)|^2=\frac{2(1-|z|^2)}{(n+1)|z|^2}\left[(1-|z|^2)\sum_X|Xu(z)|^2
		+\sum_Y|Yu(z)|^2\right].
	\end{align}

	\begin{remark}
		From \eqref{gra}, the following pointwise estimate holds:
		\begin{align}\label{gra2}
			S_{\alpha}^X(u)(\zeta)\lesssim S_{\alpha}^{\widetilde{\bigtriangledown}}(u)(\zeta), \quad \ \forall  \zeta\in\SN.
		\end{align}
	\end{remark}
	
	Let us recall that $u\in\mathcal{C}^2(\BN)$ is said to be M-harmonic if $\widetilde{\bigtriangleup}u=0$, where $\widetilde{\bigtriangleup}$ is the Laplace-Beltrami operator given by $$\widetilde{\bigtriangleup}=\frac{4(1-|z|^2)}{n+1}\sum_{i,j=1}^{n}(\delta_{i,j}-\overline{z}_iz_j)\frac{\partial^2}{\partial z_j\partial\overline{z}_i}.$$
	The operator $\widetilde{\bigtriangleup}$ is invariant under the action of the automorphism group of $\BN$. Note that $P[f]$ is M-harmonic on $\BN$ for any $f\in L^1$ (\cite[P. 51]{Sto}). By the same argument as in \eqref{h}, we see $L^p_\omega\subset L^1$ for $1<p<\infty$ and $\omega\in A_p$. Therefore $P[f]$ is M-harmonic in $\BN$ for any $f\in L^p_\omega$. We refer to \cite{Sto} for more information about M-harmonic functions.

	Recall that the invariant Green function (\cite{Sto}) of $\BN$ is given by
	$$G(z)=\frac{1}{2n}\int_{|z|}^{1}(1-t^2)^{n-1}t^{-2n+1}dt$$
	and in the case $n=1$, this reduces to the usual logarithm, that is $G(z)=\frac{1}{2}\log\frac{1}{|z|}$. 
	
	\begin{proposition}\label{Gdelta}
		Let $n\geq2$. There exists a constant $\delta_G\in\left(0,\frac{1}{2}\right)$ such that if $|z|\leq\delta_G$, then 
		$$ G(z)\lesssim\frac{1}{|z|^{2n-2}}. $$
	\end{proposition}
	
	\begin{proof}
		The result can be obtained by the fact that if $|z|\to0^+$ then $G(z)\asymp\frac{1}{|z|^{2n-2}}$, where $n\geq2$. See \cite[Proposition 1.26]{Z} for more details.
	\end{proof}
	
	Let $u\in\mathcal{C}^1(\BN)$. Denote by $\bigtriangledown u(z)=\left(\frac{\partial u}{\partial z_1},\frac{\partial u}{\partial z_2},\cdots,\frac{\partial u}{\partial z_n}\right)$ the complex gradient of $u$. By \eqref{gra5} and the fact that $\frac{\partial u}{\partial \overline{z}_i}=\overline{\frac{\partial \overline{u}}{\partial z_i}}$, we deduce that
	$$|\widetilde{\bigtriangledown} u(z)|^2=\frac{2(1-|z|^2)}{n+1}
	(|\bigtriangledown u(z)|^2-|Ru(z)|^2
	+|\bigtriangledown \overline{u}(z)|^2
	-|R\overline{u}(z)|^2).$$
	Together with $|Ru(z)|\leq|z||\bigtriangledown u(z)|,$ we deduce that
	\begin{align}\label{gra3}
		\begin{split}
			(1-|z|^2)^2(|\bigtriangledown u(z)|^2+|\bigtriangledown \overline{u}(z)|^2)
			\lesssim|\widetilde{\bigtriangledown} u(z)|^2
			\lesssim|\bigtriangledown u(z)|^2+|\bigtriangledown \overline{u}(z)|^2.
		\end{split}
	\end{align}
	For $0<r<1$, set
	$$r\BN=\{z\in\mathbb{C}^n: |z|<r\} \quad\quad \text{and} \quad\quad r\SN=\{z\in\mathbb{C}^n: |z|=r\}.$$
	
 The following pointwise estimate will be used later.
	
	\begin{lemma}\label{Gdelta2}
		Let $1<\beta<\alpha$ and $\delta_G$ be the constant as in Proposition \ref{Gdelta}. For any $f\in L^1$, the following inequality holds:
		$$|\widetilde{\bigtriangledown}P[f](z)|\lesssim S_\alpha^{\widetilde{\bigtriangledown}}(P[f])(\zeta),\quad\forall~z\in D_\beta(\zeta)\cap\delta_G\BN,~\zeta\in\SN.$$
	\end{lemma}
	
	\begin{proof}
		By \eqref{gra3}, 
		\begin{align}\label{gra4}
			|\widetilde{\bigtriangledown} P[f](z)|\asymp|\bigtriangledown P[f](z)|+|{\bigtriangledown} P[\overline{f}](z)|,\quad\forall~|z|<\delta_G.
		\end{align}
		Let $\beta\in(1,\alpha)$. Let $E_\epsilon(z)=\varphi_z(\epsilon \BN)$, where $0<\epsilon<1$, $z\in\BN$ and $\varphi_z$ is an automorphism of $\BN$ that interchanges the points $0$ and $z$ (see \cite[Chapter 1]{Z} for more details). Choose $\epsilon$ sufficiently small so that $E_\epsilon(z)\subset D_\alpha(\zeta)$ for any $z\in D_\beta(\zeta)$. Note that $P[f]$ is M-harmonic. By \cite[Lemma 2.7]{ABC}, we get
		$$|\bigtriangledown P[f](z)|^2\lesssim\frac{1}{(1-|z|^2)^{n+1}}\int_{E_\epsilon(z)}|\bigtriangledown P[f](w)|^2 dv(w)$$
		and
		$$|{\bigtriangledown} P[\overline{f}](z)|^2\lesssim\frac{1}{(1-|z|^2)^{n+1}}\int_{E_\epsilon(z)}|{\bigtriangledown} P[\overline{f}](w)|^2 dv(w).$$
		Combining with \eqref{gra4} and the fact that $1\asymp1-|z|^2\asymp1-|w|^2$ for any $w\in E_\epsilon(z)$ with $|z|\leq\delta_G$, we have that for any $|z|\leq\delta_G$,
		\begin{align*}
			|\widetilde{\bigtriangledown} P[f](z)|^2
			\lesssim&\int_{E_\epsilon(z)}|\bigtriangledown P[f](w)|^2+|{\bigtriangledown} P[\overline{f}](w)|^2\frac{dv(w)}{(1-|w|^2)^{n+1}}\\
			=&\int_{E_\epsilon(z)}(1-|w|^2)^2|\bigtriangledown P[f](w)|^2+(1-|w|^2)^2|{\bigtriangledown} P[\overline{f}](w)|^2\frac{dv(w)}{(1-|w|^2)^{n+1}}\\
			\lesssim&\int_{D_\alpha(\zeta)}|\widetilde{\bigtriangledown} P[f](w)|^2 \frac{dv(w)}{(1-|w|^2)^{n+1}},
		\end{align*}
		as desired.
	\end{proof}
	
	Let $$d\lambda_n(z)=\frac{(n+1)dv(z)}{(1-|z|^2)^{n+1}} \quad\quad \text{and} \quad\quad d\widetilde{\sigma}(r\zeta)=\frac{2nr^{2n-1}d\sigma(\zeta)}{(1-r^2)^n}.$$
	It is well-known that Green's formula for $\widetilde{\bigtriangleup}$ on $\BN$ itself with $n>1$ does not hold because $\BN$ equipped with the Bergman metric is a noncompact manifold. But it holds on every compact subset of $\BN$ (see \cite[Theorem 1.25]{Z}). For $u\in\mathcal{C}^2(\BN)$, Zheng \cite[P. 17]{Zheng} proved that
	\begin{align}\label{Greenn}
		\int_{r\BN}\widetilde{\bigtriangleup}(u)(z)(G(z)-G(r))d\lambda_n(z)
		=&-\int_{r\SN}u(z)\frac{\partial G}{\partial\widetilde{n}_r}(z)d\widetilde{\sigma}(z)-u(0)\\
		\nonumber =&\int_{\SN}u(r\zeta)d\sigma(\zeta)-u(0),
	\end{align}
	where $\partial\widetilde{n}_r$ is the outward normal derivative along $r\SN$, and in the last equality we use the following identity (see \cite[P. 30]{Z})
	$$-\frac{\partial G}{\partial\widetilde{n}_r}(r\zeta)d\widetilde{\sigma}(r\zeta)=d\sigma(\zeta).$$

	The following lemma is very useful in the proof of Theorem \ref{Main1} and Theorem \ref{Main2}.

	\begin{lemma}\label{Grlem}
		Let $\alpha>1$. Assume $f,g\in L^1$ and $P[f](0)P[g](0)=0$. Then, for any $0<r<1$,
		\begin{align}\label{Green9}
			\left|\int_{\SN}P[f](r\zeta)P[h](r\zeta)d\sigma(\zeta)\right|\lesssim\int_{\SN}S_\alpha^{\widetilde{\bigtriangledown}}(P[f])(\zeta)
			S_\alpha^{\widetilde{\bigtriangledown}}(P[\overline{h}])(\zeta)d\sigma(\zeta).
		\end{align}
	\end{lemma}

	\begin{proof}
		Choose $\beta\in(1,\alpha)$. Note that the function $P[f]P[h]$ is of class $\mathcal{C}^2(\BN)$. Let $0<r<1$. Applying \eqref{Greenn}, we have
		\begin{align}\label{Green8}
			\begin{split}
				\left|\int_{\SN}P[f](r\zeta)P[h](r\zeta)d\sigma(\zeta)\right|
				\leq&\int_{r\BN}|\widetilde{\bigtriangleup}(P[f]P[h])(z)|(G(z)-G(r))d\lambda_n(z)\\
				\leq&\int_{\BN}|\widetilde{\bigtriangleup}(P[f]P[h])(z)|G(z)d\lambda_n(z).
			\end{split}
		\end{align}
		This implies from \eqref{eq10} that the last term above is comparable with
		\begin{align}\label{Green2}
			\int_{\SN}\int_{D_\beta(\zeta)}|\widetilde{\bigtriangleup}(P[f]P[h])(z)|G(z)\frac{d\lambda_n(z)}{(1-|z|^2)^n}d\sigma(\zeta).
		\end{align}
		Note $\widetilde{\bigtriangleup}P[f](z)=\widetilde{\bigtriangleup}P[h](z)=0$ (\cite[P. 51]{Sto}). Then (see \cite[(3.11) and P. 29]{Sto}) 
		\begin{align*}
			\widetilde{\bigtriangleup}(P[f]P[h])=&P[h]\widetilde{\bigtriangleup}P[f]+2(\widetilde{\bigtriangledown}P[f])(P[h])
			+P[f]\widetilde{\bigtriangleup}P[h]\\
			=&2\langle\widetilde{\bigtriangledown}P[f],\widetilde{\bigtriangledown}P[\overline{h}]\rangle_B,
		\end{align*}
		which implies 
		\begin{align}\label{Green3}
			|\widetilde{\bigtriangleup}(P[f]P[h])(z)|\leq2|\widetilde{\bigtriangledown}P[f](z)||\widetilde{\bigtriangledown}P[\overline{h}](z)|.
		\end{align}
		
		Let $\delta_G$ be the constant as in Proposition \ref{Gdelta}. For $z\in D_\beta(\zeta)\cap(\BN\setminus\delta_G\BN)$, it follows from \cite[Lemma 6.6]{Sto} that $G(z)\lesssim(1-|z|^2)^n$. Then by \eqref{Green3}, we have
		\begin{align}\label{Green4}
			\begin{split}
				&\int_{\SN}\int_{D_\beta(\zeta)\cap(\BN\setminus\delta_G\BN)}|\widetilde{\bigtriangleup}(P[f]P[h])(z)|
				G(z)\frac{d\lambda_n(z)}{(1-|z|^2)^n}d\sigma(\zeta)\\
				\lesssim&\int_{\SN}\int_{D_\beta(\zeta)}|\widetilde{\bigtriangledown}P[f](z)|
				|\widetilde{\bigtriangledown}P[\overline{h}](z)|\frac{dv(z)}{(1-|z|^2)^{n+1}}d\sigma(\zeta)\\
				\leq&\int_{\SN}S_\alpha^{\widetilde{\bigtriangledown}}(P[f])(\zeta)
				S_\alpha^{\widetilde{\bigtriangledown}}(P[\overline{h}])(\zeta)d\sigma(\zeta),
			\end{split}
		\end{align}
		where in the last inequality we use H\"older's inequality.

		By \eqref{Green3} again and the fact that $1\asymp1-|z|^2$ for any $|z|<\delta_G$,
		\begin{align}\label{Green5}
			\begin{split}
				&\int_{\SN}\int_{D_\beta(\zeta)\cap\delta_G\BN}|\widetilde{\bigtriangleup}(P[f]P[h])(z)|
				G(z)\frac{d\lambda_n(z)}{(1-|z|^2)^n}d\sigma(\zeta)\\
				\lesssim&\int_{\SN}\int_{D_\beta(\zeta)\cap\delta_G\BN}|\widetilde{\bigtriangledown}P[f](z)|
				|\widetilde{\bigtriangledown}P[\overline{h}](z)|G(z)dv(z)d\sigma(\zeta)
				=:V.
			\end{split}
		\end{align}
		By Lemma \ref{Gdelta2},
		\begin{align}\label{Green6}
			\begin{split}
				V\lesssim&\int_{\SN}\int_{D_\beta(\zeta)\cap\delta_G\BN}G(z)dv(z)S_\alpha^{\widetilde{\bigtriangledown}}(P[f])(\zeta)
				S_\alpha^{\widetilde{\bigtriangledown}}(P[\overline{h}])(\zeta)d\sigma(\zeta)\\
				\lesssim&\int_{\SN}S_\alpha^{\widetilde{\bigtriangledown}}(P[f])(\zeta)
				S_\alpha^{\widetilde{\bigtriangledown}}(P[\overline{h}])(\zeta)d\sigma(\zeta),
			\end{split}
		\end{align}
		where in the last inequality we use Proposition \ref{Gdelta} if $n\geq2$. In fact,
		$$\int_{\delta\BN}\frac{1}{|z|^{2n-2}}dv(z)\lesssim1.$$
		If $n=1$, then $G(z)=\frac{1}{2}\log\frac{1}{|z|}\leq\frac{1-|z|}{2|z|}$. Then
		$$\int_{\delta\mathbb{B}_1}G(z)dv(z)\lesssim\int_{0}^{1}(1-t)dt\lesssim1.$$
		By \eqref{Green5} and \eqref{Green6}, we have
		\begin{align}\label{Green7}
			\begin{split}
				&\int_{\SN}\int_{D_\beta(\zeta)\cap\delta_G\BN}|\widetilde{\bigtriangleup}(P[f]P[h])(z)|
				G(z)\frac{d\lambda_n(z)}{(1-|z|^2)^n}d\sigma(\zeta)\\
				\lesssim&\int_{\SN}S_\alpha^{\widetilde{\bigtriangledown}}(P[f])(\zeta)
				S_\alpha^{\widetilde{\bigtriangledown}}(P[\overline{h}])(\zeta)d\sigma(\zeta).
			\end{split}
		\end{align}
		Finally, by \eqref{Green8}, \eqref{Green4} and \eqref{Green7}, we get \eqref{Green9}. This completes the proof.
	\end{proof}

By \eqref{gra2}, one has that for $\alpha>1/2$, $f\in L^1$ and $\zeta\in\SN$,
$$ \sum_{X\in\{R,\overline{R}\}}S_{\alpha}^{X}(P[f])(\zeta)\lesssim S_{\alpha}^{\widetilde{\bigtriangledown}}(P[f])(\zeta). $$
 The following proposition indicates that the inverse inequality also holds.

\begin{proposition}\label{dim2}
	Let $1/2<\beta<\alpha$. 
	For $f\in L^1$, if $\sum\limits_{X\in\{R,\overline{R}\}}S_{\alpha}^{X}(P[f])(\zeta)<\infty$, then
	\begin{align}\label{hol}
		S_{\beta}^{\widetilde{\bigtriangledown}}(P[f])(\zeta)\lesssim \sum_{X\in\{R,\overline{R}\}}S_{\alpha}^{X}(P[f])(\zeta).
	\end{align}
\end{proposition}

\begin{proof}
	Let $\delta_G$ be the constant as in Proposition \ref{Gdelta}. We first show
	\begin{align}\label{hgra1}
		\left(\int_{D_\beta(\zeta)\cap(\BN\setminus\delta_G\BN)}
		\left|\widetilde{\bigtriangledown} P[f](z)\right|^2\frac{dv(z)}{(1-|z|^2)^{n+1}}\right)^{1/2}\lesssim \sum_{X\in\{R,\overline{R}\}}S_{\alpha}^{X}(P[f])(\zeta).
	\end{align}
	By \cite[Lemma 2.1]{JP} and as argued in \cite[Lemma 3.5]{AB}, one can show that
	$$\sum_{Y\in\{T_{i,j}: 1\leq i<j\leq n\}}S_{\beta}^Y(P[f])(\zeta)\lesssim \sum_{X\in\{R,\overline{R}\}}S_{\alpha}^X(P[f])(\zeta),\quad\forall\zeta\in\SN,$$
	where $Y\in\{T_{i,j}: 1\leq i<j\leq n\}$ is defined in \eqref{Comtan} and
	$$S_{\beta}^Y(P[f])(\zeta)=\left(\int_{D_\beta(\zeta)}
	\left|YP[f](z)\right|^2\frac{dv(z)}{(1-|z|^2)^{n}}\right)^{1/2}.$$
	Note that for $z\in D_\beta(\zeta)\cap(\BN\setminus\delta_G\BN)$, $|z|\geq\delta_G$. Hence, by \eqref{gra}, we get \eqref{hgra1}.
	
	Next, we show
	\begin{align}\label{hgra2}
		\left(\int_{D_\beta(\zeta)\cap\delta_G\BN}
		\left|\widetilde{\bigtriangledown} P[f](z)\right|^2\frac{dv(z)}{(1-|z|^2)^{n+1}}\right)^{1/2}\lesssim \sum_{X\in\{R,\overline{R}\}}S_{\alpha}^{X}(P[f])(\zeta).
	\end{align}
	By \eqref{gra3},
	$$|\widetilde{\bigtriangledown} P[f](z)|^2\asymp|\bigtriangledown P[f](z)|^2+|\bigtriangledown P[\overline{f}](z)|^2,\quad\forall z\in D_\beta(\zeta)\cap\delta_G\BN.$$
	Combining with $1-|z|^2\asymp1$ for $z\in D_\beta(\zeta)\cap\delta_G\BN$, we have
	\begin{align}\label{hgra4}
		\begin{split}
			&\left(\int_{D_\beta(\zeta)\cap\delta_G\BN}
			\left|\widetilde{\bigtriangledown} P[f](z)\right|^2\frac{dv(z)}{(1-|z|^2)^{n+1}}\right)^2\\
			\asymp&\left(\int_{D_\beta(\zeta)\cap\delta_G\BN}
			\left|\bigtriangledown P[f](z)\right|^2+\left|\bigtriangledown P[\overline{f}](z)\right|^2\frac{dv(z)}{(1-|z|^2)^{n-1}}\right)^2.
		\end{split}
	\end{align}
	Set
	$$S_\alpha(r,\zeta):=\left\{w\in D_\alpha(\zeta):\frac{1-r^2}{2}<1-|w|^2<2(1-r^2)\right\},\quad\forall\zeta\in\SN,~0<r<1.$$
	Let $E_\epsilon(z)=\varphi_z(\epsilon \BN)$, where $z\in\BN$ and $\varphi_z$ is an automorphism of $\BN$ that interchanges the points $0$ and $z$. Choose $\epsilon$ sufficiently small so that $E_\epsilon(z)\subset S_\alpha(r,\zeta)$ for any $z=r\eta\in D_\beta(\zeta)\cap\delta_G\BN$, $\eta\in\SN$ and $0<r<1$. In the same way as in \cite[Lemma 2.3 and Lemma 2.6]{BBG},
	\begin{align*}
		|\widetilde{\bigtriangledown}[\re (RP[f])](r\eta)|+|\widetilde{\bigtriangledown}[\im (RP[f])](r\eta)|
		\lesssim\left(\int_{E_\epsilon(r\eta)}|RP[f](w)|^2\frac{dv(w)}{(1-|w|^2)^{n+1}}\right)^{1/2}
	\end{align*}
	and
	\begin{align*}
		|\widetilde{\bigtriangledown}[\re (RP[\overline{f}])](r\eta)|+|\widetilde{\bigtriangledown}[\im (RP[\overline{f}])](r\eta)|
		\lesssim\left(\int_{E_\epsilon(r\eta)}|RP[\overline{f}](w)|^2\frac{dv(w)}{(1-|w|^2)^{n+1}}\right)^{1/2}.
	\end{align*}
	Here $\re h$ and $\im h$ denote the real part and imaginary part of $h$, respectively. It follows from \eqref{gra3} that for any $z=r\eta\in D_\beta(\zeta)\cap\delta_G\BN$,
	\begin{align*}
		&|\bigtriangledown[\re (RP[f])](r\eta)|
		+|\bigtriangledown[\im (RP[f])](r\eta)|+|\bigtriangledown[\re (RP[\overline{f}])](r\eta)|
		+|\bigtriangledown[\im (RP[\overline{f}])](r\eta)|\\
		\lesssim&\left(\sum_{X\in\{R,\overline{R}\}}\int_{S_\alpha(r,\zeta)}|XP[f](w)|^2\frac{dv(w)}{(1-|w|^2)^{n+1}}\right)^{1/2}=:\mathcal{J}_r.
	\end{align*}
	Together with
	$$P[f](z)-P[f](0)=\sum_{X\in\{R,\overline{R}\}}\int_{0}^{1}\frac{XP[f](tz)}{t}dt,$$
	we deduce that
	$$|\bigtriangledown P[f](r\eta)|^2\lesssim\left(\frac{1}{r}\int_{0}^{r}\mathcal{J}_tdt\right)^2\leq\frac{1}{r}\int_{0}^{r}\mathcal{J}_t^2dt.$$
	Integrating in polar coordinates in $D_\beta(\zeta)$, and using the fact that
	$$\sigma(\{\eta\in\SN:r\eta\in D_\beta(\zeta)\})\lesssim(1-r)^n,$$
	we deduce that
	\begin{align*}
		\int_{D_\beta(\zeta)\cap\delta_G\BN}
		\left|\bigtriangledown P[f](z)\right|^2\frac{dv(z)}{(1-|z|^2)^{n-1}}
		\lesssim&\int_{0}^{1}(1-r^2)\int_{0}^{r}\mathcal{J}_t^2dtdr\\
		\lesssim&\int_{0}^{1}(1-r^2)^2\mathcal{J}_r^2dr,
	\end{align*}
	where in the last inequality we use Hardy's inequality (\cite[Lemma 3.4]{AB}). Hence, we obtain
	\begin{align*}
		\int_{D_\beta(\zeta)\cap\delta_G\BN}
		\left|\bigtriangledown P[f](z)\right|^2\frac{dv(z)}{(1-|z|^2)^{n-1}}
		\lesssim&\sum_{X\in\{R,\overline{R}\}}\int_{0}^{1}(1-r^2)^2\int_{S_\beta(r,\zeta)}|XP[f](w)|^2\frac{dv(w)}{(1-|w|^2)^{n+1}}dr\\
		\asymp&\sum_{X\in\{R,\overline{R}\}}\int_{0}^{1}\int_{S_\beta(r,\zeta)}|XP[f](w)|^2\frac{dv(w)}{(1-|w|^2)^{n-1}}dr\\
		\lesssim&\left(\sum_{X\in\{R,\overline{R}\}}S_{\alpha}^{X}(P[f])(\zeta)\right)^2.
	\end{align*}
	By similar arguments,
	\begin{align*}
		\int_{D_\beta(\zeta)\cap\delta_G\BN}
		\left|\bigtriangledown P[\overline{f}](z)\right|^2\frac{dv(z)}{(1-|z|^2)^{n-1}}
		\lesssim\left(\sum_{X\in\{R,\overline{R}\}}S_{\alpha}^{X}(P[\overline{f}])(\zeta)\right)^2
		=\left(\sum_{X\in\{R,\overline{R}\}}S_{\alpha}^{X}(P[f])(\zeta)\right)^2.
	\end{align*}
	We hence have from \eqref{hgra4} that
	\begin{align*}
		\int_{D_\beta(\zeta)\cap\delta_G\BN}
		\left|\widetilde{\bigtriangledown} P[f](z)\right|^2\frac{dv(z)}{(1-|z|^2)^{n+1}}
		\lesssim\left(\sum_{X\in\{R,\overline{R}\}}S_{\alpha}^{X}(P[f])(\zeta)\right)^2,
	\end{align*}
	which implies \eqref{hgra2}. Finally, by \eqref{hgra1} and \eqref{hgra2}, we complete the proof of \eqref{hol}.
\end{proof}

	\bigskip
	
	\section{Weak $(1,1)$ estimate of $\g^*_{\lambda}$ in the unweighted setting}\label{weaktype}
	In this section, we establish the weak type $(1,1)$ estimate of $\g^*_{\lambda}$ associated with Poisson integral in the unweighted setting for the purpose of proving Lemma \ref{Rd1} and Lemma \ref{Rd3}. Recall that $\g^*_\lambda\in\{\g^*_{X,\lambda},\g^*_{\mu,\lambda}\}$ and $X\in\{R,\overline{R}\}$, where $R$ and $\overline{R}$ are given by
	$$R=\sum_{i=1}^{n}z_i\frac{\partial}{\partial z_i},  \ \ \overline{R}=\sum_{i=1}^{n}\overline{z}_i\frac{\partial}{\partial \overline{z}_i},$$
	and $\mu$ is a Carleson measure on $\BN$.
	
	\begin{theorem}\label{Weak7}
		Let $\lambda\in\mathbb{N}$ with $\lambda\geq4$. Then for any $f\in L^1$,
		$$\|\g^*_{\lambda}(P[f])\|_{L^{1,\infty}}\lesssim\|f\|_{L^1}.$$
	\end{theorem}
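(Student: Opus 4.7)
The strategy I have in mind is a vector-valued Calder\'on--Zygmund argument on the space of homogeneous type $(\SN,d,\sigma)$, with the ``vector'' being the $L^2$ inside the definition of $\g^*_\lambda$. First I would establish the strong $(2,2)$ bound $\|\g^*_\lambda(P[f])\|_{L^2}\lesssim\|f\|_{L^2}$: swapping the order of integration between $\zeta$ and $z$ in the definition of $\g^*_\lambda$ and applying the Forelli--Rudin estimate $\int_\SN|1-\langle z,\zeta\rangle|^{-\lambda n}d\sigma(\zeta)\asymp(1-|z|^2)^{n-\lambda n}$ (which is legitimate since $\lambda\ge 4>1$) collapses the weight in $\zeta$ into a clean power of $(1-|z|^2)$. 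In the $X$-case this reduces to the classical Littlewood--Paley bound $\int_\BN|XP[f](z)|^2(1-|z|^2)\,dv(z)\lesssim\|f\|_{L^2}^2$, while in the $\mu$-case it reduces to the Carleson embedding $\int_\BN|P[f](z)|^2\,d\mu(z)\lesssim\|f\|_{L^2}^2$.

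Given $\alpha>0$, I would then run the Calder\'on--Zygmund decomposition of $f$ on $\SN$ at level $\alpha$, writing $f=\g+\sum_j b_j$ with $\|\g\|_{L^\infty}\lesssim\alpha$, $\|\g\|_{L^1}\le\|f\|_{L^1}$, each $b_j$ supported in a nonisotropic ball $B_j=B(\zeta_j,r_j)$, $\int b_j=0$, $\|b_j\|_{L^1}\lesssim\alpha\sigma(B_j)$, and $\sum_j\sigma(B_j)\lesssim\|f\|_{L^1}/\alpha$. The good part is handled by Chebyshev and the $L^2$ bound: $\sigma(\{\g^*_\lambda(P[\g])>\alpha/2\})\lesssim\alpha^{-2}\|\g\|_{L^2}^2\le\alpha^{-2}\|\g\|_{L^\infty}\|\g\|_{L^1}\lesssim\alpha^{-1}\|f\|_{L^1}$. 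For the bad part, set $E=\bigcup_j 2B_j$, which satisfies $\sigma(E)\lesssim\|f\|_{L^1}/\alpha$. Minkowski applied inside the $L^2$ defining $\g^*_\lambda$ yields $\g^*_\lambda(P[\sum_j b_j])\le\sum_j\g^*_\lambda(P[b_j])$, so after Chebyshev it suffices to prove, for every $j$,
\begin{equation*}
\int_{\SN\setminus 2B_j}\g^*_\lambda(P[b_j])(\zeta)\,d\sigma(\zeta)\lesssim\|b_j\|_{L^1}.
\end{equation*}
Exploiting $\int b_j=0$, I would write $XP[b_j](z)=\int_\SN(XP(z,\eta)-XP(z,\zeta_j))b_j(\eta)\,d\sigma(\eta)$, and analogously with $P$ replacing $XP$ in the $\mu$-case; applying Minkowski once more in the $z$-variable reduces the whole matter to the uniform-in-$\eta\in B_j$ kernel estimate
\begin{equation*}
\int_{\SN\setminus 2B_j}\left(\int_\BN|K(z,\eta)-K(z,\zeta_j)|^2\,K_\lambda(z,\zeta)\,d\tau(z)\right)^{1/2}d\sigma(\zeta)\lesssim 1,
\end{equation*}
where $K\in\{XP,P\}$, $K_\lambda(z,\zeta)=(1-|z|^2)^{\lambda n}/|1-\langle z,\zeta\rangle|^{\lambda n}$, and $d\tau$ is either $(1-|z|^2)^{1-n}dv$ or $(1-|z|^2)^{-n}d\mu$.

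The main obstacle will be the display above, namely a H\"ormander-type smoothness estimate for the Poisson--Szeg\"o kernel and its radial derivatives in the nonisotropic metric $d$. I plan to establish pointwise bounds of the form $|P(z,\eta)-P(z,\zeta_j)|\lesssim d(\eta,\zeta_j)(1-|z|^2)^n/|1-\langle z,\zeta_j\rangle|^{2n+1/2}$ whenever $d(\eta,\zeta_j)\le\tfrac12 d(z,\zeta_j)$, together with the analogous but strictly weaker estimate for $XP$ which loses a factor of $(1-|z|^2)^{-1}$; this extra loss is precisely the discrepancy between $XP$ and $P$ that the introduction singles out as nonstandard. These pointwise bounds I would combine with Forelli--Rudin integration in $\zeta$ and a dyadic annular decomposition $\SN\setminus 2B_j=\bigsqcup_{k\ge 1}(2^{k+1}B_j\setminus 2^k B_j)$, on each annulus using that $d(z,\zeta)\gtrsim 2^k r_j$ to produce a factor that is summable geometrically in $k$. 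The hypothesis $\lambda\ge 4$ provides enough decay of $K_\lambda$ in $z$ to absorb the extra singularity of $XP$ and to guarantee that the $z$-integrals over both the ``near $\zeta_j$'' region (where the mean value theorem with the smoothness bound applies) and the ``far'' region (where the raw size bound suffices) converge. Assembling these estimates yields the desired $O(1)$ bound, and plugging back completes the weak $(1,1)$ inequality.
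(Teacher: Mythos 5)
Your strategy is correct and \emph{genuinely different} from the paper's. You attack $\g^*_\lambda$ directly with a Calder\'on--Zygmund decomposition and a H\"ormander-type kernel estimate integrated against the full $\lambda$-weight $K_\lambda(z,\zeta)$. The paper instead works in two stages: it first proves the weak $(1,1)$ bound for the \emph{fixed-aperture} Lusin area integrals $S_\alpha^X(P[\cdot])$ and $S_\alpha^\mu(P[\cdot])$ via the same kind of CZ scheme (Lemma 3.2), then establishes a change-of-aperture principle at the $L^{1,\infty}$ level (Proposition 3.7 and Lemma 3.9), and finally writes $\BN=\bigcup_k\left(D_{4^{k+1}}(\zeta)\setminus D_{4^k}(\zeta)\right)\cup D_4(\zeta)$ to express $\g^*_\lambda$ as a geometrically convergent superposition of area integrals $T^A_{4^k}$ with growing apertures (this is where $\lambda\ge4$ enters: it makes the geometric series in $k$ absolutely summable after the aperture-change loss of $4^{kn}$). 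The advantage of the paper's route is that the kernel-smoothness computation is only needed for $z\in D_\alpha(\zeta)$, where $|1-\langle z,\zeta_j\rangle|\asymp|1-\langle\zeta,\zeta_j\rangle|$ holds automatically; your route avoids the aperture-change machinery at the cost of having to estimate ``double Forelli--Rudin'' integrals $\int_\BN(1-|z|^2)^c|1-\langle z,\zeta_j\rangle|^{-p}|1-\langle z,\zeta\rangle|^{-q}\,dv(z)$ over regions where $z$ and $\zeta$ vary independently.

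Two points in your plan need correction before the argument closes. First, your parenthetical labels are inverted: the mean-value/smoothness bound $|K(z,\eta)-K(z,\zeta_j)|\lesssim d(\eta,\zeta_j)\,(\,\cdots)$ only applies when $d(\eta,\zeta_j)\le\tfrac12 d(z,\zeta_j)$, i.e.\ when $z$ is \emph{far} from $\zeta_j$; for $z$ in the near region $d(z,\zeta_j)\lesssim r_j$ this hypothesis can fail (take $\eta$ near the boundary of $B_j$), and there you must use the raw size bound on each of $K(z,\eta)$ and $K(z,\zeta_j)$ separately. Second, the inequality ``$d(z,\zeta)\gtrsim2^k r_j$ on each annulus'' holds only when $z$ is confined near $\zeta_j$: if $z$ is far from $\zeta_j$ (the case where you invoke smoothness), both $z$ and $\zeta$ can sit in the same distant annulus and $d(z,\zeta)$ can be arbitrarily small, so $K_\lambda(z,\zeta)$ gives no help there — in this regime the $(1-|z|^2)^{\lambda n}$ in the numerator of $K_\lambda$, cancelled against the extra $|1-\langle z,\zeta_j\rangle|^{-4n-1}$ coming from the squared smoothness bound, is what actually produces the required decay, not the separation of $z$ from $\zeta$. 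These are not fatal — a careful dyadic decomposition in both $d(z,\zeta_j)$ and $d(z,\zeta)$, using the near/far dichotomy in the correct direction and $\lambda\ge4$ (in fact $\lambda>3$ suffices for the near-$\zeta_j$ integral in the $X$-case), does close the argument — but as written the sketch blurs precisely the case that carries the hidden difficulty.
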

	We would like to remark that Theorem \ref{Weak7} is also new in the complex setting. The proof of Theorem \ref{Weak7} relies on the weak type $(1,1)$ estimate of area integrals $S_\alpha^X(P[f])$ and $S_\alpha^\mu(P[f])$, i.e. Lemma \ref{Weak}. Apart from Lemma \ref{Weak}, we also need some auxiliary facts that enable us to reduce the proof of Theorem \ref{Weak7} to Lemma \ref{Weak}. 
	
	We divide this section into two parts. The first one aims to show Lemma \ref{Weak}. The other one is to prove Lemma \ref{ACest9}, which allows us to implement the reduction.

	\subsection{Weak $(1,1)$ estimates of area integrals}
	This subsection is devoted to the weak type $(1, 1)$ estimate of $S_\alpha^X(P[f])$ and $S_\alpha^\mu(P[f])$, which reads as follows.
	\begin{lemma}\label{Weak}
		Let $\alpha>1/2$. Then for any $f\in L^1$,
		\begin{itemize}
			\item[(i)] $\|S_\alpha^X(P[f])\|_{L^{1,\infty}}\lesssim\|f\|_{L^1}$;
			\item[(ii)] $\|S_\alpha^\mu(P[f])\|_{L^{1,\infty}}\lesssim\|f\|_{L^1}$.
		\end{itemize}
	\end{lemma}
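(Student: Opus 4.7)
The plan is to prove Lemma \ref{Weak} by a classical Calder\'on-Zygmund decomposition argument, viewing $f\mapsto S_\alpha^X(P[f])$ and $f\mapsto S_\alpha^\mu(P[f])$ as Hilbert-space-valued singular integral operators on the space of homogeneous type $(\SN,d,\sigma)$. Fix $\lambda>0$ and decompose $f\in L^1$ at height $\lambda$ as $f=g+\sum_j b_j$ with $\|g\|_{L^\infty}\lesssim\lambda$, $\mathrm{supp}(b_j)\subset B_j=B(\zeta_j,r_j)$, $\int b_j\,d\sigma=0$, the $B_j$ pairwise disjoint, $\|b_j\|_{L^1}\lesssim \lambda\sigma(B_j)$, and $\sum_j\sigma(B_j)\lesssim \|f\|_{L^1}/\lambda$. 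Writing $b=\sum_j b_j$ and splitting $\{S_\alpha(P[f])>\lambda\}\subset\{S_\alpha(P[g])>\lambda/2\}\cup\{S_\alpha(P[b])>\lambda/2\}$, I would bound each piece separately.

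For the good part I use $L^2$-boundedness together with Chebyshev. By (\ref{eq10}),
\[
\|S_\alpha^X(P[g])\|_{L^2}^2 \asymp \int_{\BN}|XP[g]|^2(1-|z|^2)\,dv,\qquad \|S_\alpha^\mu(P[g])\|_{L^2}^2 \asymp \int_{\BN}|P[g]|^2\,d\mu.
\]
In case (i), the right-hand side is $\lesssim\|g\|_{L^2}^2$ via the standard Littlewood-Paley identity obtained by applying Green's formula (\ref{Greenn}) to $|P[g]|^2$ (since $P[g]$ is $M$-harmonic, $\widetilde{\triangle}|P[g]|^2=2|\widetilde{\bigtriangledown}P[g]|^2$) and using (\ref{gra}) to extract $|XP[g]|^2(1-|z|^2)$. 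In case (ii), the right-hand side is $\lesssim\|P[g]\|_{H^2}^2\lesssim\|g\|_{L^2}^2$ by the Carleson embedding for $H^2$. Since $\|g\|_{L^2}^2\lesssim\lambda\|g\|_{L^1}\lesssim\lambda\|f\|_{L^1}$, the good piece contributes $\lesssim\|f\|_{L^1}/\lambda$ to the weak bound.

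For the bad part, set $\Omega^*=\bigcup_j \kappa B_j$ with $\kappa$ a suitably large dilation constant, so that $\sigma(\Omega^*)\lesssim\|f\|_{L^1}/\lambda$; it remains to show $\int_{\SN\setminus\Omega^*}S_\alpha(P[b])\,d\sigma\lesssim\|f\|_{L^1}$. Using $\int b_j\,d\sigma=0$ to replace the kernel $K(z,\eta)$ (equal to $XP$ in (i) and $P$ in (ii)) by $K(z,\eta)-K(z,\zeta_j)$ in the defining integral, Minkowski's integral inequality and summation over $j$ reduce the task to the vector-valued H\"ormander condition
\[
\sup_{\eta\in B_j}\int_{\SN\setminus \kappa B_j}\biggl(\int_{D_\alpha(\zeta)}|K(z,\eta)-K(z,\zeta_j)|^2\,dm_\zeta(z)\biggr)^{1/2}d\sigma(\zeta)\lesssim 1,
\]
uniformly in $j$, where $dm_\zeta=\frac{dv}{(1-|z|^2)^{n-1}}$ in (i) and $dm_\zeta=\frac{d\mu}{(1-|z|^2)^n}$ in (ii).

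Verifying this H\"ormander condition is the main obstacle and the technical heart of the argument. I would prove pointwise smoothness estimates of the form $|K(z,\eta)-K(z,\zeta_j)|\lesssim \bigl(d(\eta,\zeta_j)/d(z,\zeta_j)\bigr)^{2\delta}|K(z,\zeta_j)|$ for some $\delta>0$ and all $\eta\in B_j$, by differentiating the explicit Poisson-Szeg\"o kernel $P(z,\zeta)=(1-|z|^2)^n/|1-\langle z,\zeta\rangle|^{2n}$ in $\zeta$ (with $X_z$ applied beforehand in case (i)) along a curve on $\SN$ from $\eta$ to $\zeta_j$. Inserting this bound, interchanging order of integration via (\ref{eq10}), decomposing $\SN\setminus\kappa B_j$ into dyadic annuli $\{2^kr_j\le d(\zeta,\zeta_j)<2^{k+1}r_j\}$, and applying $\sigma(B(\zeta_j,2^{k+1}r_j))\lesssim(2^kr_j)^{2n}$ together with (in case (ii)) the Carleson growth $\mu(\{z\in\BN:d(\zeta,z)<r\})\lesssim r^{2n}$, the estimate collapses to a summable geometric series in $k$ provided $\delta>0$. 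This is the usual Calder\'on-Zygmund smoothness condition adapted to the nonisotropic geometry of $\SN$.
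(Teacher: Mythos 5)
Your proposal is correct and follows essentially the same route as the paper: a Calder\'on--Zygmund decomposition adapted to $(\SN,d,\sigma)$, $L^2$-boundedness plus Chebyshev for the good part, and a vector-valued H\"ormander/kernel-smoothness estimate for the bad part, using precisely the cancellation, annular decomposition and nonisotropic kernel estimates the paper develops in Lemmas 3.4--3.6. The only cosmetic differences are that the paper invokes Chang's theorem ($\|S_\alpha^{\widetilde{\bigtriangledown}}u\|_{L^2}\lesssim\|N_\beta u\|_{L^2}$) rather than running Green's formula directly on $|P[g]|^2$ for the good-part $L^2$ bound, and your pointwise smoothness display should read with the size bound $(1-|z|^2)^{n-1}|K_{2n}(z,\zeta_j)|$ (in case (i)) on the right rather than $|K(z,\zeta_j)|$ itself.
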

	We begin to show Lemma \ref{Weak}. Fix $f\in L^1$ and $t>0$. Let $T\in\{S_\alpha^X\circ P,S_\alpha^\mu\circ P\}$ and set 
	\begin{align}\label{reduce2}
		\Omega:=\{\zeta\in\SN: M(f)(\zeta)>t\}.
	\end{align}
	Since $M$ is of weak type $(1,1)$, one has $\sigma(\Omega)\lesssim t^{-1}\|f\|_{L^1}$. Hence, if $\Omega=\SN$,
	$$\sigma(\{\zeta\in\SN:|Tf(\zeta)|>t\})\leq\sigma(\SN)=\sigma(\Omega)\lesssim\frac{1}{t}\|f\|_{L^1}.$$
	Therefore, we only need to consider $\Omega\subsetneqq\SN$, and it suffices to show
	\begin{align}\label{reduce1}
		\sigma(\{\zeta\in\SN\setminus\Omega:|Tf(\zeta)|>t\})\lesssim\frac{1}{t}\|f\|_{L^1}.
	\end{align}
	So we transfer the proof of Lemma \ref{Weak} to the proof of (\ref{reduce1}), which relies on the standard Calder\'on-Zygmund argument. At first, let us introduce the following Whitney decomposition of $\Omega$, see for instance \cite[Page 69]{CW}.
	\begin{lemma}\label{Wh1}
		Let $\Omega\subsetneqq\SN$ be defined as in (\ref{reduce2}). There are constants $\mathscr{N}\in\mathbb{N}$, $0<a_1<1$ and $a_2,a_3>1$ depending only on $n$, and a sequence $\{B(\zeta_k,r_k)\}_k$ of nonisotropic metric balls such that
		\begin{itemize}
			\item[(i)] $\cup_k B(\zeta_k,r_k)=\Omega$;
			\item[(ii)] $B(\zeta_k,a_2r_k)\subset\Omega$ and $B(\zeta_k,a_3r_k)\cap(\SN\setminus\Omega)\neq\emptyset$;
			\item[(iii)] the balls $B(\zeta_k,a_1r_k)$ are pairwise disjoint;
			\item[(iv)] no point in $\Omega$ lies in more than $\mathscr{N}$ different balls $B(\zeta_k,a_2r_k)$.
		\end{itemize}
	\end{lemma}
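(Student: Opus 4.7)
The plan is to adapt the classical Whitney covering construction to the metric measure space $(\SN, d, \sigma)$, which is of homogeneous type by \eqref{doub1}. Since $Mf$ is lower semicontinuous on $\SN$, the level set $\Omega$ is open; as it is also a proper subset, for each $\zeta \in \Omega$ the distance to the complement $\delta(\zeta) := \inf\{d(\zeta, \eta) : \eta \in \SN \setminus \Omega\}$ is a positive, finite number and is 1-Lipschitz in the variable $\zeta$ with respect to $d$. Fixing a large constant $K > 1$ to be determined, I would set $r(\zeta) := \delta(\zeta)/K$ and work with the associated family of balls $\{B(\zeta, r(\zeta))\}_{\zeta \in \Omega}$.

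First I would apply a Vitali-type selection: extract a maximal pairwise disjoint subfamily $\{B(\zeta_k, a_1 r_k)\}_k$ out of $\{B(\zeta, a_1 r(\zeta))\}_{\zeta \in \Omega}$, where $r_k := r(\zeta_k)$ and $a_1 \in (0,1)$ is a fixed small parameter (taking $a_1 = 1/3$ works for a metric). Property (iii) is then automatic. For (i), given any $\eta \in \Omega$, maximality forces $B(\eta, a_1 r(\eta))$ to meet some $B(\zeta_k, a_1 r_k)$; the Lipschitz property of $\delta$ guarantees that $r(\eta)$ and $r_k$ are comparable whenever their shrunken balls intersect, and then the triangle inequality yields $\eta \in B(\zeta_k, r_k)$ provided $K$ has been chosen large enough. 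Property (ii) follows directly from the definition $r_k = \delta(\zeta_k)/K$: any $a_2 < K$ makes $a_2 r_k < \delta(\zeta_k)$, so $B(\zeta_k, a_2 r_k) \subset \Omega$, while any $a_3 > K$ forces $B(\zeta_k, a_3 r_k)$ to catch a point of $\SN \setminus \Omega$ by the definition of $\delta(\zeta_k)$.

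The main obstacle, and the only step where the homogeneous-type structure enters nontrivially, is the bounded overlap property (iv). Suppose $\xi \in \SN$ lies in $m$ distinct dilated balls $B(\zeta_{k_1}, a_2 r_{k_1}), \dots, B(\zeta_{k_m}, a_2 r_{k_m})$. The 1-Lipschitz estimate on $\delta$ forces the radii $r_{k_1}, \dots, r_{k_m}$ to be mutually comparable with a ratio depending only on $K$ and $a_2$; consequently all of these balls, together with their $a_1$-shrinkings, are contained in a single ball $B(\xi, C r_{k_1})$ whose measure is comparable to each $\sigma(B(\zeta_{k_j}, a_1 r_{k_j}))$ via the doubling properties \eqref{eq5}--\eqref{doub1}. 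Since the shrunken balls are pairwise disjoint by (iii), we obtain
\[
m \cdot c\,\sigma(B(\xi, C r_{k_1})) \leq \sum_{j=1}^m \sigma(B(\zeta_{k_j}, a_1 r_{k_j})) \leq \sigma(B(\xi, C r_{k_1})),
\]
where $c > 0$ is a constant depending only on $n$, $K$, $a_1$, $a_2$. Hence $m \leq 1/c =: \mathscr{N}$, a constant depending only on $n$, completing (iv). Everything else is essentially a transcription of the Euclidean Whitney argument, so the choice of the absolute constants $a_1, a_2, a_3, K$ can be finalized at the end to produce the constants stated in the lemma.
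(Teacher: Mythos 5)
Your proof is correct and follows the standard Whitney decomposition on spaces of homogeneous type (distance to complement, Vitali selection, doubling for bounded overlap). The paper itself does not prove this lemma but simply cites Coifman--Weiss \cite[Page 69]{CW}, and your construction is essentially the one found there; the only routine points to make explicit are that $\Omega$ is open since the maximal function is lower semicontinuous (so $\delta(\zeta)>0$), that the infimum $\delta(\zeta_k)$ is attained because $\SN$ is compact (giving the second half of (ii)), and that the maximal disjoint subfamily is countable because $\sigma(\SN)<\infty$.
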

	
	\begin{remark}\label{Wh2}
		One can construct  a collection of cubes $\{Q_k\}$ from the above collection $\{B(\zeta_k,r_k)\}_k$
		so that $Q_k$ are disjoint, $\cup_kQ_k=\Omega$ and $B(\zeta_k,a_1r_k)\subset Q_k\subset B(\zeta_k,r_k)$. See \cite[Page 15]{S3} for more details.
	\end{remark}
	
	From Lemma \ref{Wh1} and Remark \ref{Wh2}, we obtain collections of balls $\{B(\zeta_k,r_k)\}$ and cubes $\{Q_k\}$ satisfying the properties stated above. Now define $g(\zeta)=f(\zeta)$ for $\zeta\notin\Omega$, and
	$$g(\zeta)=\frac{1}{\sigma(Q_k)}\int_{Q_k}fd\sigma, \ \ \text{if} \ \ \zeta\in Q_k.$$
	Decompose $f=g+b=g+\sum_{k}b_k$, where
	$$b_k(\zeta)=\chi_{Q_k}\left[f(\zeta)-\frac{1}{\sigma(Q_k)}\int_{Q_k}fd\sigma\right],$$
	such that the following assertions hold (see e.g. \cite[Page 17]{S3}):
	\begin{itemize}
		\item[(cz-i)] $\|g\|_{L^\infty}\lesssim t $ and $\|g\|_{L^2}^2\lesssim t\|f\|_{L^1}$;
		\item[(cz-ii)] Each $b_k$ is supported in $Q_k$,
		$$\|b_k\|_{L^1}\lesssim t\sigma(Q_k) \ \ \text{and} \ \ \int_{\SN}b_kd\sigma=0;$$
		\item[(cz-iii)] $\sum_k\sigma(Q_k)\lesssim \frac{1}{t}\|f\|_{L^1}$.
	\end{itemize}
	
	Therefore, in order to prove (\ref{reduce1}), it suffices to show
	\begin{align}\label{reduce3}
		\sigma\big(\{\zeta\in\SN\setminus\Omega:|Tg(\zeta)|>t\}\big)\lesssim t^{-1}\|f\|_{L^1},
	\end{align}
	and
	\begin{align}\label{reduce4}
		\sigma\big(\left\{\zeta\in\SN\setminus\Omega:|Tb(\zeta)|>t\right\}\big)\lesssim t^{-1}\|f\|_{L^1}
	\end{align}
	for $T\in\{S_\alpha^X\circ P,S_\alpha^\mu\circ P\}$. Next, we will show \eqref{reduce3} and \eqref{reduce4} respectively.

	\subsubsection{Proof of (\ref{reduce3}): estimate for the good function $g$}\quad
	
	
	
	\begin{proof}[Proof of (\ref{reduce3}).]
		Firstly, consider $T=S_\alpha^X\circ P$. By \eqref{gra2} and Theorem A, we have $$\|S_{\alpha}^X(P[g])\|_{L^2}\lesssim\|S_{\alpha}^{\widetilde{\bigtriangledown}}(P[g])\|_{L^2}\lesssim\|g\|_{L^2},$$
		where the last inequality is due to \eqref{NonM} with $\omega=1$.
		Hence,  by using Chebyshev's inequality and (cz-i), one gets
		$$\sigma\big(\{\zeta\in\SN\setminus\Omega:S_{\alpha}^X(P[g])>t\}\big)\le\frac{\|S_{\alpha}^X(P[g])\|^2_{L^2}}{t^2}\lesssim\frac{\|g\|^2_{L^2}}{t^2}\lesssim\frac{1}{t}\|f\|_{L^1}.$$
		
		As for $T=S_\alpha^\mu\circ P$, we use  \eqref{eq10} to deduce that
		$$\|S_{\alpha}^{\mu}(P[g])\|_{L^2}^2\lesssim\int_{\BN}|P[g](z)|^2d\mu(z)
		\lesssim\|g\|_{L^2}^2,$$
		where the last inequality follows from the well-known Carleson embedding theorem \cite[Theorem 5.9]{Z}. Therefore, in the same way, one has
		$$\sigma\big(\{\zeta\in\SN\setminus\Omega:S_{\alpha}^\mu(P[g])>t\}\big)\lesssim\frac{1}{t}\|f\|_{L^1}.$$
		This finishes the proof.
	\end{proof}

	\subsubsection{Proof of (\ref{reduce4}): estimate for the bad function $b$}\quad
	
	The estimate for the bad function $b$ is much more involved compared with the Euclidean setting. As mentioned before, the main difficulties lie in the smoothness of $XP(z,\zeta)$ and $P(z,\zeta)$. We proceed with the pointwise estimate of $XP(z,\zeta)$ and $P(z,\zeta)$.
	
	For $\ell\in\mathbb{N}$ and $z,w\in\overline{\BN}$, we define
	$$K_{\ell}(z,w):=\frac{1}{(1-\langle z,w\rangle)^\ell}.$$
	We have the following size condition and regularity condition for $XP(z,\zeta)$.
	\begin{lemma}\label{smooth3}
		Suppose that $z\in\BN$ and $\xi,\eta\in\SN$. Then 
		\begin{itemize}
			\item[(i)] $|XP(z,\xi)|\lesssim(1-|z|^2)^{n-1}|K_{2n}(z,\xi)|$;
			\item[(ii)] $|XP(z,\xi)-XP(z,\eta)|\lesssim\mathscr{M}(z,\xi,\eta)$.
		\end{itemize}
		Here $\mathscr{M}(z,\xi,\eta)$ is given by
		\begin{align*}
			\mathscr{M}(z,\xi,\eta):=&(1-|z|^2)^n|1-\langle z,\xi\rangle|\cdot|K_{2n+2}(z,\xi)-K_{2n+2}(z,\eta)|\\
			&+(1-|z|^2)^n|\langle z,\xi\rangle-\langle z,\eta\rangle|\cdot|K_{2n+2}(z,\eta)|\\
			&+(1-|z|^2)^{n-1}|K_{2n}(z,\xi)-K_{2n}(z,\eta)|.
		\end{align*}
	\end{lemma}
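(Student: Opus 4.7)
The plan is to compute $XP(z,\xi)$ explicitly and split it into two pieces whose size and regularity each match a term of $\mathscr{M}$. I focus on $X=R$; the case $X=\overline{R}$ follows by interchanging the holomorphic and antiholomorphic factors. Writing $|1-\langle z,\xi\rangle|^{2n}=(1-\langle z,\xi\rangle)^{n}(1-\langle\xi,z\rangle)^{n}$ and using $R|z|^{2}=|z|^{2}$, $R(1-\langle z,\xi\rangle)^{n}=-n\langle z,\xi\rangle(1-\langle z,\xi\rangle)^{n-1}$, together with the fact that $R$ annihilates $(1-\langle\xi,z\rangle)^{n}$, the quotient rule yields
\[
RP(z,\xi) \;=\; \frac{n(1-|z|^{2})^{n-1}\bigl(\langle z,\xi\rangle-|z|^{2}\bigr)}{(1-\langle z,\xi\rangle)^{n+1}(1-\langle\xi,z\rangle)^{n}}.
\]
The key algebraic identity $\langle z,\xi\rangle-|z|^{2}=-(1-\langle z,\xi\rangle)+(1-|z|^{2})$ now gives the splitting $RP(z,\xi)=I(z,\xi)+II(z,\xi)$, where
\[
I(z,\xi)=-n(1-|z|^{2})^{n-1}|K_{2n}(z,\xi)|,\qquad II(z,\xi)=\frac{n(1-|z|^{2})^{n}}{(1-\langle z,\xi\rangle)^{n+1}(1-\langle\xi,z\rangle)^{n}}.
\]

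For (i), $|I|$ is already in the desired form, and I would bound $|II|=n(1-|z|^{2})^{n}|1-\langle z,\xi\rangle|^{-(2n+1)}$ via the elementary estimate $1-|z|^{2}\leq 2|1-\langle z,\xi\rangle|$ (valid since $|1-\langle z,\xi\rangle|\geq 1-|z|$ because $\xi\in\SN$); this absorbs one factor of $1-|z|^{2}$ into the denominator and produces $(1-|z|^{2})^{n-1}|K_{2n}(z,\xi)|$. Summing gives (i).

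For (ii), the difference $I(z,\xi)-I(z,\eta)$ is controlled by the reverse-triangle chain $\bigl||K_{2n}(z,\xi)|-|K_{2n}(z,\eta)|\bigr|\leq|K_{2n}(z,\xi)-K_{2n}(z,\eta)|$, which delivers exactly the third piece of $\mathscr{M}$. The decisive step is to rewrite $II$ by multiplying numerator and denominator by $(1-\langle\xi,z\rangle)$, producing
\[
II(z,\xi)=n(1-|z|^{2})^{n}(1-\langle\xi,z\rangle)\,|K_{2n+2}(z,\xi)|,
\]
and then to apply $ab-cd=a(b-d)+d(a-c)$ with $a=1-\langle\xi,z\rangle$, $b=|K_{2n+2}(z,\xi)|$, $c=1-\langle\eta,z\rangle$, $d=|K_{2n+2}(z,\eta)|$. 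The triangle inequality, combined with $|1-\langle\xi,z\rangle|=|1-\langle z,\xi\rangle|$ and $|\langle\xi,z\rangle-\langle\eta,z\rangle|=|\langle z,\xi\rangle-\langle z,\eta\rangle|$, then reproduces precisely the first two terms of $\mathscr{M}$.

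The main obstacle is spotting the right symmetrization of $II$: the natural factorization $II=n(1-|z|^{2})^{n}K_{n+1}(z,\xi)\overline{K_{n}(z,\xi)}$ does not display $K_{2n+2}$, and only after inserting an extra $(1-\langle\xi,z\rangle)$ into the numerator does the modulus $|K_{2n+2}|$ emerge with a controllable linear remainder; this is precisely what makes the differences split cleanly into the three structurally distinct terms of $\mathscr{M}$.
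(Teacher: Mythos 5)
Your computation of $RP(z,\xi)$ is correct, and so is the rest of the argument. The route is structurally the same as the paper's — differentiate $P$ explicitly, split $RP$ into two pieces, bound each piece for (i), and for (ii) handle one piece with the $ab-cd=a(b-d)+d(a-c)$ identity and the other with the reverse triangle inequality — but your decomposition differs in a useful way. The paper works directly with the two raw quotient-rule terms,
\[
RP(z,\xi)=n\,\frac{\bigl(\overline{1-\langle z,\xi\rangle}\bigr)\langle z,\xi\rangle(1-|z|^2)^n}{|1-\langle z,\xi\rangle|^{2n+2}}-n\,\frac{(1-|z|^2)^{n-1}|z|^2}{|1-\langle z,\xi\rangle|^{2n}},
\]
and to estimate the first difference it needs the additional observation
$\bigl|(\overline{1-\langle z,\xi\rangle})\langle z,\xi\rangle-(\overline{1-\langle z,\eta\rangle})\langle z,\eta\rangle\bigr|\lesssim|\langle z,\xi\rangle-\langle z,\eta\rangle|$,
which requires unpacking $(\overline{1-\langle z,\xi\rangle})\langle z,\xi\rangle=\langle z,\xi\rangle-|\langle z,\xi\rangle|^2$. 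You instead insert the identity $\langle z,\xi\rangle-|z|^2=-(1-\langle z,\xi\rangle)+(1-|z|^2)$ \emph{before} splitting, which yields prefactors $-1$ and $(1-\langle\xi,z\rangle)$ rather than $-|z|^2$ and $(\overline{1-\langle z,\xi\rangle})\langle z,\xi\rangle$. The payoff is that the two quantities that enter the $ab-cd$ step are then exactly $|1-\langle z,\xi\rangle|$ and $|\langle z,\xi\rangle-\langle z,\eta\rangle|$ with no extra estimate needed. Both routes are clean and short; yours trades one algebraic identity upfront for one fewer elementary estimate at the end.
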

	
	\begin{proof}
		We only deal with the case $X=R$, since the estimate for $\overline{R}$ is quite similar. By a simple calculation,
		$$RP(z,\xi)=n\frac{(\overline{1-\langle z,\xi\rangle})(1-|z|^2)^n\langle z,\xi\rangle}{|1-\langle z,\xi\rangle|^{2n+2}}-n\frac{(1-|z|^2)^{n-1}|z|^2}{|1-\langle z,\xi\rangle|^{2n}}.$$
		This immediately gives (i) by using $|1-\langle z,\xi\rangle|\gtrsim1-|z|^2$.
		
		On the other hand, by the triangle inequality,
		\begin{align*}
			&|(\overline{1-\langle z,\xi\rangle})\langle z,\xi\rangle|K_{2n+2}(z,\xi)|-(\overline{1-\langle z,\eta\rangle})\langle z,\eta\rangle|K_{2n+2}(z,\eta)||\\
			\leq&|(1-\langle z,\xi\rangle)\langle z,\xi\rangle|\cdot||K_{2n+2}(z,\xi)|-|K_{2n+2}(z,\eta)||\\
			&+|K_{2n+2}(z,\eta)|\cdot|(\overline{1-\langle z,\xi\rangle})\langle z,\xi\rangle-(\overline{1-\langle z,\eta\rangle})\langle z,\eta\rangle|
		\end{align*}
		and using the fact that 
		$$|(\overline{1-\langle z,\xi\rangle})\langle z,\xi\rangle-(\overline{1-\langle z,\eta\rangle})\langle z,\eta\rangle|\lesssim|\langle z,\xi\rangle-\langle z,\eta\rangle|,$$
		we get (ii).
	\end{proof}
	
	The following lemma can be found in \cite[Lemma 4.29]{Z} (or see \cite[Lemma 6.1.1]{R}).
	\begin{lemma}\label{smooth1}
		Suppose that $z\in\BN$, $\xi,\eta\in\SN$ and $\ell\in\mathbb{N}$. Then 
		\begin{itemize}
			\item[(i)] $|\langle z,\xi\rangle-\langle z,\eta\rangle|\leq2|1-\langle z,\eta\rangle|^{1/2}|1-\langle\xi,\eta\rangle|^{1/2}+|1-\langle\xi,\eta\rangle|$;
			\item[(ii)] $|K_\ell(z,\eta)-K_\ell(z,\xi)|\leq\sum_{k=0}^{\ell-1}|(\langle z,\eta\rangle-\langle z,\xi\rangle)K_{k+1}(z,\eta)K_{\ell-k}(z,\xi)|$.
		\end{itemize}
	\end{lemma}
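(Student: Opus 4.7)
The plan is to prove the two parts separately, each by an elementary algebraic identity followed by standard inner-product estimates. Neither part is difficult; the lemma is cited as textbook material, so the task is mostly to identify the right decomposition.

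For part (i), I would start from the identity $\langle z,\xi\rangle-\langle z,\eta\rangle = \langle z,\xi-\eta\rangle$ (using conjugate-linearity in the second slot) and then write $z = (z-\eta)+\eta$ to split this as
\[
\langle z,\xi\rangle-\langle z,\eta\rangle \;=\; \langle z-\eta,\xi-\eta\rangle + \langle \eta,\xi\rangle - \langle \eta,\eta\rangle \;=\; \langle z-\eta,\xi-\eta\rangle - (1-\langle\eta,\xi\rangle).
\]
The triangle inequality reduces everything to bounding the first term. For this I apply Cauchy--Schwarz, getting $|\langle z-\eta,\xi-\eta\rangle|\leq |z-\eta|\cdot|\xi-\eta|$, and then use $|\xi-\eta|^2 = 2\mathrm{Re}(1-\langle\xi,\eta\rangle) \leq 2|1-\langle\xi,\eta\rangle|$, and similarly $|z-\eta|^2 = 2\mathrm{Re}(1-\langle z,\eta\rangle) - (1-|z|^2) \leq 2|1-\langle z,\eta\rangle|$. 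This yields the factor $2|1-\langle z,\eta\rangle|^{1/2}|1-\langle\xi,\eta\rangle|^{1/2}$, and noting $|1-\langle\eta,\xi\rangle| = |1-\langle\xi,\eta\rangle|$ (complex conjugate) completes (i).

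For part (ii), set $a = 1-\langle z,\eta\rangle$ and $b = 1-\langle z,\xi\rangle$, so $K_\ell(z,\eta) = a^{-\ell}$, $K_\ell(z,\xi) = b^{-\ell}$, and $b-a = \langle z,\eta\rangle - \langle z,\xi\rangle$. The key algebraic identity is
\[
a^{-\ell} - b^{-\ell} \;=\; (b-a)\sum_{k=0}^{\ell-1} \frac{1}{a^{k+1}\,b^{\,\ell-k}},
\]
which is verified by recognizing the right-hand side as the telescoping sum $\sum_{k=0}^{\ell-1}\bigl(a^{-(k+1)}b^{-(\ell-k-1)} - a^{-k}b^{-(\ell-k)}\bigr)$ collapsing to $a^{-\ell} - b^{-\ell}$. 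Substituting back the definitions of $a$, $b$, and $K_m$, and then applying the triangle inequality termwise, yields (ii).

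The only conceivable obstacle is bookkeeping with signs and conjugations in part (i): one must be careful that the inner product is conjugate-linear in the second argument, so that $\langle\eta,\xi\rangle \neq \langle\xi,\eta\rangle$ in general, but their absolute values agree, which is precisely what the statement requires. Apart from that, the proof consists of routine identities and the elementary sphere bound $|w - \eta|^2 \le 2|1-\langle w,\eta\rangle|$ for $|\eta|=1$, applied once to $w=\xi\in\SN$ and once to $w=z\in\BN$.
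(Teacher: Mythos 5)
Your proof is correct. The paper does not supply its own argument for this lemma (it simply cites it as Zhu, Lemma~4.29, or Rudin, Lemma~6.1.1), and your derivation --- the decomposition $\langle z,\xi-\eta\rangle=\langle z-\eta,\xi-\eta\rangle-(1-\langle\eta,\xi\rangle)$ combined with Cauchy--Schwarz and the bound $|w-\eta|^2\le 2|1-\langle w,\eta\rangle|$ for part (i), and the telescoping factorization $a^{-\ell}-b^{-\ell}=(b-a)\sum_{k=0}^{\ell-1}a^{-(k+1)}b^{-(\ell-k)}$ for part (ii) --- is precisely the standard textbook argument.
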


	\begin{lemma}\label{smooth2}
		Let $\alpha>1/2$. Let $\zeta,\xi,\zeta_0\in\SN$ satisfy $d(\zeta_0,\xi)<\delta$ and $d(\zeta_0,\zeta)\geq C\delta$ for some  $\delta>0$ and $C>1$. Then for all $z\in D_\alpha(\zeta)$,
		$$d(\zeta,\zeta_0)\lesssim d(z,\xi) \ \ \mbox{and} \ \ d(\zeta,\zeta_0)\lesssim d(z,\zeta_0).$$
	\end{lemma}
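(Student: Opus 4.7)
The plan is to reduce both inequalities to a single auxiliary estimate: for every $w\in\SN$ and every $z\in D_{\alpha}(\zeta)$,
$$d(z,w)\gtrsim d(\zeta,w),$$
with implicit constant depending only on $\alpha$. The second claimed inequality would then follow at once by taking $w=\zeta_{0}$. For the first, applying the auxiliary estimate with $w=\xi$ gives $d(z,\xi)\gtrsim d(\zeta,\xi)$, and then the triangle inequality together with the hypothesis $d(\zeta_{0},\xi)<\delta\leq d(\zeta_{0},\zeta)/C$ yields
$$d(\zeta,\xi)\geq d(\zeta,\zeta_{0})-d(\zeta_{0},\xi)\geq(1-1/C)\,d(\zeta,\zeta_{0}),$$
which provides the required lower bound with positive constant $1-1/C$ since $C>1$.

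To establish the auxiliary estimate, I would split into two cases depending on the size of $d(\zeta,z)$ relative to $d(\zeta,w)$. In the first case $d(\zeta,z)\leq\tfrac{1}{2}d(\zeta,w)$, the triangle inequality $d(\zeta,w)\leq d(\zeta,z)+d(z,w)$ immediately gives $d(z,w)\geq\tfrac{1}{2}d(\zeta,w)$. In the complementary case $d(\zeta,z)>\tfrac{1}{2}d(\zeta,w)$, the defining condition of the Kor\'anyi region $d(\zeta,z)^{2}<\alpha(1-|z|^{2})$ forces
$$(1-|z|^{2})^{1/2}>\alpha^{-1/2}d(\zeta,z)>\frac{1}{2\sqrt{\alpha}}\,d(\zeta,w),$$
while the elementary inequality $d(z,w)^{2}=|1-\langle z,w\rangle|\geq 1-|z|\geq (1-|z|^{2})/2$, valid because $|w|=1$, gives $d(z,w)\geq (1-|z|^{2})^{1/2}/\sqrt{2}$. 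Combining these two bounds yields $d(z,w)\gtrsim d(\zeta,w)$, completing the proof of the auxiliary estimate.

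No step here is delicate: the whole argument is a short case split using only the triangle inequality for the nonisotropic metric, the definition of $D_{\alpha}(\zeta)$, and the trivial lower bound $d(z,w)^{2}\geq 1-|z|$ available for $w\in\SN$. The only subtlety worth flagging is that one cannot bound $d(z,\xi)$ directly from below via $d(z,\xi)\geq d(\zeta,\xi)-d(z,\zeta)$ without a case distinction, because the subtracted term $d(z,\zeta)$ may be of the same order as $d(\zeta,\xi)$; it is precisely in that regime that the Kor\'anyi constraint forces $(1-|z|^{2})^{1/2}$ to be comparable to $d(\zeta,w)$, which then reappears as the desired lower bound on $d(z,w)$.
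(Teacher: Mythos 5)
Your proof is correct and uses the same core ingredients as the paper's argument: the Kor\'anyi constraint $d(\zeta,z)^2<\alpha(1-|z|^2)$, the triangle inequality, and the elementary bound $|1-\langle z,w\rangle|\geq 1-|z|$ for $w\in\SN$. The only difference is that the paper avoids your case split by inserting the bound $d(z,\zeta)<\sqrt{\alpha(1-|z|^2)}\leq\sqrt{2\alpha}\,\sqrt{1-|z|}\leq\sqrt{2\alpha}\,d(z,w)$ directly into $d(\zeta,w)\leq d(z,\zeta)+d(z,w)$, which yields your auxiliary estimate $d(\zeta,w)\leq(\sqrt{2\alpha}+1)\,d(z,w)$ in a single line without distinguishing regimes.
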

	
	\begin{proof}
		For the first inequality, by using the triangle inequality, one has
		$$d(\zeta,\xi)
		\geq d(\zeta,\zeta_0)-d(\zeta_0,\xi)\geq(C -1)\delta,$$
		which implies that
		$$d(\zeta,\zeta_0)\leq d(\zeta,\xi)+d(\xi,\zeta_0)<d(\zeta,\xi)+\delta<(1+1/(C -1))d(\zeta,\xi).$$
		Note that for $z\in D_\alpha(\zeta)$ and $\xi\in\SN$,
		\begin{align*}
			d(\zeta,\xi)\leq&d(z,\zeta)+d(z,\xi)<\sqrt{\alpha(1-|z|^2)}+d(z,\xi)\\
			\leq&\sqrt{2\alpha(1-|z|)}+d(z,\xi)
			\leq(\sqrt{2\alpha}+1)d(z,\xi).
		\end{align*}
		Hence, $d(\zeta,\zeta_0)\lesssim d(z,\xi)$.
		
		We turn to the second inequality. 
		For any  $z\in D_\alpha(\zeta)$, we obtain
		\begin{align*}
			d(\zeta,\zeta_0)\leq&d(\zeta,z)+d(z,\zeta_0)<\sqrt{\alpha(1-|z|^2)}+d(z,\zeta_0)\\
			\leq&\sqrt{2\alpha(1-|z|)}+d(z,\zeta_0)
			\leq(\sqrt{2\alpha}+1)d(z,\zeta_0),
		\end{align*}
		which completes the proof.
	\end{proof}
	
	We are now in a position to show (\ref{reduce4}). We will show (\ref{reduce4}) for $T=S_\alpha^X\circ P$ and $T=S_\alpha^\mu\circ P$ respectively.
	\begin{proof}[Proof of (\ref{reduce4}) for $T=S_\alpha^X\circ P$.]
		By  Chebyshev’s inequality,	 
		$$\sigma(\{\zeta\in\SN\setminus\Omega: S_{\alpha}^X(P[b])(\zeta)>t\})\lesssim\frac{1}{t}\int_{\SN\setminus \Omega}S_{\alpha}^X(P[b])(\zeta)d\sigma.$$
		By Lemma \ref{smooth3}, (cz-ii), and (cz-iii), 
		$$\sum_{k}\int_{\SN}|b_k(\xi)||XP(z,\xi)|d\sigma(\xi)\lesssim\frac{1}{(1-|z|^2)^{n+1}}\sum_{k}\|b_k\|_{L^1}\lesssim\frac{1}{(1-|z|^2)^{n+1}}\|f\|_{L^1},$$
		which implies $XP[b](z)=\sum_{k}XP[b_k](z)$. Then by the triangle inequality, we have
		\begin{align*}
			\int_{\SN\setminus \Omega}S_{\alpha}^X(P[b])(\zeta)d\sigma
			\leq&\int_{\SN\setminus \Omega}\left(\int_{D_\alpha(\zeta)}\left(\sum_{k}|XP[b_k](z)|\right)^2\frac{dv(z)}{(1-|z|^2)^{n-1}}\right)^{1/2}d\sigma(\zeta)\\
			\leq &\sum_k\int_{\SN\setminus \Omega}S_{\alpha}^X(P[b_k])(\zeta)d\sigma\\
			\leq&\sum_k\int_{\SN\setminus B(\zeta_k,a_2r_k)}S_{\alpha}^X(P[b_k])(\zeta)d\sigma,
		\end{align*}
		where the last inequality follows from $B(\zeta_k,a_2r_k)\subset\Omega$ for each $k$. The proof will be completed if for each $\zeta\in\SN\setminus B(\zeta_k,a_2r_k)$,
		\begin{align}\label{PbClaim1}
			S_{\alpha}^X(P[b_k])(\zeta)\lesssim|r_kK_{(2n+1)/2}(\zeta,\zeta_k)|\cdot\|b_k\|_{L^1}.
		\end{align}
		Indeed, by (\ref{PbClaim1}), (cz-ii) and (cz-iii), we deduce that
		\begin{align*}
			\int_{\SN\setminus \Omega}S_{\alpha}^X(P[b])(\zeta)d\sigma
			\leq&\sum_{k}\sum_{j=1}^\infty\int_{B(\zeta_k,2^ja_2r_k)\setminus B(\zeta_k,2^{j-1}a_2r_k)}\frac{r_kd\sigma}{|1-\langle \zeta,\zeta_k\rangle|^{(2n+1)/2}}\|b_k\|_{L^1}\\
			\lesssim&\sum_k\sum_{j=1}^\infty\frac{r_k\sigma(B(\zeta_k,2^ja_2r_k))}{(2^{j-1}a_2r_k)^{2n+1}}\|b_k\|_{L^1}\\
			\lesssim&\sum_k\|b_k\|_{L^1}\lesssim\|f\|_{L^1}.
		\end{align*}

		It remains to show \eqref{PbClaim1}. Fix
		$\zeta\in\SN\setminus B(\zeta_k,a_2r_k)$. Define
		$$E_0(\zeta)= D_\alpha(\zeta)\cap\{z\in\BN: d(z,\zeta)<2d(\zeta,\zeta_k)\}$$
		and for each $j\in\mathbb{N}$
		$$E_j(\zeta)= D_\alpha(\zeta)\cap\{z\in\BN: 2^jd(\zeta,\zeta_k)\leq d(z,\zeta)<2^{j+1}d(\zeta,\zeta_k)\}.$$
		By the cancellation property  of $b_k$-(cz-ii), 
		\begin{align}\label{ga1}
			[S_{\alpha}^X(P[b_k])(\zeta)]^2\leq\sum_{j=0}^{\infty}\int_{E_j(\zeta)}F(z)\frac{dv(z)}{(1-|z|^2)^{n-1}},
		\end{align}
		where $F$ is defined by $$F(z)=\left(\int_{B(\zeta_k,r_k)}
		|XP(z,\xi)-XP(z,\zeta_k)||b_k(\xi)|d\sigma(\xi)\right)^2.$$
		
		We claim that
		\begin{align}\label{ga2}
			\int_{E_0(\zeta)}F(z)\frac{dv(z)}{(1-|z|^2)^{n-1}}\lesssim|r^2_kK_{2n+1}(\zeta,\zeta_k)|\cdot\|b_k\|^2_{L^1}. 
		\end{align}
		To see this, by Lemma \ref{smooth3}, 
		\begin{align}\label{ker2}
			|XP(z,\xi)-XP(z,\zeta_k)|\lesssim(1-|z|^2)^{n-1}I_1+(1-|z|^2)^n(I_2+I_3),
		\end{align}
		where $I_1,I_2,I_3$ are given by
		\begin{eqnarray*}
			I_1& = &|K_{2n}(z,\xi)-K_{2n}(z,\zeta_k)|,\\
			I_2& = &|1-\langle z,\xi\rangle|\cdot|K_{2n+2}(z,\xi)-K_{2n+2}(z,\zeta_k)|, \\
			I_3& = &|\langle z,\xi\rangle-\langle z,\zeta_k\rangle|\cdot|K_{2n+2}(z,\zeta_k)|.
		\end{eqnarray*}
		Then by (\ref{ker2}), we see that 
		\begin{align}\label{ker3}
			\int_{E_0}F(z)\frac{dv(z)}{(1-|z|^2)^{n-1}}\lesssim II_1+II_2,
		\end{align}
		where $II_1$ and $II_2$ are defined by
		\begin{eqnarray*}
			II_1& = &\int_{E_0(\zeta)}\left(\int_{B(\zeta_k,r_k)}I_1|b_k(\xi)|d\sigma(\xi)\right)^2(1-|z|^2)^{n-1}dv(z),\\
			II_2& = &\int_{E_0(\zeta)}\left(\int_{B(\zeta_k,r_k)}(I_2+I_3)|b_k(\xi)|d\sigma(\xi)\right)^2(1-|z|^2)^{n+1}dv(z).
		\end{eqnarray*}
		Hence, it is enough to show
		\begin{align}\label{ga3}
			\max\{II_1,II_2\}\lesssim|r^2_kK_{2n+1}(\zeta,\zeta_k)|\cdot\|b_k\|^2_{L^1}. 
		\end{align}
		
		For $\xi\in B(\zeta_k,r_k)$ and $z\in  D_\alpha(\zeta)$, since $d(\xi,\zeta_k)<r_k$ and $d(\zeta,\zeta_k)\geq a_2r_k$, it follows from Lemma \ref{smooth2} that
		$$d(\zeta,\zeta_k)\lesssim d(z,\xi) \ \ \text{and} \ \ d(\zeta,\zeta_k)\lesssim d(z,\zeta_k),$$
		and as a consequence, $$d(\xi,\zeta_k)<r_k\lesssim d(\zeta,\zeta_k)\lesssim d(z,\zeta_k).$$
		Combining with Lemma \ref{smooth1}, we deduce that
		\begin{align}\label{smooth4}
			\begin{split}
				I_1\leq&\sum_{i=0}^{2n-1}\frac{|\langle z,\xi\rangle-\langle z,\zeta_k\rangle|}{|1-\langle z,\xi\rangle|^{i+1}|1-\langle z,\zeta_k\rangle|^{2n-i}}\\
				\leq&\sum_{i=0}^{2n-1}\frac{2|1-\langle z,\zeta_k\rangle|^{1/2}|1-\langle \xi,\zeta_k\rangle|^{1/2}+|1-\langle \xi,\zeta_k\rangle|}{|1-\langle z,\xi\rangle|^{i+1}|1-\langle z,\zeta_k\rangle|^{2n-i}}\\
				\lesssim&\sum_{i=0}^{2n-1}\frac{|1-\langle z,\zeta_k\rangle|^{1/2}|1-\langle \xi,\zeta_k\rangle|^{1/2}}{|1-\langle z,\xi\rangle|^{i+1}|1-\langle z,\zeta_k\rangle|^{2n-i}}\\
				\lesssim&\frac{r_k}{|1-\langle\zeta,\zeta_k\rangle|^{2n+1/2}}.
			\end{split}
		\end{align}
		In a similar way, 
		\begin{align}\label{ga12}
			I_2\lesssim\frac{r_k}{|1-\langle\zeta,\zeta_k\rangle|^{2n+3/2}},
		\end{align}
		and for the term $I_3$,
		\begin{align}\label{smooth5}
			\begin{split}
				I_3=&|\langle z,\xi\rangle-\langle z,\zeta_k\rangle|\cdot|K_{2n+2}(z,\zeta_k)|\\
				\lesssim&\frac{|1-\langle z,\zeta_k\rangle|^{1/2}|1-\langle \xi,\zeta_k\rangle|^{1/2}}{|1-\langle z,\zeta_k\rangle|^{2n+2}}\\
				\lesssim&\frac{r_k}{|1-\langle\zeta,\zeta_k\rangle|^{2n+3/2}}.
			\end{split}
		\end{align}
		Now by (\ref{smooth4}), (\ref{ga12}), and (\ref{smooth5}), applying $v(E_0(\zeta))\lesssim|1-\langle \zeta,\zeta_k\rangle|^{n+1}$ and noting for $z\in E_0(\zeta)$ $$ 1-|z|^2\lesssim|1-\langle\zeta,\zeta_k\rangle|,$$
		we obtain
		\begin{align*}
			II_1\lesssim\frac{r_k^2v(E_0(\zeta))}{|1-\langle\zeta,\zeta_k\rangle|^{2(2n+1/2)-n+1}}\|b_k\|_{L^1}^2
			\lesssim\frac{r_k^2}{|1-\langle\zeta,\zeta_k\rangle|^{2n+1}}\|b_k\|_{L^1}^2
		\end{align*}
		and
		\begin{align*}
			II_2\lesssim\frac{r_k^2v(E_0(\zeta))}{|1-\langle\zeta,\zeta_k\rangle|^{2(2n+3/2)-n-1}}\|b_k\|_{L^1}^2
			\lesssim\frac{r_k^2}{|1-\langle\zeta,\zeta_k\rangle|^{2n+1}}\|b_k\|_{L^1}^2.
		\end{align*}
		This gives (\ref{ga3}) and thus (\ref{ga2}) is proved.
		
		We then deal with other terms on the right-side of (\ref{ga1}) and we also state that
		\begin{align}\label{ga4}
			\sum_{j=1}^{\infty}\int_{E_j(\zeta)}F(z)\frac{dv(z)}{(1-|z|^2)^{n-1}}\lesssim|r^2_kK_{2n+1}(\zeta,\zeta_k)|\cdot\|b_k\|^2_{L^1}.
		\end{align}
		Indeed, the argument for (\ref{ga4}) is quite similar as in (\ref{ga2}).  
		Observe that by Lemma \ref{smooth3}, 
		\begin{align*}
			|XP(z,\xi)-XP(z,\zeta_k)|\lesssim&(1-|z|^2)^{n-1}I_1+(1-|z|^2)^n(I_2+I_3)\\
			=&\left[(1-|z|^2)^{\frac{3n}{2}}I_1+(1-|z|^2)^{\frac{3n}{2}+1}(I_2+I_3)\right](1-|z|^2)^{-\frac{n+2}{2}}.
		\end{align*}
		On the one hand, by using the same argument as in \eqref{smooth4}, \eqref{ga12}, and \eqref{smooth5}, one can conclude from  Lemmas \ref{smooth1} and \ref{smooth2} that for $z\in  D_\alpha(\zeta)$,
		\begin{align}\label{1}
			(1-|z|^2)^{3n/2}I_1\lesssim\frac{r_k}{|1-\langle\zeta,\zeta_k\rangle|^{(n+1)/2}}
		\end{align}
		and
		\begin{align}\label{2}
			(1-|z|^2)^{3n/2+1}(I_2+I_3)\lesssim\frac{r_k}{|1-\langle\zeta,\zeta_k\rangle|^{(n+1)/2}}.
		\end{align}
		Hence, it follows from \eqref{1} and \eqref{2} that for each $j\in\mathbb{N}$ and $z\in D_\alpha(\zeta)$,
		\begin{align}\label{4}
			|XP(z,\xi)-XP(z,\zeta_k)|\lesssim(1-|z|^2)^{-\frac{n+2}{2}}\frac{r_k}{|1-\langle\zeta,\zeta_k\rangle|^{(n+1)/2}}.
		\end{align}
		On the other hand, since $1-|z|^2\leq2|1-\langle z,\zeta\rangle|\leq 2\alpha(1-|z|^2)$ for $z\in D_\alpha(\zeta)$, we have
		\begin{align}\label{3}
			2^{2j}|1-\langle\zeta,\zeta_k\rangle| \asymp1-|z|^2
		\end{align} 
		for $z\in E_j(\zeta)$ with each $j\in\mathbb{N}$. Plug \eqref{4} into the left-side of (\ref{ga4}), and use \eqref{3} and $v(E_j(\zeta))\lesssim(2^{2j})^{n+1}|1-\langle \zeta,\zeta_k\rangle|^{n+1}$ for each $j\geq1$, then we find
		\begin{align*}
			\sum_{j=1}^{\infty}\int_{E_j(\zeta)}F(z)\frac{dv(z)}{(1-|z|^2)^{n-1}}\lesssim&\sum_{j=1}^{\infty}\frac{r_k^2 v(E_j(\zeta))}{2^{2j(2n+1)}|1-\langle\zeta,\zeta_k\rangle|^{3n+2}}\|b_k\|_{L^1}^2\\
			\lesssim&|r_k^2K_{2n+1}(\zeta,\zeta_k)|\|b_k\|_{L^1}^2,
		\end{align*}
		which is exactly (\ref{ga4}). Finally, by (\ref{ga1}), (\ref{ga2}), and (\ref{ga4}), we complete the proof of \eqref{PbClaim1}.
	\end{proof}
	
	Now we come to the proof of (\ref{reduce4}) for $T=S_{\alpha}^\mu\circ P$.
	\begin{proof}[Proof of (\ref{reduce4}) for $T=S_{\alpha}^\mu\circ P$.]
		Observe that
		\begin{align*}
			\int_{\SN\setminus \Omega}S_{\alpha}^\mu(P[b])d\sigma
			\leq&\int_{\SN\setminus \Omega}\left(\int_{D_\alpha(\zeta)}\left(\sum_{k}|P[b_k](z)|\right)^2\frac{d\mu(z)}{(1-|z|^2)^{n}}\right)^{1/2}d\sigma(\zeta)\\
			\leq&\sum_k\int_{\SN\setminus B(\zeta_k,a_2r_k)}S_{\alpha}^\mu(P[b_k])d\sigma.
		\end{align*}
		Hence, by using the same argument as in the case $T=S_\alpha^X\circ P$, we also show
		\begin{align}\label{ACest4}
			S^{\mu}_{\alpha}(P[b_k])(\zeta)\lesssim|r_kK_{(2n+1)/2}(\zeta,\zeta_k)|\cdot\|b_k\|_{L^1},
		\end{align}
		for $\zeta\in\SN\setminus B(\zeta_k,a_2r_k)$. To see this,  by definition
		$$[S_{\alpha}^{\mu}(P[b_k])(\zeta)]^2= \int_{D_\alpha(\zeta)}\left(\int_{\SN}(1-|z|^2)^n|K_{2n}(z,\xi)|b_k(\xi)\sigma(\xi)\right)^2\frac{d\mu(z)}{(1-|z|^2)^n},$$
		it then follows from (cz-ii) that
		\begin{align}\label{ACest5}
			S^{\mu}_{\alpha}(P[b_k])(\zeta)\lesssim III_1+III_2.
		\end{align}
		Here $III_1$ and $III_2$ are given by
		\begin{eqnarray*}
			III_1& = &\int_{E_0(\zeta)}\left(\int_{B(\zeta_k,r_k)}I_1|b_k(\xi)|d\sigma(\xi)\right)^2(1-|z|^2)^nd\mu(z),\\
			III_2& = &\sum_{j=1}^\infty\int_{E_j(\zeta)}\left(\int_{B(\zeta_k,r_k)}(1-|z|)^{3n/2}I_1|b_k(\xi)|d\sigma(\xi)\right)^2 \frac{d\mu(z)}{(1-|z|^2)^{2n}},
		\end{eqnarray*}
		where $I_1$, $E_0$ and $E_j$ for $j\geq1$ are the same as before.
		
		Firstly, consider the term $III_1$. By the definition of Carleson measure, $\mu(E_0(\zeta))\lesssim|1-\langle\zeta,\zeta_k\rangle|^n$.  Then by the fact that $1-|z|^2\lesssim|1-\langle\zeta,\zeta_k\rangle|$ for $z\in E_0(\zeta)$ and using (\ref{smooth4}), we get
		\begin{align}\label{ACest6}
			III_1\lesssim\|b_k\|_{L^1}^2\frac{r_k^2\mu(E_0(\zeta))}{|1-\langle\zeta,\zeta_k\rangle|^{2(2n+1/2)-n}}
			\lesssim|r_k^2K_{2n+1}(\zeta,\zeta_k)|\cdot\|b_k\|_{L^1}^2.
		\end{align}
		
		For the term $III_2$, since $2^{2j}|1-\langle\zeta,\zeta_k\rangle|\asymp 1-|z|^2$ when $z\in E_j(\zeta)$ and for each $j\geq1$, $\mu(E_j(\zeta))\lesssim2^{2nj}|1-\langle\zeta,\zeta_k\rangle|^n$, one has
		\begin{align}\label{ACest7}
			III_2\lesssim\|b\|_{L^1}^2
			\sum_{j=1}^\infty\frac{r_k^2\mu(E_j(\zeta))}{(2^{2j})^{2n}|1-\langle\zeta,\zeta_k\rangle|^{3n+1}}
			\lesssim|r_k^2K_{2n+1}(\zeta,\zeta_k)|\cdot\|b_k\|_{L^1}^2,
		\end{align}
		where in the first inequality we also use \eqref{1}. 
		Finally, by (\ref{ACest6}), (\ref{ACest7}) and (\ref{ACest5}), we obtain \eqref{ACest4}. This completes the proof.
	\end{proof}
	
	Finally, we finish the proof of Lemma \ref{Weak}. Next, we will apply Lemma \ref{Weak} to show Theorem \ref{Weak7} with the help of Lemma \ref{ACest9}.
	
	\subsection{Some auxiliary facts}
	This subsection is devoted to showing Lemma \ref{ACest9}, which permits us to complete the proof of Theorem \ref{Weak7}. We proceed with some useful propositions.
	
	Let  $u$ be a measurable function on $\BN$ and $\nu$ be a positive Borel measure on $\BN$. For $\alpha>0$ and $\zeta\in\SN$, define 
	\begin{align}\label{ACest8}
		T_\alpha u(\zeta)=\left(\int_{\Gamma_\alpha(\zeta)}|u|^2d\nu\right)^{1/2},
	\end{align}
	where
	$$\Gamma_\alpha(\zeta)=\{z=r\eta:|1-\langle \eta,\zeta\rangle|<\alpha(1-r), \eta\in\SN,0\leq r<1\}.$$
	\begin{proposition}\label{T}
		For any $0<\beta\leq1$ and $\alpha>\beta$,
		$$\|T_\alpha u\|_{L^{1,\infty}}\lesssim\frac{\alpha^n}{\beta^n}\|T_\beta u\|_{L^{1,\infty}}.$$
	\end{proposition}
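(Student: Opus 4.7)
The plan is to reduce the weak-$(1,1)$ estimate to the distributional inequality
$$\sigma(\{T_\alpha u > \lambda\}) \lesssim (\alpha/\beta)^n\,\sigma(\{T_\beta u > c\lambda\}), \qquad \forall\,\lambda>0,$$
for some absolute constant $c\in(0,1]$; multiplying by $\lambda$ and taking the supremum over $\lambda>0$ then yields the proposition, since $c\lambda\cdot\sigma(\{T_\beta u>c\lambda\})\leq\|T_\beta u\|_{L^{1,\infty}}$.

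Fix $\lambda>0$ and let $E:=\{T_\beta u > c\lambda\}$, together with the Hardy--Littlewood enlargement $E^*:=\{\zeta\in\SN: M\chi_E(\zeta)>c_0(\beta/\alpha)^n\}$ for a small absolute constant $c_0$ to be chosen. The weak-$(1,1)$ boundedness of $M$ (Theorem \ref{Apw} with $\omega\equiv 1$) gives $\sigma(E^*)\lesssim c_0^{-1}(\alpha/\beta)^n\sigma(E)$, which already has the correct dependence on $\alpha/\beta$. It therefore suffices to prove the inclusion $\{T_\alpha u>\lambda\}\subset E^*$ up to a null set.

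The geometric ingredient is a \emph{shadow-density} observation. For every $z=s\eta\in\Gamma_\alpha(\zeta_0)$, the dual set $A(z):=\{\xi\in\SN:z\in\Gamma_\beta(\xi)\}=B(\eta,\sqrt{\beta(1-s)})$ has $\sigma(A(z))\asymp\beta^n(1-s)^n$, and the triangle inequality in $d$ puts it inside the reference ball $B_z:=B(\zeta_0,2\sqrt{\alpha(1-s)})$ of measure $\asymp\alpha^n(1-s)^n$. For $\zeta_0\notin E^*$, the density hypothesis gives $\sigma(B_z\cap E)\leq c_0(\beta/\alpha)^n\sigma(B_z)$, hence, choosing $c_0$ small enough (absolute),
$$\sigma(A(z)\setminus E)\geq\tfrac{1}{2}\sigma(A(z))\gtrsim\beta^n(1-s)^n.$$
Thus at every scale, the majority of the $\beta$-shadow of each point in the $\alpha$-cone over $\zeta_0$ lies in the good set where $T_\beta u\leq c\lambda$.

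The main obstacle is converting this density fact into the pointwise bound $T_\alpha u(\zeta_0)\leq\lambda$. Decomposing into dyadic height shells $A_k:=\Gamma_\alpha(\zeta_0)\cap\{2^{-k-1}<1-|z|\leq 2^{-k}\}$, Fubini applied to $\int_{B_k\cap E^c}[T_\beta u(\xi)]^2\,d\sigma(\xi)\leq c^2\lambda^2\sigma(B_k)$, paired with the shadow-density bound on $\sigma(A(z)\setminus E)$, yields the per-shell estimate $\int_{A_k}|u|^2\,d\nu\lesssim c^2(\alpha/\beta)^n\lambda^2$. The hard point is that this per-shell constant is uniform in $k$, so the naive sum diverges; to close the argument one must either refine $c$ and $c_0$ scale-dependently to obtain geometric decay in $k$ (e.g.\ by running a stopping-time variant of the density argument at each scale) or complement the per-shell bound with the Fubini identity $\|T_\alpha u\|_{L^2}^2\asymp(\alpha/\beta)^n\|T_\beta u\|_{L^2}^2$ to control the tail contribution of high-$k$ shells. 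Balancing these two contributions so the shell sum telescopes is the delicate step; once it is in hand, the distributional inequality, and hence the proposition, follows.
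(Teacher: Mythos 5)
Your setup is the right one---the enlarged set $E^*=\{M\chi_E>c_0(\beta/\alpha)^n\}$, the weak-$(1,1)$ bound $\sigma(E^*)\lesssim(\alpha/\beta)^n\sigma(E)$, and the shadow-density observation that for $\zeta_0\notin E^*$ every $z=s\eta\in\Gamma_\alpha(\zeta_0)$ has $\sigma(A(z)\setminus E)\geq\tfrac12\sigma(A(z))\gtrsim\beta^n(1-s)^n$---but you then pursue the wrong goal. You try to upgrade the density fact to the pointwise inclusion $\{T_\alpha u>\lambda\}\subset E^*$, and you honestly report that the resulting per-shell estimates are uniform in the scale and the sum diverges. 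That is not a technical nuisance to be patched by rebalancing $c$ and $c_0$; it is a sign that the approach is aimed at the wrong statement. What one actually needs is the weaker bound $\sigma(\{T_\alpha u>\lambda\}\setminus E^*)\lesssim(\alpha/\beta)^n\lambda^{-1}\|T_\beta u\|_{L^{1,\infty}}$, and this is obtained by Chebyshev at exponent $2$, not by a pointwise argument.

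Concretely, Chebyshev gives $\sigma(\{T_\alpha u>\lambda\}\setminus E^*)\leq\lambda^{-2}\int_{\SN\setminus E^*}(T_\alpha u)^2\,d\sigma$. By Fubini, $\int_{\SN\setminus E^*}(T_\alpha u)^2\,d\sigma=\int_{\bigcup_{\zeta\notin E^*}\Gamma_\alpha(\zeta)}|u(z)|^2\,\sigma\big(A_\alpha(z)\cap(\SN\setminus E^*)\big)\,d\nu(z)$ where $A_\alpha(z)$ is the $\alpha$-shadow of $z$. Your density observation lets you compare $\sigma(A_\alpha(z)\cap(\SN\setminus E^*))\leq\sigma(A_\alpha(z))\lesssim(\alpha/\beta)^n\sigma(A_\beta(z)\setminus E)$, and combined with $\bigcup_{\zeta\notin E^*}\Gamma_\alpha(\zeta)\subset\bigcup_{\xi\notin E}\Gamma_\beta(\xi)$ this yields $\int_{\SN\setminus E^*}(T_\alpha u)^2\,d\sigma\lesssim(\alpha/\beta)^n\int_{\SN\setminus E}(T_\beta u)^2\,d\sigma$. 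Finally---and this is the step that replaces your divergent shell sum---on $\SN\setminus E$ one has $T_\beta u\leq c\lambda$ by construction, so the layer-cake formula gives $\int_{\SN\setminus E}(T_\beta u)^2\,d\sigma\leq 2\int_0^{c\lambda}s\,\sigma(\{T_\beta u>s\})\,ds\lesssim\lambda\|T_\beta u\|_{L^{1,\infty}}$, producing exactly the needed $\lambda^{-1}$ after dividing by $\lambda^2$. This is, in essence, the Fefferman--Stein change-of-aperture argument, and it is what the paper does (its Lemmas~\ref{T1} and~\ref{T2} are precisely the inclusion and the $L^2$ comparison above). Your ``complement with the Fubini identity'' suggestion is the right instinct, but it must be applied to the truncated integral over $\SN\setminus E$, where $T_\beta u$ is bounded by $c\lambda$, rather than as a global $L^2$ norm comparison; with that truncation the shell-by-shell bookkeeping becomes unnecessary.
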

	
	To prove Proposition \ref{T}, we need a couple of lemmas. Let $0<\beta\leq1$ and $\alpha>\beta$. By \eqref{eq2} and \eqref{eq5}, there exist two constants $C,C'>0$ such that
	\begin{align}\label{eq4}
		\begin{split}
			\sigma(B(\zeta,\sqrt{\alpha(1-r)}))\leq&C\alpha^{n}\sigma(B(\zeta,\sqrt{1-r}))\\
			\leq&CC'(\alpha/\beta)^{n}\sigma(B(\eta,\sqrt{\beta(1-r)})),
		\end{split}
	\end{align}
	for any $\zeta,\eta\in \SN$ and $0\leq r<1$. Denote by $c=CC'$. For $t>0$, define
	$$U_t=\{\zeta\in\SN: M(\chi_{O_t})(\zeta)>(2^{2n+1}c(\alpha/\beta)^{n})^{-1}\},$$
	where $O_t=\{\zeta\in\SN:T_\beta u(\zeta)>t\}$.

	\begin{lemma}\label{T1}
		Let $0<\beta\leq1$ and $\alpha>\beta$. Let $t>0$.
		\begin{itemize}
			\item[(i)] If $z=r\eta\in \cup_{\zeta\in\SN\setminus U_t}\Gamma_\alpha(\zeta)$, then
			$$\sigma(B(\eta,\sqrt{\beta (1-r)}))\leq 2\sigma(B(\eta,\sqrt{\beta (1-r)})\cap(\SN\setminus O_t));$$
			\item[(ii)] The following statement holds:
			$$\bigcup_{\zeta\in\SN\setminus U_t}\Gamma_\alpha(\zeta)\subset\bigcup_{\zeta\in\SN\setminus O_t}\Gamma_\beta(\zeta).$$
		\end{itemize}
	\end{lemma}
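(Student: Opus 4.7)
My plan is to establish (i) by transferring the density bound on $O_t$ from balls centered at $\zeta$ (which is what $\zeta \notin U_t$ gives us) to the ball $B(\eta,\sqrt{\beta(1-r)})$ centered at $\eta$, and then to deduce (ii) as a quick corollary.

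For (i), I would start by unpacking the hypothesis $z = r\eta \in \Gamma_\alpha(\zeta)$, which after taking square roots yields $d(\eta,\zeta) < \sqrt{\alpha(1-r)}$. The triangle inequality for $d$ on $\overline{\BN}$ then gives the chain of containments
\[ B(\eta,\sqrt{\beta(1-r)}) \subset B(\zeta,R) \subset B(\eta,2R), \qquad R := \sqrt{\alpha(1-r)} + \sqrt{\beta(1-r)}. \]
Using $\zeta \notin U_t$ at the ball $B(\zeta,R)$ produces
\[ \sigma\bigl(B(\eta,\sqrt{\beta(1-r)}) \cap O_t\bigr) \leq \sigma\bigl(B(\zeta,R)\cap O_t\bigr) \leq \frac{\sigma(B(\zeta,R))}{2c(\alpha/\beta)^n}. \]
The remaining step is to compare $\sigma(B(\zeta,R))$ with $\sigma(B(\eta,\sqrt{\beta(1-r)}))$: the second containment above, combined with the doubling property \eqref{eq5} and the comparison \eqref{eq4} applied at $\eta$ (and using $\beta \leq \alpha$ so that $2R \leq 4\sqrt{\alpha(1-r)}$), bounds $\sigma(B(\zeta,R)) \leq c(\alpha/\beta)^n \sigma(B(\eta,\sqrt{\beta(1-r)}))$, after absorbing the finitely many doubling constants into $c$. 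Chaining these estimates yields $\sigma(B(\eta,\sqrt{\beta(1-r)}) \cap O_t) \leq \tfrac{1}{2}\sigma(B(\eta,\sqrt{\beta(1-r)}))$, and writing the ball as the disjoint union of its intersections with $O_t$ and $\SN \setminus O_t$ rearranges to the stated inequality.

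Part (ii) then drops out of (i) essentially for free. Given $z = r\eta \in \Gamma_\alpha(\zeta)$ with $\zeta \notin U_t$, part (i) forces $B(\eta,\sqrt{\beta(1-r)}) \cap (\SN\setminus O_t)$ to carry at least half the (positive) measure of $B(\eta,\sqrt{\beta(1-r)})$, and so in particular to be nonempty. Pick any $\xi$ in this intersection: the condition $d(\eta,\xi) < \sqrt{\beta(1-r)}$ squares to $|1 - \langle \eta,\xi\rangle| < \beta(1-r)$, which is precisely $z = r\eta \in \Gamma_\beta(\xi)$; since $\xi \notin O_t$, this exhibits $z$ as an element of the right-hand side.

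The main obstacle is the bookkeeping of constants in part (i): the threshold $(2c(\alpha/\beta)^n)^{-1}$ in the definition of $U_t$ is essentially tight, so one must verify that $c$ can be chosen at the outset to absorb \emph{all} the doubling factors arising in passing between balls centered at $\zeta$ and at $\eta$. Concretely, one needs $\sigma(B(\zeta,R)) \leq c(\alpha/\beta)^n\sigma(B(\eta,\sqrt{\beta(1-r)}))$ with $R = \sqrt{\alpha(1-r)} + \sqrt{\beta(1-r)}$ whenever $d(\eta,\zeta) < \sqrt{\alpha(1-r)}$, which requires an iterated (but routine) application of \eqref{eq5} together with \eqref{eq4}. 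Everything else is purely algebraic.
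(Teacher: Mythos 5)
Your proof is correct and follows essentially the same route as the paper's: for (i), unpack the hypothesis into $\zeta \in B(\eta,\sqrt{\alpha(1-r)})$, transfer the density bound from $\zeta\notin U_t$ to the $\eta$-centered ball via containments and doubling, and for (ii), deduce nonemptiness from the measure lower bound and then translate $d(\eta,\xi)<\sqrt{\beta(1-r)}$ into $z\in\Gamma_\beta(\xi)$.

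One worthwhile observation on the constant issue you flag: your version is slightly more careful than the paper's at one point. You route the argument through the ball $B(\zeta,R)$, which is \emph{centered at $\zeta$}, before invoking $\zeta\notin U_t$; this is exactly what the paper's centered maximal function $M$ requires. The paper instead writes the middle inequality
$\frac{\sigma(B(\eta,\sqrt{\alpha(1-r)})\cap O_t)}{\sigma(B(\eta,\sqrt{\alpha(1-r)}))}\leq M(\chi_{O_t})(\zeta)$
directly, but this ball is centered at $\eta$, not $\zeta$, so making that step rigorous needs one more containment and one more doubling factor. In both your and the paper's argument, the constant $c$ in the definition of $U_t$ therefore needs to be chosen a bit larger than the $c=CC'$ coming only from \eqref{eq4}, so as to absorb the extra doubling factors arising from comparing $\zeta$- and $\eta$-centered balls; you correctly identify this in advance as the main bookkeeping obstacle, while the paper leaves it implicit. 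This enlargement is harmless for Proposition~\ref{T}, since the weak-$(1,1)$ bound $\sigma(U_t)\lesssim(\alpha/\beta)^n\sigma(O_t)$ holds regardless of the precise fixed constant in $U_t$.
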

	
	\begin{proof}
		(i) By definition, the assumption implies that there is $\zeta\in \SN\setminus U_t$ such that
		$$|1-\langle \eta,\zeta\rangle|<\alpha(1-r),$$
		which implies $\zeta\in B(\eta,\sqrt{\alpha(1-r)})$. Then, we have
		$$B(\eta,\sqrt{\alpha(1-r)})\subset B(\zeta,2\sqrt{\alpha(1-r)}).$$
		It then follows from \eqref{eq4} that
		\begin{align*}
			\frac{\sigma(B(\eta,\sqrt{\beta(1-r)})\cap O_t)}{B(\eta,\sqrt{\beta(1-r)})}
			\leq&\frac{c2^{2n}(\alpha/\beta)^{n}\sigma(B(\zeta,2\sqrt{\alpha(1-r)})\cap O_t)}{B(\zeta,2\sqrt{\alpha(1-r)})}\\
			\leq& c2^{2n}\frac{\alpha^n}{\beta^n}M(\chi_{O_t})(\zeta)\leq\frac{1}{2}.
		\end{align*}
		This gives (i).
		
		(ii) If $\bigcup_{\zeta\in\SN\setminus U_t}\Gamma_\alpha(\zeta)=\emptyset$, we are done. Otherwise, there exists $z=r\eta\in\bigcup_{\zeta\in\SN\setminus U_t}\Gamma_\alpha(\zeta)$, and by (i) we know that
		$$B(\eta,\sqrt{\beta (1-r)})\cap(\SN\setminus O_t)\neq\emptyset.$$
		Let $\zeta'\in [B(\eta,\sqrt{\beta (1-r)})\cap(\SN\setminus O_t)]\subset \SN\setminus O_t$. Then
		$$|1-\langle \eta,\zeta'\rangle|<\beta (1-r),$$
		which implies $z\in\Gamma_\beta(\zeta')$, as desired.
	\end{proof}

	\begin{lemma}\label{T2}
		Let $0<\beta\leq1$ and $\alpha>\beta$. Let $t>0$. Then  for any measurable function $u$ on $\BN$, one has
		$$\int_{\SN\setminus U_t}(T_\alpha u)^2d\sigma\lesssim\frac{\alpha^n}{\beta^n}\int_{\SN\setminus O_t}(T_\beta u)^2d\sigma.$$
	\end{lemma}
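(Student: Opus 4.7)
The plan is to rewrite both integrals via Fubini's theorem as integrals over $\BN$ against $d\nu$, and then to reduce the claim to a pointwise comparison of the sizes of certain nonisotropic balls on $\SN$. Recall that for any $z=r\eta \in \BN$ and $\alpha>0$, the ``shadow'' $\{\zeta\in\SN : z\in\Gamma_\alpha(\zeta)\}$ is exactly the nonisotropic ball $B(\eta,\sqrt{\alpha(1-r)})$. Applying Fubini's theorem to the definition \eqref{ACest8} of $T_\alpha$, one obtains
$$\int_{\SN\setminus U_t}(T_\alpha u)^2\, d\sigma \;=\; \int_{\BN} |u(z)|^2\, \sigma\bigl(B(\eta,\sqrt{\alpha(1-r)})\cap (\SN\setminus U_t)\bigr)\, d\nu(z),$$
and analogously for the right-hand side with $\alpha$, $U_t$ replaced by $\beta$, $O_t$.

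Hence the lemma will follow once we establish the pointwise estimate
$$\sigma\bigl(B(\eta,\sqrt{\alpha(1-r)})\cap(\SN\setminus U_t)\bigr) \;\lesssim\; \frac{\alpha^n}{\beta^n}\,\sigma\bigl(B(\eta,\sqrt{\beta(1-r)})\cap(\SN\setminus O_t)\bigr)$$
for every $z=r\eta\in\BN$. I would proceed by a dichotomy. If the left-hand set is empty, there is nothing to prove. Otherwise, there exists some $\zeta\in\SN\setminus U_t$ with $z\in\Gamma_\alpha(\zeta)$, which places $z$ inside $\bigcup_{\zeta\in\SN\setminus U_t}\Gamma_\alpha(\zeta)$, exactly the hypothesis needed to invoke Lemma \ref{T1}(i). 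That lemma then yields
$$\sigma\bigl(B(\eta,\sqrt{\beta(1-r)})\bigr) \;\leq\; 2\,\sigma\bigl(B(\eta,\sqrt{\beta(1-r)})\cap(\SN\setminus O_t)\bigr),$$
while the doubling-type estimate \eqref{eq4} provides
$$\sigma\bigl(B(\eta,\sqrt{\alpha(1-r)})\bigr) \;\leq\; c\,(\alpha/\beta)^n\,\sigma\bigl(B(\eta,\sqrt{\beta(1-r)})\bigr).$$
Chaining these two inequalities with the trivial bound $\sigma(B(\eta,\sqrt{\alpha(1-r)})\cap(\SN\setminus U_t))\le\sigma(B(\eta,\sqrt{\alpha(1-r)}))$ finishes the pointwise estimate and hence the lemma.

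The proof is essentially a Fubini/shadow argument, so I do not anticipate a genuine obstacle; the only point requiring care is the case analysis on whether $z$ lies in $\bigcup_{\zeta\in\SN\setminus U_t}\Gamma_\alpha(\zeta)$, since Lemma \ref{T1}(i) can only be applied on that set. Everything else is a direct application of two preparatory results already in hand, namely the doubling inequality \eqref{eq4} and Lemma \ref{T1}(i).
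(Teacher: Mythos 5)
Your proposal is correct and follows essentially the same route as the paper's proof: rewrite both sides by Fubini as integrals against $d\nu$ with the ``shadow'' measure $\sigma(B(\eta,\sqrt{\alpha(1-r)})\cap(\SN\setminus U_t))$ as the weight, then establish the pointwise comparison of these weights using \eqref{eq4} together with Lemma \ref{T1}(i). The only stylistic difference is that you integrate over all of $\BN$ and let the weight vanish off $\bigcup_{\zeta\in\SN\setminus U_t}\Gamma_\alpha(\zeta)$, which lets you bypass the explicit invocation of Lemma \ref{T1}(ii) that the paper uses to compare domains of integration; this is a minor and valid streamlining, not a materially different argument.
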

	
	\begin{proof}
		Let $E_\alpha(z)=\{\zeta\in\SN: z\in\Gamma_\alpha (\zeta)\}$ and $E_\beta(z)=\{\zeta\in\SN: z\in\Gamma_\beta (\zeta)\}$. Observe that
		$$\int_{\SN\setminus U_t}(T_\alpha u)^2d\sigma
		=\int_{\bigcup_{\zeta\in\SN\setminus U_t} \Gamma_\alpha(\zeta)}|u(z)|^2\sigma(E_\alpha(z)\cap(\SN\setminus U_t))d\nu(z)$$
		and
		$$\int_{\SN\setminus O_t}(T_\beta u)^2d\sigma
		=\int_{\bigcup_{\zeta\in\SN\setminus O_t} \Gamma_\beta(\zeta)}|u(z)|^2\sigma(E_\beta(z)\cap(\SN\setminus O_t))d\nu(z).$$
		Moreover, if we write
		$z=r\eta$ with $\eta\in\SN$ and $0\leq r<1$, then
		$E_\alpha(z)=B(\eta,\sqrt{\alpha(1-r)})$ and $E_\beta(z)=B(\eta,\sqrt{\beta(1-r)})$. On the other hand, by Lemma \ref{T1} (ii), 
		$$\bigcup_{\zeta\in\SN\setminus U_t}\Gamma_\alpha(\zeta)\subset\bigcup_{\zeta\in\SN\setminus O_t} \Gamma_\beta(\zeta)$$
		and by \eqref{eq4} and Lemma \ref{T1} (i),
		\begin{align*}
			\sigma(B(\eta,\sqrt{\alpha(1-r)})\cap(\SN\setminus U_t))
			\leq&\sigma(B(\eta,\sqrt{\alpha(1-r)}))\\
			\lesssim&(\alpha/\beta)^{n}\sigma(B(\eta,\sqrt{\beta(1-r)}))\\
			\lesssim&(\alpha/\beta)^{n}\sigma(B(\eta,\sqrt{\beta(1-r)})\cap(\SN\setminus O_t)).
		\end{align*}
		This yields the desired result.
	\end{proof}
	Now we are ready to prove Proposition \ref{T}.
	
	\begin{proof}[Proof of Proposition \ref{T}.]
		Fix $t>0$. Since $M$ is of weak type $(1,1)$, we have
		$$\sigma(U_t)\lesssim\frac{\alpha^{n}}{\beta^n}\sigma(O_t),$$
		which implies
		$$\sigma(\{\zeta\in\SN:T_\alpha u(\zeta)>t\})\lesssim\frac{\alpha^{n}}{\beta^n}\sigma(O_t)+\sigma(\{\zeta\in\SN\setminus U_t:T_\alpha u(\zeta)>t\}).$$
		Hence, it is enough to show
		\begin{align}\label{T3}
			\sigma(\{\zeta\in\SN\setminus U_t:T_\alpha u(\zeta)>t\})\lesssim\frac{\alpha^{n}}{\beta^nt}\|T_\beta u\|_{L^{1,\infty}}.
		\end{align} 
		By Chebyshev’s inequality and Lemma \ref{T2},
		\begin{align*}
			\sigma(\{\zeta\in\SN\setminus U_t:T_\alpha u(\zeta)>t\})
			\leq&\frac{1}{t^2}\int_{\SN\setminus U_t}(T_\alpha u)^2(\zeta)d\sigma(\zeta)\\
			\lesssim&\frac{\alpha^{n}}{\beta^n t^2}\int_{\SN\setminus O_t}(T_\beta u)^2(\zeta)d\sigma(\zeta)\\
			\leq&2\frac{\alpha^{n}}{\beta^n t^2}\int_{0}^{t}s\sigma(\{\zeta\in\SN:T_\beta u(\zeta)>s\})ds,
		\end{align*}
		where in the last inequality we use Fubini's theorem.
		
		By the definition of the $L^{1,\infty}$-norm, we finally get
		$$\frac{\alpha^{n}}{\beta^n t^2}\int_{0}^{t}s\sigma(\{\zeta\in\SN:T_\beta u(\zeta)>s\})ds\lesssim\frac{\alpha^{n}}{\beta^nt}\|T_\beta u\|_{L^{1,\infty}},$$
		which yields \eqref{T3}.
	\end{proof}

	As an application of Proposition \ref{T}, we can show Lemma \ref{ACest9}, which concerns the following two special cases of $T_\alpha$ defined in (\ref{ACest8}). More precisely, for $\alpha>0$, define for $\zeta\in\SN$
	$$T_{\alpha}^X(u)(\zeta)=\left(\int_{\Gamma_\alpha(\zeta)}
	\left|Xu(z)\right|^2\frac{dv(z)}{(1-|z|^2)^{n-1}}\right)^{1/2} \quad \text{if} \quad u\in \mathcal{C}^1(\BN),$$
	and
	$$T_{\alpha}^\mu(u)(\zeta)=\left(\int_{\Gamma_\alpha(\zeta)}
	\left|u(z)\right|^2\frac{d\mu(z)}{(1-|z|^2)^{n}}\right)^{1/2} \quad \text{if} \quad u \ \text{is measurable on} \ \BN.$$

	\begin{lemma}\label{ACest9}
		For any measurable function $u$ on $\BN$, one has
		\begin{itemize}
			\item[(i)] $\|T_{\alpha}^\mu(u)\|_{L^{1,\infty}}\lesssim\alpha^n\|S_4^\mu(u)\|_{L^{1,\infty}}$, where $\alpha\geq1$;
			\item[(ii)] $S_\alpha^\mu(u)(\zeta)\leq T_{(1+\sqrt{2\alpha})^2}^\mu(u)(\zeta)$, where $\alpha>1/2$.
		\end{itemize}
		For any $u\in \mathcal{C}^1(\BN)$, one has
		\begin{itemize}
			\item[(iii)] $\|T_{\alpha}^X(u)\|_{L^{1,\infty}}\lesssim\alpha^n\|S_4^X(u)\|_{L^{1,\infty}}$, where $\alpha\geq1$;
			\item[(iv)] $S_\alpha^X(u)(\zeta)\leq T_{(1+\sqrt{2\alpha})^2}^X(u)(\zeta)$, where $\alpha>1/2$.
		\end{itemize}
	\end{lemma}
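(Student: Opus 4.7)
Parts (ii) and (iv) reduce to a single geometric fact: the inclusion $D_\alpha(\zeta)\subset\Gamma_{(1+\sqrt{2\alpha})^2}(\zeta)$. Given $z\in D_\alpha(\zeta)$, I would write $z=r\eta$ with $\eta\in\SN$ and $0\leq r<1$, so that $d(\eta,z)=|1-\langle\eta,r\eta\rangle|^{1/2}=(1-r)^{1/2}$ and $d(z,\zeta)<\sqrt{\alpha(1-|z|^2)}\leq\sqrt{2\alpha(1-r)}$ (using $1-|z|^2=(1-r)(1+r)\leq 2(1-r)$). The triangle inequality for the metric $d$ then yields
$$d(\eta,\zeta)\leq d(\eta,z)+d(z,\zeta)<(1+\sqrt{2\alpha})(1-r)^{1/2},$$
so that $|1-\langle\eta,\zeta\rangle|<(1+\sqrt{2\alpha})^2(1-r)$ and $z\in\Gamma_{(1+\sqrt{2\alpha})^2}(\zeta)$. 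Since the integrands and the measures $(1-|z|^2)^{-(n-1)}dv$ for the $X$-case and $(1-|z|^2)^{-n}d\mu$ for the $\mu$-case match exactly those defining $T^X_{(1+\sqrt{2\alpha})^2}$ and $T^\mu_{(1+\sqrt{2\alpha})^2}$, the pointwise bounds (ii) and (iv) follow at once.

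For (i) and (iii) I would invoke Proposition \ref{T}, which reduces the claim to bounding $T^\mu_1$ and $T^X_1$ pointwise by $S^\mu_4$ and $S^X_4$. For this, I establish the reverse inclusion $\Gamma_1(\zeta)\subset D_1(\zeta)$: if $z=r\eta\in\Gamma_1(\zeta)$, then $|1-\langle\eta,\zeta\rangle|<1-r$, so
$$|1-\langle z,\zeta\rangle|=|(1-r)+r(1-\langle\eta,\zeta\rangle)|\leq(1-r)+r|1-\langle\eta,\zeta\rangle|<(1-r)(1+r)=1-|z|^2,$$
which gives $z\in D_1(\zeta)\subset D_4(\zeta)$. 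As the defining measures again coincide, this yields the pointwise estimates $T^\mu_1(u)(\zeta)\leq S^\mu_4(u)(\zeta)$ and $T^X_1(u)(\zeta)\leq S^X_4(u)(\zeta)$. In particular, the case $\alpha=1$ of (i) and (iii) is immediate. For $\alpha>1$, I would apply Proposition \ref{T} with $\beta=1$, using $\nu=(1-|z|^2)^{-n}d\mu$ together with the function $u$ for (i), and $\nu=(1-|z|^2)^{-(n-1)}dv$ together with the function $Xu$ for (iii), to obtain
$$\|T^\mu_\alpha u\|_{L^{1,\infty}}\lesssim\alpha^{n}\|T^\mu_1 u\|_{L^{1,\infty}}\leq\alpha^{n}\|S^\mu_4(u)\|_{L^{1,\infty}},$$
and analogously for $T^X_\alpha$.

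\textbf{Main obstacle.} The argument is essentially bookkeeping of cone inclusions combined with the already-established Proposition \ref{T}, so there is no substantial obstacle. The only point requiring a little attention is keeping track of the strict versus non-strict inequalities in the definitions of $D_\alpha(\zeta)$ and $\Gamma_\alpha(\zeta)$, but the triangle inequality preserves strict inequality on the side carrying the strict term, so this is handled without extra work.
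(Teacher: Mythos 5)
Your proof is correct and takes essentially the same approach as the paper: prove the cone inclusions $D_\alpha(\zeta)\subset\Gamma_{(1+\sqrt{2\alpha})^2}(\zeta)$ and $\Gamma_1(\zeta)\subset D_4(\zeta)$, and invoke Proposition \ref{T} with $\beta=1$ to handle $\alpha>1$. The only (inessential) difference is that you establish the sharper inclusion $\Gamma_1(\zeta)\subset D_1(\zeta)$ by direct computation of $|1-\langle z,\zeta\rangle|$, whereas the paper applies the triangle inequality for $d$ to obtain $\Gamma_1(\zeta)\subset D_4(\zeta)$; both suffice.
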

	\begin{proof}
		At first, we prove (i) and (iii). Let $\alpha\geq1$. For $z\in\BN$, write $z=r\eta$ with $\eta\in\SN$ and $0\leq r<1$. If $z=r\eta\in\Gamma_1(\zeta)$, then by the triangle inequality
		$$d(r\eta,\zeta)\leq d(r\eta,\eta)+d(\eta,\zeta)<\sqrt{1-r}+\sqrt{1-r}\leq2\sqrt{1-|z|^2},$$
		which implies
		$\Gamma_{1}(\zeta)\subset D_{4}(\zeta).$
		Combining with Proposition \ref{T}, we find
		$$\|T_{\alpha}^X(u)\|_{L^{1,\infty}}\lesssim\alpha^n\|T_1^X(u)\|_{L^{1,\infty}}\leq\alpha^n\|S_4^X(u)\|_{L^{1,\infty}}$$
		and
		$$\|T_{\alpha}^\mu(u)\|_{L^{1,\infty}}\lesssim\alpha^n\|T_1^\mu(u)\|_{L^{1,\infty}}\leq\alpha^n\|S_4^\mu(u)\|_{L^{1,\infty}}.$$
		
		Then we show (ii) and (iv). Let $\alpha>1/2$. For $z=r\eta\in D_\alpha(\zeta)$, it follows from the triangle inequality that
		$$d(\eta,\zeta)\leq d(\eta,z)+d(z,\zeta)<\sqrt{1-r}+\sqrt{\alpha(1-r^2)}\leq(1+\sqrt{2\alpha})\sqrt{1-r},$$
		which implies $D_\alpha(\zeta)\subset\Gamma_{(1+\sqrt{2\alpha})^2}(\zeta)$. Therefore, for any $\zeta\in\SN$
		$$S_\alpha^X(u)(\zeta)\leq T_{(1+\sqrt{2\alpha})^2}^X(u)(\zeta)\quad\text{and}\quad S_\alpha^\mu(u)(\zeta)\leq T_{(1+\sqrt{2\alpha})^2}^\mu(u)(\zeta).$$
		This completes the proof. 
	\end{proof}

	With Lemma \ref{Weak} and Lemma \ref{ACest9} in hand, we prove Theorem \ref{Weak7}.
	\begin{proof}[Proof of Theorem \ref{Weak7}]
		Let $A\in\{X,\mu\}$. Observe that $$\BN=\bigcup_{k=1}^\infty( D_{4^{k+1}}(\zeta)\setminus D_{4^k}(\zeta))\cup D_{4}(\zeta)$$
		for any $\zeta\in\SN$. Since $4^k(1-|z|^2)\leq|1-\langle z,\zeta\rangle|$ when $z\in D_{4^{k+1}}(\zeta)\setminus D_{4^k}(\zeta)$ and $1-|z|^2\lesssim|1-\langle z,\zeta\rangle|$, one gets
		$$\g^*_{A,\lambda}(P[f])(\zeta)^2\lesssim\sum_{k=1}^{\infty}\frac{1}{4^{\lambda n(k-1)}}S^A_{4^k}(P[f])(\zeta)^2.$$
		Therefore, we have
		$$\g^*_{A,\lambda}(P[f])(\zeta)\lesssim\left(\sum_{k=1}^{\infty}\frac{1}{4^{\lambda n(k-1)}}S^A_{4^k}(P[f])(\zeta)^2\right)^{1/2}\leq\sum_{k=1}^{\infty}\frac{1}{4^{\lambda n(k-1)/2}}S^A_{4^k}(P[f])(\zeta).$$
		By Lemma \ref{ACest9} (ii) and (iv), 
		\begin{align}\label{T31}
			\g^*_{A,\lambda}(P[f])(\zeta)\lesssim\sum_{k=1}^{\infty}\frac{1}{4^{\lambda n(k-1)/2}}T^A_{4^{k+2}}(P[f])(\zeta).
		\end{align} 
		On the other hand, using the quasi-triangle inequality and Lemma \ref{ACest9} (i) and (iii), we see that for every $N\in\mathbb{N}$,
		\begin{align*}
			\left\|\sum_{k=1}^{N}\frac{1}{4^{\lambda n(k-1)/2}}T^A_{4^{k+2}}(P[f])\right\|_{L^{1,\infty}}
			\leq&\sum_{k=1}^{N}\frac{2^k}{4^{\lambda n(k-1)/2}}\|T^A_{4^{k+2}}(P[f])\|_{L^{1,\infty}}\\
			\lesssim&\sum_{k=1}^{\infty}\frac{2^k4^{nk}}{4^{\lambda n(k-1)/2}}\|S^A_{4}(P[f])\|_{L^{1,\infty}}\\
			\lesssim&\|S^A_{4}(P[f])\|_{L^{1,\infty}},
		\end{align*}
		where the last inequality holds since $\lambda\geq4$. For $t>0$, since 
		\begin{align*}
			&\sigma\left(\left\{\zeta\in\SN:\sum_{k=1}^{\infty}\frac{1}{4^{\lambda n(k-1)/2}}T^A_{4^{k+2}}(P[f])(\zeta)>t\right\}\right)\\
			=&\lim_{N\to\infty}\sigma\left(\left\{\zeta\in\SN:\sum_{k=1}^{N}\frac{1}{4^{\lambda n(k-1)/2}}T^A_{4^{k+2}}(P[f])(\zeta)>t\right\}\right),
		\end{align*}
		we have
		$$t\sigma\left(\left\{\zeta\in\SN:\sum_{k=1}^{\infty}\frac{1}{4^{\lambda n(k-1)/2}}T^A_{4^{k+2}}(P[f])(\zeta)>t\right\}\right)\lesssim\|S^A_{4}(P[f])\|_{L^{1,\infty}}.$$
		This implies by Lemma \ref{Weak} that
		$$\|\g^*_{A,\lambda}(P[f])\|_{L^{1,\infty}}\lesssim\|S^A_{4}(P[f])\|_{L^{1,\infty}}\lesssim\|f\|_{L^1}.$$
		The proof is completed.
	\end{proof}
	
	\bigskip

	\section{Some more estimates for $\g^*_\lambda$}\label{keyestimate}
	This section is devoted to some preparatory work for $\g^*_\lambda$  in order to finish the proofs of Lemma \ref{Rd1} and Lemma \ref{Rd3} in the next section. In the sequel, we fix a dyadic system $\mathscr{D}$ on $\SN$ with the parameters $C_0\geq c_0>0$, $r\in(0,1)$ and $12 C_0r\leq c_0$ (see Theorem \ref{de}). Let $c_1=c_0/3$ and $C_1=2C_0$. Fix $m_0\in\mathbb{Z}$ such that $\SN=Q_1^{m_0}\in\mathscr{D}$. Denote by $Q^0:=Q_1^{m_0}$.
	
	Given $Q\in\mathscr{D}(Q^0)$ with ``center" $\zeta_Q$ and ``radius" $r_Q$, let 
	$$E_{Q}:=\{z\in\BN:d(\zeta_Q,z)<2C_1r_Q\}$$
	and $E^c_{Q}=\BN\setminus E_Q$ be the complement of $E_Q$.
	For $u\in\mathcal{C}^1(\BN)$, define
	$$\g^*_{E_Q,X,\lambda}(u)(\zeta)=\left(\int_{E_Q}|Xu(z)|^2\left(\frac{1-|z|^2}{|1-\langle z,\zeta\rangle|}\right)^{\lambda n}\frac{dv(z)}{(1-|z|^2)^{n-1}}\right)^{1/2},\quad \forall  \zeta\in\SN$$
	and
	$$\g^*_{E^c_Q,X,\lambda}(u)(\zeta)=\left(\int_{E^c_Q}|Xu(z)|^2\left(\frac{1-|z|^2}{|1-\langle z,\zeta\rangle|}\right)^{\lambda n}\frac{dv(z)}{(1-|z|^2)^{n-1}}\right)^{1/2},\quad \forall  \zeta\in\SN.$$
	Accordingly, $\g^*_{E_Q,\mu,\lambda}$ and $\g^*_{E^c_Q,\mu,\lambda}$ are defined in the same way, that is $\g^*_{E_Q,\mu,\lambda}:=\g^*_{\chi_{E_Q}d\mu,\lambda}$ and $\g^*_{E^c_Q,\mu,\lambda}:=\g^*_{\chi_{E^c_Q}d\mu,\lambda}$. In the sequel, we use the following notation $$\g^*_{E_Q,\lambda}\in\{\g^*_{E_Q,X,\lambda},\g^*_{E_Q,\mu,\lambda}\} \ \ \text{and} \ \ \g^*_{E^c_Q,\lambda}\in\{\g^*_{E^c_Q,X,\lambda},\g^*_{E^c_Q,\mu,\lambda}\}.$$
	In the following, we will study some key estimates for $\g^*_{E_Q,\lambda}$ and $\g^*_{E^c_Q,\lambda}$ that will be crucial in the next section. Recall that in Definition \ref{def2.8}, for every $\delta>1$, the ``dilated" set of $Q$ is given by
	$$\delta Q=B(\zeta_Q,\delta C_1 r_Q). $$
	At first, we present a basic fact, which will be frequently used in this section. It follows from \eqref{eq2} and \eqref{eq5} that for any $\delta\geq1$,
	\begin{align}\label{Poisson2}
		\sigma(\delta Q)\lesssim\delta^{2n}\sigma(B(\zeta_Q,r_Q))\lesssim(\delta r_Q)^{2n}.
	\end{align}

	\begin{lemma}\label{P2}
		Let $\lambda\in\mathbb{N}$ with $\lambda\geq4$. Then for any $f\in L^1$,
		$$\|(\g^*_{E_Q,\lambda}(P[f\chi_{\SN\setminus8Q}]))^2\|_{L^1}\lesssim\sigma(Q)\sum_{j=1}^{\infty}\frac{1}{2^{2nj}}\left(\frac{1}{\sigma(2^j Q)}\int_{2^j Q}|f|d\sigma\right)^2.$$
	\end{lemma}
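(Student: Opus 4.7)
The proof will proceed by first applying Fubini to put the $\sigma$-integral innermost, so that the outer integral in $z$ is over $E_Q$. Concretely, for the $X$-version,
\begin{align*}
\|(\g^*_{E_Q,X,\lambda}(P[f\chi_{\SN\setminus 8Q}]))^2\|_{L^1}=\int_{E_Q}|XP[f\chi_{\SN\setminus 8Q}](z)|^2\,(1-|z|^2)^{\lambda n-n+1}\!\!\int_{\SN}\frac{d\sigma(\zeta)}{|1-\langle z,\zeta\rangle|^{\lambda n}}\,dv(z),
\end{align*}
and the standard Forelli--Rudin estimate (since $\lambda n>n$) bounds the inner integral by $(1-|z|^2)^{n-\lambda n}$, collapsing the $z$-integrand to $|XP[f\chi_{\SN\setminus 8Q}](z)|^2(1-|z|^2)\,dv(z)$. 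An identical calculation in the $\mu$-case collapses the weight to $d\mu(z)$ outright. So the task reduces to controlling $\int_{E_Q}|XP[f\chi_{\SN\setminus 8Q}](z)|^2(1-|z|^2)\,dv(z)$ and $\int_{E_Q}|P[f\chi_{\SN\setminus 8Q}](z)|^2\,d\mu(z)$.

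Next I would decompose $\SN\setminus 8Q=\bigcup_{j\geq 3}(2^{j+1}Q\setminus 2^jQ)$ and, for $z\in E_Q$ and $\xi\in 2^{j+1}Q\setminus 2^jQ$, use the triangle inequality for $d$ to get $d(z,\xi)\geq d(\xi,\zeta_Q)-d(\zeta_Q,z)\geq 2^{j-1}C_1r_Q$, hence $|1-\langle z,\xi\rangle|\gtrsim (2^j r_Q)^2$. Combined with Lemma~\ref{smooth3}(i) ($|XP(z,\xi)|\lesssim(1-|z|^2)^{n-1}|1-\langle z,\xi\rangle|^{-2n}$) and the analogous size bound for $P(z,\xi)$, together with $\sigma(2^{j+1}Q)\lesssim(2^jr_Q)^{2n}$ from \eqref{Poisson2}, this gives the pointwise estimates
\begin{align*}
|XP[f\chi_{\SN\setminus 8Q}](z)|\lesssim\sum_{j\geq 3}\frac{(1-|z|^2)^{n-1}}{(2^jr_Q)^{2n}}A_{j+1},\qquad |P[f\chi_{\SN\setminus 8Q}](z)|\lesssim\sum_{j\geq 3}\frac{(1-|z|^2)^{n}}{(2^jr_Q)^{2n}}A_{j+1},
\end{align*}
where $A_j:=\sigma(2^jQ)^{-1}\int_{2^jQ}|f|d\sigma$.

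Squaring by Cauchy--Schwarz against the weights $\{2^{-2nj}\}$ introduces an absolute constant together with the desired factor $\sum_j 2^{-2nj}A_{j+1}^2$ and a leftover $r_Q^{-4n}$. It remains to absorb the integrals $\int_{E_Q}(1-|z|^2)^{2n-1}dv(z)$ and $\int_{E_Q}(1-|z|^2)^{2n}d\mu(z)$. For these I use the basic fact that $1-|z|^2\lesssim r_Q^2$ throughout $E_Q$ (since $1-|z|\leq|1-\langle z,\zeta_Q\rangle|\lesssim r_Q^2$), together with $v(E_Q)\lesssim r_Q^{2n+2}$ (from the preliminaries) and the Carleson bound $\mu(E_Q)\lesssim r_Q^{2n}$; both integrals are then $\lesssim r_Q^{6n}$. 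Dividing by $r_Q^{4n}$ leaves $r_Q^{2n}\asymp\sigma(Q)$, which yields the stated bound after reindexing $j$.

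The main technical point I expect to be fiddly is balancing the powers of $1-|z|^2$, $r_Q$, and the ball measures so that the exponents match exactly on both the $X$-side and the $\mu$-side; once the pointwise kernel bounds and the triangle inequality for $d$ are combined cleanly, the remainder is bookkeeping. The use of Cauchy--Schwarz against a geometric weight is crucial in order to produce the squared averages $A_j^2$ with a summable prefactor $2^{-2nj}$.
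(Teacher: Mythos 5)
Your proof is correct and follows essentially the same route as the paper's: a Fubini/Forelli--Rudin reduction to an integral over $E_Q$, annular decomposition of $\S_n\setminus 8Q$ together with the size bound from Lemma~\ref{smooth3}(i) and the triangle inequality for $d$, Cauchy--Schwarz against a geometric weight to produce $\sum_j 2^{-2nj}A_j^2$, and finally the volume bound $v(E_Q)\lesssim r_Q^2\sigma(Q)$ (resp.~the Carleson bound $\mu(E_Q)\lesssim\sigma(Q)$). The only differences are cosmetic (the paper keeps the factor $(1-|z|^2)^{1/2}$ inside an auxiliary function $I(z)$ rather than separating it out, and its annuli $2^{j+3}Q\setminus 2^{j+2}Q$ are indexed slightly differently).
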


	\begin{proof}
		We show this lemma for $\g^*_{E_Q,X,\lambda},\g^*_{E_Q,\mu,\lambda}$ respectively. We proceed with the proof of $\g^*_{E_Q,X,\lambda}$. Firstly, assume $\g^*_{E_Q,\lambda}=\g^*_{E_Q,X,\lambda}$. For $z\in E_Q$, define
		$$I(z)=(1-|z|^2)^{1/2}XP[f\chi_{\SN\setminus8Q}](z).$$
		Then applying Fubini's theorem, we have
		\begin{align*}
			&\|(\g^*_{E_Q,X,\lambda}(P[f\chi_{\SN\setminus8Q}]))^2\|_{L^1}\\
			=&\int_{\SN}\int_{E_Q}|XP[f\chi_{\SN\setminus8Q}](z)|^2\left(\frac{1-|z|^2}{|1-\langle z,\xi\rangle|}\right)^{\lambda n}\frac{dv(z)}{(1-|z|^2)^{n-1}}d\sigma(\xi)\\
			=&\int_{\SN}\int_{E_Q}I(z)^2\frac{(1-|z|^2)^{\lambda n-n}}{|1-\langle z,\xi\rangle|^{\lambda n}}dv(z)d\sigma(\xi)\\
			=&\int_{E_Q}\int_{\SN}\frac{(1-|z|^2)^{\lambda n-n}}{|1-\langle z,\xi\rangle|^{\lambda n}}d\sigma(\xi)I(z)^2dv(z)\\
			\lesssim&\int_{E_Q}I(z)^2dv(z),
		\end{align*}
		where the last inequality follows from the fact that for $\lambda\geq4$, 
		\begin{align}\label{Poisson1}
			\int_{\SN}\frac{(1-|z|^2)^{\lambda n-n}}{|1-\langle z,\xi\rangle|^{\lambda n}}d\sigma(\xi)\lesssim P[1](z)=1.
		\end{align}
		Furthermore, by Lemma \ref{smooth3}, one has  for each $z\in E_Q$, 
		$$|I(z)|
		\lesssim\sum_{j=1}^{\infty}\int_{2^{j+3}Q \setminus2^{j+2}Q}\frac{(1-|z|^2)^{n-1/2}}{|1-\langle z,\xi\rangle|^{2n}}|f(\xi)|d\sigma(\xi).$$
		Observe that  $1-|z|^2\leq2d(\zeta_Q,z)^2\lesssim r_Q^2$ and $d(\xi,z)\geq d(\xi,\zeta_Q)-d(\zeta_Q,z)\gtrsim2^{j}r_Q$ whenever $z\in E_Q$ and $\xi\in2^{j+3}Q \setminus2^{j+2}Q$ with $j\in\mathbb{N}$. Consequently, by \eqref{Poisson2}, we obtain
		\begin{align}\label{Poisson3}
			|I(z)|\lesssim\sum_{j=1}^{\infty}\frac{r_Q^{2(n-1/2)}}{(2^jr_Q)^{4n}}\int_{2^{j+3}Q}|f|d\sigma
			\lesssim\sum_{j=1}^{\infty}\frac{1}{2^{2nj}r_Q\sigma(2^j Q)}\int_{2^jQ}|f|d\sigma.
		\end{align}
		Finally, by (\ref{Poisson3}), H\"older’s inequality and $v(E_Q)\lesssim r^2_Q\sigma(Q)$, we obtain
		\begin{align*}
			\int_{E_Q}I(z)^2dv(z)
			\lesssim&\int_{E_Q}\left(\sum_{j=1}^{\infty}\frac{1}{2^{2nj}r_Q\sigma(2^j Q)}\int_{2^jQ}|f|d\sigma\right)^2dv(z)\\
			\lesssim&\int_{E_Q}\sum_{j=1}^{\infty}\frac{1}{2^{2nj}r_Q^2}\left(\frac{1}{\sigma(2^jQ)}\int_{2^jQ}|f|d\sigma\right)^2dv(z)\\
			\lesssim&\sigma(Q)\sum_{j=1}^{\infty}\frac{1}{2^{2nj}}\left(\frac{1}{\sigma(2^jQ)}\int_{2^jQ }|f|d\sigma\right)^2.
		\end{align*} 
		This implies the desired result for $\g^*_{E_Q,\lambda}=\g^*_{E_Q,X,\lambda}$. 
		
		We come to the proof of $\g^*_{E_Q,\lambda}=\g^*_{E_Q,\mu,\lambda}$.
		The argument is quite similar as in the previous case. Let us sketch the proof. Using \eqref{Poisson1} and Fubini’s theorem, we have
		$$\|(\g^*_{E_Q,\mu,\lambda}(P[f\chi_{\SN\setminus8Q}]))^2\|_{L^1}\lesssim\int_{E_Q}\big|P[f\chi_{\SN\setminus8Q}](z))\big|^2d\mu(z).$$
		Note that for each $z\in E_Q$, 
		$$|P[f\chi_{\SN\setminus8Q}](z))|
		\lesssim\sum_{j=1}^{\infty}\int_{2^{j+3}Q \setminus2^{j+2}Q}\frac{(1-|z|^2)^n}{|1-\langle z,\xi\rangle|^{2n}}|f(\xi)|d\sigma(\xi).$$
		Then using the same arguemnt as in (\ref{Poisson3}), we get
		$$|P[f\chi_{\SN\setminus8Q}](z))|\lesssim\sum_{j=1}^{\infty}\frac{r_Q^{2n}}{(2^jr_Q)^{4n}}\int_{2^{j}Q}|f|d\sigma
		\lesssim\sum_{j=1}^{\infty}\frac{1}{2^{2nj}\sigma(2^j Q)}\int_{2^{j+3}Q}|f|d\sigma.$$
		On the other hand, by the definition of Carleson measure, one has $\mu(E_Q)\lesssim\sigma(Q)$. Therefore, combining these observations and applying H\"older’s inequality, we deduce that
		$$\int_{E_Q}\big|P[f\chi_{\SN\setminus8Q}](z))\big|^2d\mu(z)\lesssim\sigma(Q)\sum_{j=1}^{\infty}\frac{1}{2^{2nj}}\left(\frac{1}{\sigma(2^jQ)}\int_{2^jQ }|f|d\sigma\right)^2.$$
		This finishes the proof.
	\end{proof}

	Before dealing with $\g^*_{E^c_{Q},\lambda}$, we need the following lemma. For each $k\in\mathbb{N}$, define
	\begin{align}\label{Poisson6}
		F_k=\{z\in\BN:2^kC_1r_Q\leq d(\zeta_Q,z)<2^{k+1}C_1r_Q\}.
	\end{align}
	\begin{lemma}\label{Poisson7}
		Let $k\in\mathbb{N}$. Then for each $s\in\{1,2,\cdots,\lambda-1\}$, $l\in\{-1,-1/4,0,1\}$, $\zeta\in Q $ and $z\in F_k$, we have
		$$(1-|z|^2)^{\lambda n-sn-l}|K_{\lambda n}(z,\zeta_Q)-K_{\lambda n}(z,\zeta)|\lesssim\frac{r_Q}{(2^kr_Q)^{2(sn+l)+1}}.$$
	\end{lemma}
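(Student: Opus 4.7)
The plan is to expand the difference $|K_{\lambda n}(z,\zeta_Q) - K_{\lambda n}(z,\zeta)|$ using Lemma \ref{smooth1}(ii) with $\ell = \lambda n$, estimate each factor by the nonisotropic distances, and then combine with the size of $1-|z|^2$ on $F_k$. Explicitly, the identity from Lemma \ref{smooth1}(ii) yields
$$|K_{\lambda n}(z,\zeta_Q) - K_{\lambda n}(z,\zeta)| \leq |\langle z,\zeta_Q\rangle - \langle z,\zeta\rangle| \sum_{i=0}^{\lambda n - 1} |K_{i+1}(z,\zeta_Q)| \cdot |K_{\lambda n - i}(z,\zeta)|,$$
so I need sharp control of the three ingredients: $|1-\langle z,\zeta_Q\rangle|$, $|1-\langle z,\zeta\rangle|$, and $|\langle z,\zeta_Q\rangle - \langle z,\zeta\rangle|$.

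I would first record the basic geometric facts on $F_k$. Since $\zeta \in Q \subset B(\zeta_Q,C_1 r_Q)$ we have $d(\zeta_Q,\zeta) \leq C_1 r_Q$, so combining with the definition \eqref{Poisson6} of $F_k$ and the triangle inequality, for $k\geq 1$,
$$2^k C_1 r_Q \leq d(z,\zeta_Q) < 2^{k+1}C_1 r_Q, \qquad 2^{k-1}C_1 r_Q \leq d(z,\zeta) \leq 2^{k+2}C_1 r_Q,$$
so $d(z,\zeta_Q) \asymp d(z,\zeta) \asymp 2^k r_Q$. Hence $|K_{i+1}(z,\zeta_Q)| \lesssim (2^k r_Q)^{-2(i+1)}$ and $|K_{\lambda n - i}(z,\zeta)| \lesssim (2^k r_Q)^{-2(\lambda n - i)}$, so each of the $\lambda n$ summands is dominated by $(2^k r_Q)^{-2\lambda n - 2}$. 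Next, applying Lemma \ref{smooth1}(i) with $d(\zeta_Q,\zeta) \lesssim r_Q$ and $d(z,\zeta) \lesssim 2^k r_Q$ gives
$$|\langle z,\zeta_Q\rangle - \langle z,\zeta\rangle| \lesssim 2^k r_Q \cdot r_Q + r_Q^2 \lesssim 2^k r_Q^2.$$
Assembling these,
$$|K_{\lambda n}(z,\zeta_Q) - K_{\lambda n}(z,\zeta)| \lesssim \frac{2^k r_Q^2}{(2^k r_Q)^{2\lambda n + 2}}.$$

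Finally, for $z \in F_k$ we have $1-|z|^2 \leq 2|1-\langle z,\zeta_Q\rangle| = 2 d(z,\zeta_Q)^2 \lesssim (2^k r_Q)^2$. The ranges $s \in \{1,\ldots,\lambda - 1\}$ and $l \in \{-1,-1/4,0,1\}$ guarantee $\lambda n - sn - l \geq n - 1 \geq 0$, so $(1-|z|^2)^{\lambda n - sn - l} \lesssim (2^k r_Q)^{2(\lambda n - sn - l)}$. Multiplying,
$$(1-|z|^2)^{\lambda n - sn - l}|K_{\lambda n}(z,\zeta_Q) - K_{\lambda n}(z,\zeta)| \lesssim \frac{2^k r_Q^2}{(2^k r_Q)^{2(sn+l)+2}} = \frac{r_Q}{(2^k r_Q)^{2(sn+l)+1}},$$
which is the desired bound. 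There is no real obstacle here beyond careful exponent bookkeeping; the only point to double-check is that $\lambda n - sn - l \geq 0$ so that the estimate $1-|z|^2 \lesssim (2^k r_Q)^2$ is used in the correct direction, and that the triangle inequality on $F_k$ gives both upper and lower bounds of the same order $2^k r_Q$ on $d(z,\zeta)$, so that both factors $|K_{i+1}|$ and $|K_{\lambda n - i}|$ can be treated uniformly.
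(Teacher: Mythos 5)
Your proof is correct and follows essentially the same route as the paper: expand via Lemma \ref{smooth1}(ii), control all three ingredients by $d(z,\zeta_Q)\asymp d(z,\zeta)\asymp 2^k r_Q$ and $d(\zeta,\zeta_Q)\lesssim r_Q$, then use $1-|z|^2\lesssim (2^kr_Q)^2$ together with $\lambda n - sn - l \geq 0$. The only stylistic difference is that the paper substitutes the estimate from Lemma \ref{smooth1}(i) directly into the summand rather than first deriving a clean bound for $|\langle z,\zeta_Q\rangle-\langle z,\zeta\rangle|$, but the exponent bookkeeping is identical.
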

	\begin{proof}
		Let $s\in\{1,2,\cdots,\lambda-1\}$, $l\in\{-1,-1/4,0,1\}$, $\zeta\in Q $ and $z\in F_k$.
		Observe that 
		$$d(z,\zeta)\geq d(z,\zeta_Q)-d(\zeta_Q,\zeta )\gtrsim2^kr_Q\quad \mbox{and}\quad d(\zeta,\zeta_Q)\lesssim r_Q\lesssim d(\zeta ,z).$$
		Hence, by Lemma \ref{smooth1}, we have
		\begin{align*}
			\nonumber &(1-|z|^2)^{\lambda n-sn-l}|K_{\lambda n}(z,\zeta_Q)-K_{\lambda n}(z,\zeta)|\\
			\leq&(1-|z|^2)^{\lambda n-sn-l}\sum_{k=0}^{\lambda n-1}\frac{2|1-\langle z,\zeta\rangle|^{1/2}|1-\langle \zeta_Q,\zeta\rangle|^{1/2}+|1-\langle \zeta_Q,\zeta\rangle|}{|1-\langle z,\zeta\rangle|^{k+1}|1-\langle z,\zeta_Q\rangle|^{\lambda n-k}}\\
			\nonumber\lesssim&(1-|z|^2)^{\lambda n-sn-l}\sum_{k=0}^{\lambda n-1}\frac{|1-\langle z,\zeta\rangle|^{1/2}r_Q}{|1-\langle z,\zeta\rangle|^{k+1}|1-\langle z,\zeta_Q\rangle|^{\lambda n-k}}\\
			\nonumber \lesssim&\frac{r_Q}{(2^kr_Q)^{2(sn+l)+1}},
		\end{align*}
		as required.
	\end{proof}
	
	Now we deal with $\g^*_{E^c_{Q},\lambda}$.
	\begin{lemma}\label{P3}
		Let $\lambda\in\mathbb{N}$ with $\lambda\geq4$. Then for any $f\in L^1$,
		\begin{align*}
			\sup_{\zeta\in Q}\left|[\g^*_{E^c_{Q},\lambda}(P[f])(\zeta)]^2-[\g^*_{E^c_{Q},\lambda}(P[f])(\zeta_Q)]^2\right|
			\lesssim\sum_{j=1}^{\infty}\frac{1}{2^{j/4}}\left(\frac{1}{\sigma(2^jQ)}\int_{2^jQ}|f|d\sigma\right)^2.
		\end{align*}
	\end{lemma}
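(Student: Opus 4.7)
The first step is to express the difference of squares as a single integral. For the $X$ case (the $\mu$ case is analogous) we have, for any $\zeta\in Q$,
\begin{align*}
[\g^*_{E^c_Q,X,\lambda}(P[f])(\zeta)]^2-[\g^*_{E^c_Q,X,\lambda}(P[f])(\zeta_Q)]^2=\int_{E^c_Q}|XP[f](z)|^2(1-|z|^2)^{(\lambda-1)n+1}\Delta(z,\zeta)\,dv(z),
\end{align*}
where $\Delta(z,\zeta):=|1-\langle z,\zeta\rangle|^{-\lambda n}-|1-\langle z,\zeta_Q\rangle|^{-\lambda n}$. Taking absolute values and using $|\Delta(z,\zeta)|\leq|K_{\lambda n}(z,\zeta)-K_{\lambda n}(z,\zeta_Q)|$ reduces matters to controlling the regularity of the kernel in its second argument.

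I would then decompose $E^c_Q=\bigcup_{k\geq 1}F_k$ and, on each shell, invoke Lemma~\ref{Poisson7} with $s=1$ and $l=-1/4$, yielding
\begin{align*}
(1-|z|^2)^{(\lambda-1)n+1}\,|K_{\lambda n}(z,\zeta)-K_{\lambda n}(z,\zeta_Q)|\lesssim(1-|z|^2)^{3/4}\cdot\frac{r_Q}{(2^kr_Q)^{2n+1/2}};
\end{align*}
the factor $(1-|z|^2)^{3/4}$ is the source of the eventual $2^{-j/4}$ decay. To bound $|XP[f](z)|$ on $F_k$ I would apply the size estimate from Lemma~\ref{smooth3} and split $\SN$ into the dyadic annuli $\mathcal{B}_0:=Q$ and $\mathcal{B}_j:=2^jQ\setminus 2^{j-1}Q$ for $j\geq 1$ around $\zeta_Q$. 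For $\xi\in\mathcal{B}_j$ with $|j-k|\geq 2$, the non-isotropic triangle inequality provides $|1-\langle z,\xi\rangle|\gtrsim(2^{\max(j,k)}r_Q)^2$, so the contribution of $\mathcal{B}_j$ to $|XP[f](z)|$ is controlled by $A_j:=\sigma(2^jQ)^{-1}\int_{2^jQ}|f|d\sigma$ times a fixed negative power of $2^{|j-k|}$. Integrating against $dv$ on $F_k$ (using $v(F_k)\lesssim(2^kr_Q)^{2n+2}$) and summing first in $k$ for each fixed $j$ should produce contributions of order $2^{-j/4}A_j^2$, exactly matching the target right-hand side.

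The main technical obstacle arises in the \emph{transition} indices $|j-k|\leq 1$, where $\xi\in\SN$ can be essentially as close to $\SN$ as $z$ itself, and no pointwise lower bound on $|1-\langle z,\xi\rangle|$ is available. To handle this case I would sub-divide $F_k$ dyadically in the value of $1-|z|^2$ and, on each sub-cell, replace the pointwise estimate of $|XP[f](z)|$ by an averaged one via $P[|f|](z)\lesssim M|f|(z^*)$ with $z^*=z/|z|$ lying in a fixed dilate of $2^kQ$; the extra factor $(1-|z|^2)^{3/4}$ supplied by the choice $l=-1/4$ is precisely the room needed to absorb the ensuing power loss, which explains the exponent $1/4$ in the conclusion. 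The $\mu$-case $\g^*_{E^c_Q,\mu,\lambda}$ proceeds in parallel, using the size estimate $|P(z,\xi)|\lesssim(1-|z|^2)^n|K_{2n}(z,\xi)|$ together with the Carleson bound $\mu(F_k)\lesssim(2^kr_Q)^{2n}$ in place of the volume bound on $F_k$, with the exponent of $(1-|z|^2)$ outside the kernel shifted from $(\lambda-1)n+1$ to $(\lambda-1)n$ throughout.
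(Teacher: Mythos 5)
Your reduction to the kernel difference $K_{\lambda n}(z,\zeta)-K_{\lambda n}(z,\zeta_Q)$ and the use of the shells $F_k$ together with Lemma~\ref{Poisson7} is the right framework, but your decomposition of $f$ differs from the paper's in a way that creates a real problem. You split $\SN$ into fixed annuli $\mathcal{B}_j$ around $\zeta_Q$ once and for all, and apply Lemma~\ref{Poisson7} with the single choice $s=1,\ l=-1/4$ throughout. The paper instead splits $f$ in a $k$-dependent way on each shell, into a local piece $f\chi_{2^{k+4}Q}$ and a far piece $f\chi_{\SN\setminus 2^{k+4}Q}$, and allocates \emph{different} amounts of kernel smoothing to each: $s=3,\ l=1$ (giving decay $r_Q/(2^k r_Q)^{6n+3}$) for the local piece bounded by the crude size estimate $|XP[f\chi_{2^{k+4}Q}](z)|\lesssim(1-|z|^2)^{-n-1}\int_{2^{k+4}Q}|f|$, and $s=1,\ l=-1$ (or $l=-1/4$ in the $\mu$-case) for the far piece, where the annuli separation gives $|1-\langle z,\xi\rangle|\gtrsim(2^{j+k}r_Q)^2$.

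The gap you flag at $|j-k|\leq 1$ is precisely where your uniform choice fails, and the sketch you give does not close it. After applying Lemma~\ref{Poisson7} with $s=1,\ l=-1/4$ the residual weight is $(1-|z|^2)^{3/4}$; multiplying by the squared crude size estimate for $XP$ near the diagonal, $(1-|z|^2)^{-2n-2}$, leaves $(1-|z|^2)^{-2n-5/4}$, and integrating this against $dv$ on $F_k$ (where $1-|z|^2$ ranges from $0$ up to $\approx(2^kr_Q)^2$, with the slice $\{1-|z|^2\approx\delta\}\cap F_k$ having measure $\approx\delta(2^kr_Q)^{2n}$) gives a divergent integral. The factor $(1-|z|^2)^{3/4}$ is therefore \emph{not} "the room needed to absorb the ensuing power loss"; one needs to swap to a stronger regularity estimate ($s=3,\ l=1$) to absorb the two extra powers of $(1-|z|^2)^{-n-1}$ and the volume of $F_k$ simultaneously. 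The maximal-function workaround $P[|f|](z)\lesssim M|f|(z^*)$ also does not help here: $M|f|$ is a global object controlled by the full range of dyadic averages, not by any single $\sigma(2^jQ)^{-1}\int_{2^jQ}|f|$, so it does not localize to the annulus $2^jQ$ as your argument would require. The fix is essentially the paper's: on each shell $F_k$, change the split of $f$ (and the exponents $s,l$) so that the near-diagonal contribution is paired with more kernel decay, and the far contribution with the annuli estimate.
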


	\begin{proof}
		We begin to show this lemma for $\g^*_{E^c_Q,\lambda}=\g^*_{E^c_Q,X,\lambda}$. Let $\zeta\in Q$. It is not difficult to verify
		$$\left|[\g^*_{E^c_{Q},X,\lambda}(P[f])(\zeta)]^2-[\g^*_{E^c_{Q},X,\lambda}(P[f])(\zeta_Q)]^2\right|\lesssim F_{X,1}+F_{X,2}.$$
		Here $F_{X,1}$ and $F_{X,2}$ are defined as follows:
		\begin{eqnarray*}
			F_{X,1}& = &\sum_{k=1}^{\infty}\int_{F_k}\frac{|K_{\lambda n}(z,\zeta)-K_{\lambda n}(z,\zeta_Q)|}{(1-|z|^2)^{(1-\lambda)n-1}}|XP[f\chi_{2^{k+4}Q}](z)|^2dv(z),\\
			F_{X,2}& = &\sum_{k=1}^{\infty}\int_{F_k}\frac{|K_{\lambda n}(z,\zeta)-K_{\lambda n}(z,\zeta_Q)|}{(1-|z|^2)^{(1-\lambda)n-1}}|XP[f\chi_{\SN\setminus2^{k+4}Q}](z)|^2dv(z),
		\end{eqnarray*}
		where $F_k$ is defined in (\ref{Poisson6}) with $k\geq1$.
		Hence, it suffices to show
		\begin{align}\label{Poisson8}
			\max\{F_{X,1},F_{X,2}\}\lesssim \sum_{j=1}^{\infty}\frac{1}{2^{j/4}}\left(\frac{1}{\sigma(2^jQ)}\int_{2^jQ}|f|d\sigma\right)^2.
		\end{align}
		We first consider $F_{X,1}$. By Lemma \ref{smooth3}, for each $z\in\BN$,
		$$|XP[f\chi_{2^{k+4}Q}](z)|\lesssim\frac{1}{(1-|z|^2)^{n+1}}\int_{2^{k+4}Q }|f|d\sigma.$$
		This gives that 
		\begin{align*}
			F_{X,1}\lesssim&\sum_{k=1}^{\infty}\int_{F_k}\frac{|K_{\lambda n}(z,\zeta)-K_{\lambda n}(z,\zeta_Q)|}{(1-|z|^2)^{(1-\lambda)n-1}}\left(\frac{1}{(1-|z|^2)^{n+1}}\int_{2^{k+4}Q }|f|d\sigma\right)^2dv(z)\\
			=&\sum_{k=1}^{\infty}\int_{F_k}\frac{|K_{\lambda n}(z,\zeta)-K_{\lambda n}(z,\zeta_Q)|}{(1-|z|^2)^{(3-\lambda)n+1}}dv(z)\left(\int_{2^{k+4}Q }|f|d\sigma\right)^2.
		\end{align*}
		Now applying Lemma \ref{Poisson7} with $s=3$ and $l=1$ and noting  $v(F_k)\lesssim (2^{k}r_Q)^{2n+2}$ for each $k\geq1$, we get
		\begin{align*}	F_{X,1}\lesssim&\sum_{k=1}^\infty\frac{r_Qv(F_k)}{(2^kr_Q)^{6n+3}}
			\left(\int_{2^{k+4}Q }|f|d\sigma\right)^2\\
			\lesssim&\sum_{k=1}^\infty\frac{1}{2^{kn}(2^kr_Q)^{4n}}
			\left(\int_{2^{k+4}Q }|f|d\sigma\right)^2\\
			\lesssim&\sum_{j=1}^{\infty}\frac{1}{2^{j/4}}\left(\frac{1}{\sigma(2^jQ)}\int_{2^jQ}|f|d\sigma\right)^2,
		\end{align*}
		where in the last inequality we use \eqref{Poisson2}.
		
		We then turn to $F_{X,2}$. Note that for each $z\in F_k$ and $\xi\in2^{j+k+4}Q \setminus2^{j+k+3}Q $ with $j\in\mathbb{N}$ and $k\in\mathbb{N}$, 
		\begin{align}\label{Poisson11}
			d(\xi,z)\geq d(\xi,\zeta_Q)-d(\zeta_Q,z)\gtrsim2^{j+k}r_Q.
		\end{align}
		Hence, by Lemma \ref{smooth3}, for each $z\in F_k$,
		\begin{align*}
			|XP[f\chi_{\SN\setminus2^{k+4}Q }](z)|
			\lesssim&\sum_{j=1}^{\infty}\int_{2^{j+k+4}Q\setminus2^{j+k+3}Q}\frac{|f(\xi)|}{|1-\langle z,\xi\rangle|^{n+1}}d\sigma(\xi)\\
			\lesssim&\sum_{j=1}^{\infty}\frac{1}{2^{(2n+2)(j+k)}r_Q^{2n+2}}\int_{2^{j+k+4}Q}|f(\xi)|d\sigma(\xi).
		\end{align*}
		On the other hand, we use Lemma \ref{Poisson7} with $s=1$ and $l=-1$ to conclude that for each  $\zeta\in Q $ and $z\in F_k$,
		$$\frac{|K_{\lambda n}(z,\zeta)-K_{\lambda n}(z,\zeta_Q)|}{(1-|z|^2)^{(1-\lambda)n-1}}\lesssim\frac{r_Q}{(2^kr_Q)^{2n-1}}.$$
		By the above estimates and $v(F_k)\lesssim (2^{k}r_Q)^{2n+2}$ for each $k\in\mathbb{N}$, we have
		\begin{align*}
			F_{X,2}\lesssim&\sum_{k=1}^\infty\frac{r_Qv(F_k)}{(2^kr_Q)^{2n-1}2^{7k/2}r_Q^4}
			\left(\sum_{j=1}^{\infty}\frac{1}{2^{(j+k)/4}2^{2n(j+k)}r_Q^{2n}}\int_{2^{j+k+4}Q}|f|d\sigma\right)^2\\
			\lesssim&\sum_{k=1}^\infty\frac{1}{2^{k/2}}
			\left(\sum_{j=1}^{\infty}\frac{1}{2^{(j+k)/4}\sigma(2^{j+k+4}Q)}\int_{2^{j+k+4}Q}|f|d\sigma\right)^2\\
			\lesssim&\sum_{j=1}^{\infty}\frac{1}{2^{j/4}}\left(\frac{1}{\sigma(2^jQ)}\int_{2^jQ}|f|d\sigma\right)^2,
		\end{align*}
		where in the second inequality we use \eqref{Poisson2}, and the last inequality follows from H\"older's inequality. This proves \eqref{Poisson8}.

		We come to the proof of $\g^*_{E^c_Q,\lambda}=\g^*_{E^c_Q,\mu,\lambda}$. Let $\zeta\in Q$. Following the same argument as in the previous case, one also has
		$$\left|[\g^*_{E^c_Q,\mu,\lambda}(P[f])(\zeta)]^2-[\g^*_{E^c_Q,\mu,\lambda}(P[f])(\zeta_Q)]^2\right|\lesssim F_{\mu,1}+F_{\mu,2},$$
		where $F_{\mu,1}$ and $F_{\mu,2}$ are given by 
		\begin{eqnarray*}
			F_{\mu,1}& = &\sum_{k=1}^{\infty}\int_{F_k}\frac{|K_{\lambda n}(z,\zeta)-K_{\lambda n}(z,\zeta_Q)|}{(1-|z|^2)^{(1-\lambda)n}}|P[f\chi_{2^{k+4}Q}](z)|^2d\mu(z),\\
			F_{\mu,2}& = &\sum_{k=1}^{\infty}\int_{F_k}\frac{|K_{\lambda n}(z,\zeta)-K_{\lambda n}(z,\zeta_Q)|}{(1-|z|^2)^{(1-\lambda)n}}|P[f\chi_{\SN\setminus2^{k+4}Q}](z)|^2d\mu(z).
		\end{eqnarray*}
		It suffices to show
		\begin{align}\label{Poisson18}
			\max\{F_{\mu,1},F_{\mu,2}\}\lesssim\sum_{j=1}^{\infty}\frac{1}{2^{j/4}}\left(\frac{1}{\sigma(2^jQ)}\int_{2^jQ}|f|d\sigma\right)^2.
		\end{align}
		Note that for each $z\in\BN$,
		$$|P[f\chi_{2^{k+4}Q }](z)|\lesssim\frac{1}{(1-|z|^2)^n}\int_{2^{k+4}Q }|f|d\sigma.$$
		Therefore, by \eqref{Poisson2}, we find
		\begin{align*}
			F_{\mu,1}\lesssim&\sum_{k=1}^{\infty}\int_{F_k}\frac{|K_{\lambda n}(z,\zeta)-K_{\lambda n}(z,\zeta_Q)|}{(1-|z|^2)^{(3-\lambda)n}}d\mu(z)\left(\int_{2^{k+4}Q }|f|d\sigma\right)^2\\
			\lesssim&\sum_{k=1}^\infty\frac{r_Q\mu(F_k)}{(2^kr_Q)^{6n+1}}
			\left(\int_{2^{k+4}Q}|f|d\sigma\right)^2\\
			\lesssim&\sum_{k=1}^\infty\frac{1}{2^k}\left(\frac{1}{\sigma(2^{k}Q)}\int_{2^{k}Q}|f|d\sigma\right)^2\\
			\lesssim&\sum_{j=1}^{\infty}\frac{1}{2^{j/4}}\left(\frac{1}{\sigma(2^jQ)}\int_{2^jQ}|f|d\sigma\right)^2,
		\end{align*}
		where the second inequality comes from Lemma \ref{Poisson7} by taking $s=3$ and $l=0$, and the third inequality is from the definition of Carleson measure, that is $\mu(F_k)\lesssim (2^kr_Q)^{2n}$ for each $k\geq1$.

		It remains to estimate $F_{\mu,2}$. Notice that
		$$F_{\mu,2}=\sum_{k=1}^{\infty}\int_{F_k}\frac{|K_{\lambda n}(z,\zeta)-K_{\lambda n}(z,\zeta_Q)|}{(1-|z|^2)^{(1-\lambda)n-1/4}}\left(\frac{P[f\chi_{\SN\setminus2^{k+4}Q}](z)}{(1-|z|^2)^{1/8}}\right)^2d\mu(z).$$ 
		It follows from (\ref{Poisson11}) that for each $z\in F_k$,
		\begin{align*}
			&(1-|z|^2)^{-1/8}|P[f\chi_{\SN\setminus2^{k+4}Q}](z)|\\
			\lesssim&\sum_{j=1}^{\infty}\int_{2^{j+k+4}Q\setminus2^{j+k+3}Q}\frac{1}{|1-\langle z,\xi\rangle|^{n+1/8}}|f\chi_{\SN\setminus2^{k+3}Q}(\xi)|d\sigma(\xi)\\
			\lesssim&\sum_{j=1}^{\infty}\frac{1}{(2^{2(j+k)}r_Q ^{2})^{n+1/8}}\int_{2^{j+k+4}Q}|f|d\sigma.
		\end{align*}
		On the other hand, applying Lemma \ref{Poisson7} with $s=1$ and $l=-1/4$ one gets for each  $\zeta\in Q $ and $z\in F_k$,
		$$\frac{|K_{\lambda n}(z,\zeta)-K_{\lambda n}(z,\zeta_Q)|}{(1-|z|^2)^{(1-\lambda)n-1/4}}\lesssim\frac{r_Q}{(2^kr_Q)^{2n+1/2}}.$$
		Therefore, by \eqref{Poisson2},
		\begin{align*}
			F_{\mu,2}\lesssim&\sum_{k=1}^\infty\frac{r_Q\mu(F_k)}{(2^kr_Q)^{2n+1/2}}\left(\sum_{j=1}^{\infty}
			\frac{1}{2^{(j+k)/4}r_Q^{1/4}\sigma(2^{j+k+4}Q)}\int_{2^{j+k+4}Q}|f|d\sigma\right)^2\\
			=&\sum_{k=1}^\infty\frac{r_Q\mu(F_k)}{(2^kr_Q)^{2n+1/2}r_Q^{1/2}}\left(\sum_{j=1}^{\infty}
			\frac{1}{2^{(j+k)/4}\sigma(2^{j+k+4}Q)}\int_{2^{j+k+4}Q}|f|d\sigma\right)^2.
		\end{align*}
		Thanks to $\mu(F_k)\lesssim (2^kr_Q)^{2n}$ for each $k\geq1$ and using H\"older's inequality, we finally deduce that 
		$$F_{\mu,2}\lesssim\left(\sum_{j=1}^{\infty}
		\frac{1}{2^{j/4}\sigma(2^{j}Q)}\int_{2^{j}Q}|f|d\sigma\right)^2\\
		\lesssim\sum_{j=1}^{\infty}\frac{1}{2^{j/4}}\left(\frac{1}{\sigma(2^jQ)}\int_{2^jQ}|f|d\sigma\right)^2.$$
		This completes the proof.
	\end{proof}

	\bigskip

	\section{Proofs of Lemma \ref{Rd1} and Lemma \ref{Rd3}}\label{prooflemmas}
	In this section, we prove Lemma \ref{Rd1} and Lemma \ref{Rd3}. Our proofs rely on the so-called local mean oscillation technique. Now we begin the proofs, and implement the local mean oscillation formula.
	
	Recall $\g^*_\lambda\in\{\g^*_{X,\lambda},\g^*_{\mu,\lambda}\}$ and $Q^0=\SN$. Recall that $m_{(\g^*_{\lambda}(P[f]))^2}(Q^0)$ is a median of $(\g^*_{\lambda}(P[f]))^2$ on $Q^0$. For any $\zeta\in Q^0$,
	\begin{align*}
		|\g^*_\lambda(P[f])(\zeta)|^2
		\leq&|m_{(\g^*_{\lambda}(P[f]))^2}(Q^0)|+|(\g^*_\lambda(P[f])(\zeta))^2-m_{(\g^*_{\lambda}(P[f]))^2}(Q^0)|.
	\end{align*}
	By the local mean oscillation formula (i.e. Theorem \ref{LMO}), there exists a sparse family $S(Q^0)\subset\mathscr{D}(Q^0)$ such that for a.e. $\zeta\in Q^0$,
	$$|(\g^*_\lambda(P[f])(\zeta))^2-m_{(\g^*_{\lambda}(P[f]))^2}(Q^0)|\leq2\sum_{Q\in S(Q^0)}\omega_\epsilon((\g^*_\lambda(P[f]))^2;Q)\chi_Q(\zeta),$$
	for some $\epsilon\in(0,1)$. Therefore, in order to show Lemma \ref{Rd1} and Lemma \ref{Rd3}, it suffices to show
	\begin{align}\label{Rd2}
		\|\chi_{Q^0}|m_{(\g^*_{\lambda}(P[f]))^2}(Q^0)|^{1/2}\|_{L^p_\omega}\lesssim[\omega]_{A_p}^{\max\{1/2,1/(p-1)\}}\|f\|_{L^p_\omega}
	\end{align}
	and
	\begin{align}\label{Rd9}
		\left\|\left[\sum_{Q\in S(Q^0)}\omega_\epsilon((\g^*_\lambda(P[f]))^2;Q)\chi_Q(\zeta)\right]^{1/2}\right\|_{L^p_\omega}\lesssim[\omega]_{A_p}^{\max\{1/2,1/(p-1)\}}\|f\|_{L^p_\omega}.
	\end{align}
	In the remaining of this section, we are devoted to the proofs of \eqref{Rd2} and \eqref{Rd9}. We proceed with the proof of \eqref{Rd2}.
	\subsection{Proof of \eqref{Rd2}}
	Note
	\begin{align*}
		|m_{(\g^*_{\lambda}(P[f]))^2}(Q^0)|
		\leq&((\chi_{Q^0}\g^*_{\lambda}(P[f]))^2)^*(\sigma(Q^0)/3)\\
		\leq&[(\chi_{Q^0}\g^*_{\lambda}(P[f]))^*(\sigma(Q^0)/6)]^2.
	\end{align*}
	Thus, from Theorem \ref{Weak7} and $Q^0=\SN$, one has
	$$(\chi_{Q^0}\g^*_{\lambda}(P[f]))^*(\sigma(Q^0)/6)\leq\frac{6}{\sigma(Q^0)}
	\|\g^*_{\lambda}(P[f])\|_{L^{1,\infty}}\lesssim\frac{1}{\sigma(\SN)}\int_{\SN}|f|d\sigma,$$
	and hence
	$$\chi_{Q^0}(\zeta)|m_{(\g^*_{\lambda}(P[f]))^2}(Q^0)|^{1/2}\lesssim Mf(\zeta),$$
	which yields \eqref{Rd2} by Theorem \ref{Apw}. This shows \eqref{Rd2}.
	
	\subsection{Some key estimates} In this subsection, we aim to show \eqref{Rd9}, which finishes the proofs of Lemma \ref{Rd1} and Lemma \ref{Rd3}. We need to use the following two auxiliary lemmas so as to prove \eqref{Rd9}. Before presenting these two helpful lemmas, we introduce some definitions.
	
	Let $m\in\mathbb{N}$. Given a sparse family $S\subset \mathscr{D}$, for any $f\in L^1$, define
	$$T_{2,m}^{S}f(\zeta)=\left(\sum_{Q\in S}\left(\frac{1}{\sigma(2^mQ)}\int_{2^mQ}|f|d\sigma\right)^2
	\chi_{Q}(\zeta)\right)^{1/2}.$$
	Given $\omega\in A_3$, let $\mathbb{X}=L^{3/2}_\omega$. The dual space of $\mathbb{X}$ can be identified with $\mathbb{X}'=L^{3}_{\omega^{-2}}$ under the pair $\int_{\SN}fgd\sigma$ for $f\in\mathbb{X}$ and $g\in\mathbb{X}'$. Denote by $\mathbb{X}^{(2)}$ the space consisting of all measurable functions $f$ such that $|f|^2\in \mathbb{X}$, which is endowed with the norm $\|f\|_{\mathbb{X}^{(2)}}=\||f|^2\|_\mathbb{X}^{1/2}$. Indeed, $\mathbb{X}^{(2)}=L^3_\omega$.
	
	Recall that $\mathscr{D}_i$ for $i=1,\cdots,\mathscr{K}$ is a collection of dyadic systems given in Remark \ref{de1} (ii). The following lemma has been shown in \cite{Ler3} on Euclidean spaces, and we extend it to $\SN$ following the same arguments as in \cite{Ler3}. 
	\begin{lemma}\label{Ler31}
		Under the same notation as above, assume that a measurable function $f\in L^1$ satisfies
		\begin{align}\label{as1}
			\max_{1\leq i\leq \mathscr{K}}\sup_{S: \text{sparse},S\subset\mathscr{D}_i}\|T_{2,0}^S(f)\|_{\mathbb{X}^{(2)}}<\infty.
		\end{align}
		Then one has
		$$\sup_{S:\text{sparse},S\subset\mathscr{D}}\|T_{2,m}^{S}f\|_{\mathbb{X}^{(2)}}\leq C m^{1/2}\max_{1\leq i\leq \mathscr{K}}\sup_{S: \text{sparse},S\subset\mathscr{D}_i}\|T_{2,0}^S(f)\|_{\mathbb{X}^{(2)}}.$$
		Here the constant $C>0$ does not depend on $f$, $\omega$ and $m$.
	\end{lemma}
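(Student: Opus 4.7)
The strategy is to mimic the argument of Lerner in \cite{Ler3}, making use of the three-lattice theorem (Remark \ref{de1}(ii)) to pass from $m$-dilated averages to ordinary averages in one of the adjacent dyadic systems $\mathscr{D}_i$. Given any sparse family $S\subset\mathscr{D}$, for each $Q\in S$ I would select an index $i(Q)\in\{1,\ldots,\mathscr{K}\}$ and a cube $\hat Q\in\mathscr{D}_{i(Q)}$ with $2^m Q\subset\hat Q$ and $\sigma(\hat Q)\leq C\sigma(2^m Q)$. This immediately yields the pointwise bound
\[
\frac{1}{\sigma(2^m Q)}\int_{2^m Q}|f|\,d\sigma\;\leq\;\frac{C}{\sigma(\hat Q)}\int_{\hat Q}|f|\,d\sigma,
\]
and since $Q\subset\hat Q$, we obtain $T_{2,m}^{S}f(\zeta)^{2}\leq C\sum_{Q\in S}\langle|f|\rangle_{\hat Q}^{\,2}\,\chi_{\hat Q}(\zeta)$.

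The central step is to reorganize the right-hand side into $O(m)$ genuinely sparse pieces inside each $\mathscr{D}_i$. Concretely, I would write $S=\bigsqcup_{i=1}^{\mathscr{K}}\bigsqcup_{k=0}^{\ell(m)}S_{i,k}$, with $\ell(m)\lesssim m$, where the stratification is governed by the relative scale parameter $\log_{r}(\sigma(\hat Q)/\sigma(Q))$ taken modulo an appropriate constant. Because the scale ratio satisfies $\sigma(\hat Q)/\sigma(Q)\lesssim 2^{2nm}$, only $O(m)$ residue classes are needed, and within each class the map $Q\mapsto \hat Q$ becomes sufficiently scale-rigid that the collection $\widehat{S_{i,k}}:=\{\hat Q:Q\in S_{i,k}\}$ forms, after an injective relabelling, a sparse family in $\mathscr{D}_i$. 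This produces the pointwise control
\[
T_{2,m}^{S}f(\zeta)^{2}\;\leq\;C\sum_{i=1}^{\mathscr{K}}\sum_{k=0}^{\ell(m)}\bigl(T_{2,0}^{\widehat{S_{i,k}}}f(\zeta)\bigr)^{2}.
\]

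Taking $\mathbb{X}$-norms on both sides, applying the triangle inequality in $\mathbb{X}$, and invoking the identity $\|h\|_{\mathbb{X}^{(2)}}^{2}=\|h^{2}\|_{\mathbb{X}}$, I get
\[
\|T_{2,m}^{S}f\|_{\mathbb{X}^{(2)}}^{2}\;\leq\;C\sum_{i,k}\|T_{2,0}^{\widehat{S_{i,k}}}f\|_{\mathbb{X}^{(2)}}^{2}\;\leq\;C\,\mathscr{K}\,\ell(m)\,\Bigl(\max_{1\leq i\leq\mathscr{K}}\sup_{S'\text{ sparse},\,S'\subset\mathscr{D}_i}\|T_{2,0}^{S'}f\|_{\mathbb{X}^{(2)}}\Bigr)^{2},
\]
where the last step invokes the hypothesis \eqref{as1} on each $\widehat{S_{i,k}}$. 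Taking square roots produces the desired $m^{1/2}$ factor and completes the argument.

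The hard part, and the technical heart of the proof, will be the layered decomposition: verifying that each stratum $\widehat{S_{i,k}}$ is genuinely sparse in $\mathscr{D}_i$ requires careful bookkeeping, since the disjointness of the pairwise major portions $E(Q)$ of the original family $S$ must be transferred under the many-to-one map $Q\mapsto\hat Q$ within each scale-class. This is precisely the place where the loss of a factor $m$ (giving $m^{1/2}$ after the $\ell^{2}$-type inequality) is unavoidable, and where the nonisotropic geometry of $\SN$ enters the argument through the doubling estimates \eqref{doub1}--\eqref{doub2} and the structure of the dyadic systems furnished by Theorem \ref{de} and Remark \ref{de1}.
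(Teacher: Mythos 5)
Your proposal has a genuine gap, and the gap is where you yourself flagged the ``hard part.'' The approach cannot give $m^{1/2}$; at best it gives an exponential loss in $m$. Two issues:

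\textbf{(1) The pointwise majorization is not the right object.} The correct consequence of choosing $\hat Q\in\mathscr{D}_{i}$ with $2^mQ\subset\hat Q$ and $\sigma(\hat Q)\lesssim\sigma(2^mQ)$ is
$$T_{2,m}^{S}f(\zeta)^{2}\;\lesssim\;\sum_{Q\in S}\Bigl(\tfrac{1}{\sigma(\hat Q)}\textstyle\int_{\hat Q}|f|\Bigr)^{2}\chi_{Q}(\zeta),$$
not $\sum_{Q}\langle|f|\rangle_{\hat Q}^{2}\chi_{\hat Q}(\zeta)$. Enlarging $\chi_{Q}$ to $\chi_{\hat Q}$ is of course a valid upper bound, but it is catastrophic: a single $\hat Q$ can be the shared image of roughly $2^{2nm}$ pairwise disjoint cubes $Q\in S$ of the same generation, so the family $\{\hat Q:Q\in S\}$ (with multiplicity) has pointwise overlap of order $2^{2nm}$. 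Whatever you do afterwards, that crude step has already lost an exponential factor, and no reordering can recover the linear-in-$m$ loss that the lemma claims.

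\textbf{(2) The stratification is not well-defined and sparsity does not transfer.} The ``relative scale parameter'' $\log_{r}\bigl(\sigma(\hat Q)/\sigma(Q)\bigr)$ is essentially constant ($\asymp m$) for all $Q$, because every $\hat Q$ is a cube about $m$ dyadic generations above $Q$; taking it modulo a constant therefore yields $O(1)$ classes, not $O(m)$. More fundamentally, even if one restricts to $Q$ whose images $\hat Q$ all lie at a common scale, the map $Q\mapsto\hat Q$ is massively non-injective (with multiplicity up to $2^{2nm}$) and the images $\hat Q$ for distinct $Q$ at comparable scales overlap heavily; the pairwise-disjoint major portions $E(Q)$ of the sparse family $S$ simply do not transfer along this map. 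There is no ``injective relabelling'' that makes $\{\hat Q:Q\in S_{i,k}\}$ sparse.

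The paper avoids all of this by working dually. Writing $\|T_{2,m}^S f\|_{\mathbb{X}^{(2)}}^{2}=\int (T_{2,m}^Sf)^{2}|g|\,d\sigma$ for a dual extremizer $g$ and expanding gives a \emph{bilinear} sparse form $\sum_{Q}\langle|f|\rangle_{\hat Q}\,\langle|g|\rangle_{\hat Q,\,Q}\,\sigma(\hat Q)$, where the $g$-average $\frac{1}{\sigma(\hat Q)}\int_{Q}|g|$ involves the small cube $Q$ (keeping the factor $\chi_{Q}$ rather than enlarging to $\chi_{\hat Q}$). The resulting bilinear operator $\mathscr{M}_{\mathcal{I}^i,m}(f,g)$ is then dominated pointwise by $Mf\cdot\mathscr{F}_m(g)$, and the whole $m$-dependence is deposited in the weak $(1,1)$ norm of the \emph{shifted} dyadic operator $\mathscr{F}_{m}$, which by Anderson--Vagharshakyan (cited as \cite[Lemma~6.5]{AtVa}) grows only like $m$, not exponentially. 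The local mean oscillation decomposition is then applied to this scalar bilinear quantity (not to the square-function piece), giving a genuine sparse domination with a factor $m$, and hence $m^{1/2}$ after the square root. The duality step and the $m$-linear weak-$(1,1)$ estimate for the shifted maximal operator are precisely the ingredients your outline is missing, and there is no direct pointwise/\,$\ell^{2}$ route around them.
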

	
	\begin{proof}
		We use the notation introduced in Theorem \ref{de} and Remark \ref{de1}. Let $S=\{Q^k_j\}\subset \mathscr{D}$ be a sparse family. Applying Remark \ref{de1} (ii), we can decompose the cubes $\{2^mQ^k_j\}$ into $\mathscr{K}$ disjoint families $F_i$ such that for each $2^mQ^k_j\in F_i$ there exists a cube $P_{j,k}^{m,i}\in \mathscr{D}_i$ such that $2^mQ_j^k\subset P_{j,k}^{m,i}$ and $\text{diam}P_{j,k}^{m,i}\leq (\rho/2)\text{diam}2^mQ^k_j$. By \eqref{eq5}, 
		\begin{align}\label{eq3}
			\sigma(P_{j,k}^{m,i})\asymp\sigma(2^mQ_j^k).
		\end{align}
		For each $i=1,\cdots,\mathscr{K}$, let
		$$\mathcal{J}^i=\{(j,k):2^mQ_j^k\in F_i\}.$$
		Let $\mathcal{I}^i$ be any finite subset of $\mathcal{J}^i$. Let
		$$T_{2,m,\mathcal{I}^i}^Sf(\zeta)=\left(\sum_{(j,k)\in\mathcal{I}^i}\left(\frac{1}{\sigma(2^mQ_j^k)}\int_{2^mQ_j^k}|f|d\sigma\right)^2
		\chi_{Q_j^k}(\zeta)\right)^{1/2}.$$
		Then, $T_{2,m,\mathcal{I}^i}^S(f)\in\mathbb{X}^{(2)}$ by \eqref{as1}. By duality, there exists $g\in \mathbb{X}'$ with $\|g\|_{\mathbb{X}'}=1$ such that
		\begin{align}\label{eq8}
			\|T_{2,m,\mathcal{I}^i}^S(f)\|_{\mathbb{X}^{(2)}}^2=\|[T_{2,m,\mathcal{I}^i}^S(f)]^2\|_{\mathbb{X}}
			=&\int_{\SN}[T_{2,m,\mathcal{I}^i}^S(f)]^2|g|d\sigma
			\lesssim\int_{\SN}\mathscr{M}_{\mathcal{I}^i,m}(f,g)|f|d\sigma,
		\end{align}
		where
		$$\mathscr{M}_{\mathcal{I}^i,m}(f,g)(\zeta)
		=\sum_{(j,k)\in\mathcal{I}^i}\left(\frac{1}{\sigma(P_{j,k}^{m,i})}\int_{P_{j,k}^{m,i}}|f|d\sigma\right)
		\left(\frac{1}{\sigma(P_{j,k}^{m,i})}\int_{Q_j^k}|g|d\sigma\right)\chi_{P_{j,k}^{m,i}}(\zeta)$$
		and in the last inequality we use \eqref{eq3}. Recall that $m_{\mathscr{M}_{\mathcal{I}^i,m}(f,g)}(\SN)$ is a median of $\mathscr{M}_{\mathcal{I}^i,m}(f,g)$ on $\SN$. Note that for a.e. $\zeta\in \SN$,
		\begin{align}\label{Tm1}
			\mathscr{M}_{\mathcal{I}^i,m}(f,g)(\zeta)
			\leq\left|m_{\mathscr{M}_{\mathcal{I}^i,m}(f,g)}(\SN)\right|+\left|\mathscr{M}_{\mathcal{I}^i,m}(f,g)(\zeta)-m_{\mathscr{M}_{\mathcal{I}^i,m}(f,g)}(\SN)\right|.
		\end{align}
		Consider $m_{\mathscr{M}_{\mathcal{I}^i,m}(f,g)}(\SN)$ firstly. For $\zeta\in\SN$, we have
		$$\mathscr{M}_{\mathcal{I}^i,m}(f,g)(\zeta)\leq Mf(\zeta)\mathscr{F}_m(g)(\zeta),$$
		where
		$$\mathscr{F}_m(g)(\zeta)=\sum_{(j,k)\in \mathcal{I}^i}\frac{1}{\sigma(P_{j,k}^{m,i})}\int_{Q_j^k}|g|d\sigma \chi_{P_{j,k}^{m,i}}(\zeta).$$
		We hence have
		\begin{align*}
			\left|m_{\mathscr{M}_{\mathcal{I}^i,m}(f,g)}(\SN)\right|
			\leq&\left(\chi_{\SN}\mathscr{M}_{\mathcal{I}^i,m}(f,g)\right)^*(\sigma(\SN)/3)\\
			\leq&\left(\chi_{\SN}Mf\mathscr{F}_m(g)\right)^*(\sigma(\SN)/3)\\
			\leq&\left(\chi_{\SN}Mf\right)^*(\sigma(\SN)/6)\left(\chi_{\SN}\mathscr{F}_m(g)\right)^*(\sigma(\SN)/6)\\
			\lesssim&\frac{1}{\sigma(\SN)}\|Mf\|_{L^{1,\infty}}\frac{1}{\sigma(\SN)}\|\mathscr{F}_m(g)\|_{L^{1,\infty}}.
		\end{align*}
		Using H\"older's inequality,
		\begin{align*}\|g\|_{L^1}=&\int_{\SN}|g|\omega^{-\frac{2}{3}}\omega^{\frac{2}{3}}d\sigma\\
			\leq&\left(\int_{\SN}|g|^3\omega^{-2}d\sigma\right)^{1/3}\left(\int_{\SN}\omega d\sigma\right)^{2/3}\\
			=&\|g\|_{\mathbb{X}'}(\omega(\SN))^{2/3}<\infty,
		\end{align*}
		which implies $g\in L^1$. By \cite[Lemma 6.5]{AtVa}, $\|\mathscr{F}_m(g)\|_{L^{1,\infty}}\lesssim m\|g\|_{L^1}$. 
		Combine with the weak type $(1,1)$ of $M$, and we obtain
		$$\left|m_{\mathscr{M}_{\mathcal{I}^i,m}(f,g)}(\SN)\right|\lesssim m\frac{1}{\sigma(\SN)}\int_{\SN}|f|d\sigma\frac{1}{\sigma(\SN)}\int_{\SN}|g|d\sigma.$$
		Note that $S_{i,1}:=\{\SN\}$ is also a sparse family, and we rewrite
		\begin{align}\label{Tm5}
			\chi_{\SN}(\zeta)\left|m_{\mathscr{M}_{\mathcal{I}^i,m}(f,g)}(\SN)\right|\lesssim m\sum_{Q'\in S_{i,1}}\frac{1}{\sigma(Q')}\int_{Q'}|f|d\sigma\frac{1}{\sigma(Q')}
			\int_{Q'}|g|d\sigma\chi_{Q'}(\zeta).
		\end{align}

		Next we consider the second term of \eqref{Tm1}. Applying Theorem \ref{LMO}, there exists a (possibly empty) sparse family $S_{i,2}\subset \mathscr{D}_i$ such that for a.e. $\zeta\in \SN$,
		\begin{align}\label{eq9}
			\left|\mathscr{M}_{\mathcal{I}^i,m}(f,g)(\zeta)-m_{\mathscr{M}_{\mathcal{I}^i,m}(f,g)}(\SN)\right|
			\leq2\sum_{Q'\in S_{i,2}}\omega_\epsilon\left(\mathscr{M}_{\mathcal{I}^i,m}(f,g);Q'\right)\chi_{Q'}(\zeta)
		\end{align}
		for some $\epsilon\in(0,1)$.
		
		For $\zeta\in Q'\in S_{i,2}$, we have
		\begin{align}\label{Tm2}
			\mathscr{M}_{\mathcal{I}^i,m}(f,g)(\zeta)
			=\sum_{(j,k)\in \mathcal{I}^i:P_{j,k}^{m,i}\subset Q'}A_{j,k}^{m,i}(f,g)(\zeta)+\sum_{(j,k)\in \mathcal{I}^i:Q'\subset P_{j,k}^{m,i}}A_{j,k}^{m,i}(f,g)(\zeta),
		\end{align}
		where
		$$A_{j,k}^{m,i}(f,g)(\zeta)=\left(\frac{1}{\sigma(P_{j,k}^{m,i})}\int_{P_{j,k}^{m,i}}|f|d\sigma\right)
		\left(\frac{1}{\sigma(P_{j,k}^{m,i})}\int_{Q_j^k}|g|d\sigma\right)\chi_{P_{j,k}^{m,i}}(\zeta).$$
		For the first term of \eqref{Tm2}, we have for $\zeta\in Q'\in S_{i,2}$ that
		\begin{align}\label{Tm4}
			\sum_{(j,k)\in \mathcal{I}^i:P_{j,k}^{m,i}\subset Q'}A_{j,k}^{m,i}(f,g)(\zeta)\leq  M(f\chi_{Q'})(\zeta)\mathscr{F}_m(g\chi_{Q'})(\zeta),
		\end{align}
		where $$\mathscr{F}_m(g\chi_{Q'})(\zeta)=\sum_{(j,k)\in \mathcal{I}^i}\frac{1}{\sigma(P_{j,k}^{m,i})}\int_{Q_j^k}|g|\chi_{Q'}d\sigma\chi_{P_{j,k}^{m,i}}(\zeta).$$
		For the second term of \eqref{Tm2}, we have for $\zeta\in Q'\in S_{i,2}$ that
		$$\sum_{(j,k)\in \mathcal{I}^i:Q'\subset P_{j,k}^{m,i}}A_{j,k}^{m,i}(f,g)(\zeta)=\sum_{(j,k)\in \mathcal{I}^i:Q'\subset P_{j,k}^{m,i}}\frac{1}{\sigma(P_{j,k}^{m,i})}\int_{P_{j,k}^{m,i}}|f|d\sigma
		\frac{1}{\sigma(P_{j,k}^{m,i})}\int_{Q_j^k}|g|d\sigma=:c.$$
		By \eqref{Tm2} and \eqref{Tm4},
		\begin{align}\label{Tm3}
			\left|\mathscr{M}_{\mathcal{I}^i,m}(f,g)(\zeta)-c\right|
			\leq M(f\chi_{Q'})(\zeta)\mathscr{F}_m(g\chi_{Q'})(\zeta).
		\end{align}
		By \cite[Lemma 6.5]{AtVa} again, $\|\mathscr{F}_m(g\chi_{Q'})\|_{L^{1,\infty}}\lesssim m\|g\chi_{Q'}\|_{L^1}$. Then by the weak type $(1,1)$ of $M$, we have from \eqref{Tm3} that
		\begin{align*}
			&\left(\chi_{Q'}\left(\mathscr{M}_{\mathcal{I}^i,m}(f,g)(\zeta)-c\right)\right)^*(\epsilon\sigma(Q'))\\
			\leq&(M(f\chi_{Q'}))^*(\epsilon\sigma(Q')/2)(\mathscr{F}_m(g\chi_{Q'}))^*(\epsilon\sigma(Q')/2)\\
			\lesssim&\frac{1}{\sigma(Q')}\|M(f\chi_{Q'})\|_{L^{1,\infty}}\frac{1}{\sigma(Q')}\|\mathscr{F}_m(g\chi_{Q'})\|_{L^{1,\infty}}\\
			\lesssim&m\frac{1}{\sigma(Q')}\int_{Q'}|f|d\sigma\frac{1}{\sigma(Q')}\int_{Q'}|g|d\sigma,
		\end{align*}
		which implies
		\begin{align*}
			\omega_\epsilon\left(\mathscr{M}_{\mathcal{I}^i,m}(f,g);Q'\right)
			\lesssim m\frac{1}{\sigma(Q')}\int_{Q'}|f|d\sigma\frac{1}{\sigma(Q')}\int_{Q'}|g|d\sigma.
		\end{align*}
		Combining with \eqref{eq9}, we have
		\begin{align}\label{Tm6}
			&\left|\mathscr{M}_{\mathcal{I}^i,m}(f,g)(\zeta)-m_{\mathscr{M}_{\mathcal{I}^i,m}(f,g)}(\SN)\right| \lesssim m\sum_{Q'\in S_{i,2}}\left(\frac{1}{\sigma(Q')}\int_{Q'}|f|d\sigma\right)\left(\frac{1}{\sigma(Q')}
			\int_{Q'}|g|d\sigma\right)\chi_{Q'}(\zeta).
		\end{align}
		Finally, plugging \eqref{Tm5} and \eqref{Tm6} into \eqref{Tm1}, we have
		$$\mathscr{M}_{\mathcal{I}^i,m}(f,g)(\zeta)\lesssim m\sum_{\ell=1}^{2}\sum_{Q'\in S_{i,\ell}}\left(\frac{1}{\sigma(Q')}\int_{Q'}|f|d\sigma\right)\left(\frac{1}{\sigma(Q')}
		\int_{Q'}|g|d\sigma\right)\chi_{Q'}(\zeta).$$
		Therefore, one has
		\begin{align*}
			\int_{\SN}\mathscr{M}_{\mathcal{I}^i,m}(f,g)|f|d\sigma
			\lesssim m\sum_{\ell=1}^{2}\int_{\SN}[T_{2,0}^{S_{i,\ell}}(f)]^2|g|d\sigma
			\lesssim m\max_{1\leq i\leq \mathscr{K}}\sup_{S: \text{sparse},S\subset\mathscr{D}_i}\|T_{2,0}^S(f)\|_{\mathbb{X}^{(2)}}^2.
		\end{align*}
		Combining with \eqref{eq8}, we get
		$$\|T_{2,m,\mathcal{I}^i}^{S}f\|_{\mathbb{X}^{(2)}}\lesssim m^{1/2}\max_{1\leq i\leq \mathscr{K}}\sup_{S: \text{sparse},S\subset\mathscr{D}_i}\|T_{2,0}^S(f)\|_{\mathbb{X}^{(2)}}.$$
		Applying Fatou's lemma,  
		\begin{align*}
			\|T_{2,m,i}^S\|_{\mathbb{X}^{(2)}}\leq\liminf_{I^i\to \mathcal{J}^i}\|T_{2,m,\mathcal{I}^i}^{S}f\|_{\mathbb{X}^{(2)}}\lesssim m^{1/2}\max_{1\leq i\leq \mathscr{K}}\sup_{S: \text{sparse},S\subset\mathscr{D}_i}\|T_{2,0}^S(f)\|_{\mathbb{X}^{(2)}},
		\end{align*}
		where 
		$$T_{2,m,i}^Sf(\zeta)=\left(\sum_{(j,k)\in\mathcal{J}^i}\left(\frac{1}{\sigma(2^mQ_j^k)}\int_{2^mQ_j^k}|f|d\sigma\right)^2
		\chi_{Q_j^k}(\zeta)\right)^{1/2}.$$
		Finally, we obtain
		\begin{align*}
			\|T_{2,m}^{S}f\|_{\mathbb{X}^{(2)}}^2=&\left\|\sum_{i=1}^{\mathscr{K}}[T_{2,m,i}^{S}f]^2\right\|_{\mathbb{X}}
			\leq\sum_{i=1}^{\mathscr{K}}\|[T_{2,m,i}^{S}f]^2\|_{\mathbb{X}}\\
			=&\sum_{i=1}^{\mathscr{K}}\|T_{2,m,i}^{S}f\|_{\mathbb{X}^{(2)}}^2\leq C m\max_{1\leq i\leq \mathscr{K}}\sup_{S: \text{sparse},S\subset\mathscr{D}_i}\|T_{2,0}^S(f)\|_{\mathbb{X}^{(2)}}^2.
		\end{align*}
		This completes the proof.
	\end{proof}
	
	The following lemma can be found in \cite{CUMP} (see also \cite{BD}).
	
	\begin{lemma}\label{CUMP1}
		Let $1<p<\infty$ and $\omega\in A_p$. Let $S\subset \mathscr{D}$ be a sparse family. Then for any $f\in L_\omega^p$,
		$$\|T_{2,0}^Sf\|_{L^p_\omega}\leq C[\omega]_{A_p}^{\max\{1/2,1/(p-1)\}}\|f\|_{L^p_\omega},$$
		where the constant $C>0$ does not depend on $f$, $\omega$ and $S$.
	\end{lemma}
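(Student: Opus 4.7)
I would prove Lemma \ref{CUMP1} by combining duality with the sparse structure of $S$ and the sharp weighted maximal-function bound of Theorem \ref{Apw}. The argument splits according to whether the exponent $\max\{1/2,1/(p-1)\}$ equals $1/2$ (i.e.\ $p\geq 3$) or $1/(p-1)$ (i.e.\ $1<p<3$).

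\textbf{Case $p\geq 2$.} Since $(T_{2,0}^S f)^2=\sum_{Q\in S}\langle|f|\rangle_Q^2\chi_Q$, one has $\|T_{2,0}^S f\|_{L^p_\omega}^2=\|(T_{2,0}^S f)^2\|_{L^{p/2}_\omega}$. I would dualize against $g\geq 0$ with $\|g\|_{L^{(p/2)'}_\omega}\leq 1$, with respect to the pairing $\int FG\,\omega\,d\sigma$, reaching the bilinear sparse form $\sum_{Q\in S}\langle|f|\rangle_Q^2\int_Q g\omega\,d\sigma$. Using sparseness ($\sigma(Q)\leq 2\sigma(E(Q))$ with $\{E(Q)\}$ disjoint), the $A_\infty$ property $\omega(Q)\lesssim[\omega]_{A_\infty}\omega(E(Q))$, and the pointwise dominations $\langle|f|\rangle_Q\leq Mf(\zeta)$ and $\omega(Q)^{-1}\int_Q g\omega\,d\sigma\leq M^d_\omega g(\zeta)$ for $\zeta\in E(Q)$ (where $M^d_\omega$ is the dyadic $\omega$-weighted maximal operator), the bilinear form collapses to a single integral $[\omega]_{A_\infty}\int(Mf)^2\,M^d_\omega g\,\omega\,d\sigma$. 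H\"older's inequality, Buckley's sharp bound $\|Mf\|_{L^p_\omega}\lesssim[\omega]_{A_p}^{1/(p-1)}\|f\|_{L^p_\omega}$ from Theorem \ref{Apw}, and the $\omega$-independent boundedness of $M^d_\omega$ on $L^{(p/2)'}_\omega$ close the estimate.

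\textbf{Case $1<p<2$.} Here I would invoke sharp Rubio de Francia extrapolation (in the Dragi\v{c}evi\'c--Grafakos--Pereyra--Petermichl form) starting from the $L^2_\omega$ bound with constant $[\omega]_{A_2}$ obtained in Case 1. This propagates to the full range $1<p<2$ with the correct exponent $\max\{1/2,1/(p-1)\}=1/(p-1)$.

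\textbf{Main obstacle.} The delicate point is sharpness of the exponent, particularly attaining $1/2$ when $p\geq 3$: a naive bilinear computation delivers only $[\omega]_{A_p}^{1/2+1/(p-1)}$, strictly larger than $[\omega]_{A_p}^{1/2}$. The remedy is to replace Buckley's estimate by the Hyt\"onen--P\'erez mixed $A_p$--$A_\infty$ refinement and to invoke the factorization $[\omega]_{A_p}\approx[\omega]_{A_\infty}[\omega^{1-p'}]_{A_\infty}^{p-1}$, which redistributes the $[\omega]_{A_\infty}^{1/2}$ loss incurred from sparseness against a correspondingly smaller maximal-function constant, forcing the overall exponent to collapse to $\max\{1/2,1/(p-1)\}$. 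This careful bookkeeping is the heart of the proof in \cite{CUMP} and \cite{BD}.
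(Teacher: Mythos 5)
The paper does not reprove this lemma---it simply cites \cite{CUMP} (see also \cite{BD})---so the comparison is against the arguments in those references. Your outline (duality over $(p/2)'$, sparseness, pointwise domination by maximal functions, then extrapolation) is the right skeleton, and you have correctly identified where the difficulty sits. But as written your argument has a genuine gap exactly there, and the remedy you propose does not close it.

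Concretely: in your Case $p\geq 2$ the bilinear estimate $\sum_Q\langle|f|\rangle_Q^2\int_Qg\omega\,d\sigma\lesssim[\omega]_{A_\infty}\int(Mf)^2M_\omega^dg\,\omega\,d\sigma$ together with Buckley's $\|Mf\|_{L^p_\omega}\lesssim[\omega]_{A_p}^{1/(p-1)}\|f\|_{L^p_\omega}$ yields $[\omega]_{A_\infty}^{1/2}[\omega]_{A_p}^{1/(p-1)}\leq[\omega]_{A_p}^{1/2+1/(p-1)}$, which is strictly worse than $[\omega]_{A_p}^{\max\{1/2,1/(p-1)\}}$ for every $p$, as you yourself note. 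Your proposed fix relies on the ``factorization'' $[\omega]_{A_p}\approx[\omega]_{A_\infty}[\omega^{1-p'}]_{A_\infty}^{p-1}$, but this is not a theorem. What is true is only the one-sided inequality $[\omega]_{A_p}^2\gtrsim[\omega]_{A_\infty}[\omega^{1-p'}]_{A_\infty}^{p-1}$ (coming from $[\omega]_{A_p}\geq[\omega]_{A_\infty}$ and $[\omega]_{A_p}=[\omega^{1-p'}]_{A_{p'}}^{p-1}\geq[\omega^{1-p'}]_{A_\infty}^{p-1}$), and inserting the Hyt\"onen--P\'erez mixed bound $\|Mf\|_{L^p_\omega}\lesssim([\omega]_{A_p}[\omega^{1-p'}]_{A_\infty})^{1/p}\|f\|_{L^p_\omega}$ into your chain still produces a power $\geq 1/2+1/(p-1)$. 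Even granting your factorization, the bookkeeping does not collapse to $\max\{1/2,1/(p-1)\}$; one can arrange $[\omega^{1-p'}]_{A_\infty}\approx1$ with $[\omega]_{A_\infty}$ large and violate the desired inequality. Finally, your case split at $p=2$ is not natural for square functions: the critical exponent here is $p=3$ (where $1/2=1/(p-1)$), and the correct route (the one taken in \cite{CUMP} and, in the homogeneous-type setting, \cite{BD}) is to prove the $L^3_\omega$ bound with exponent $1/2$ directly by a sparse/dyadic Carleson-embedding argument that exploits the symmetry between $\omega$ and $\omega^{-1/2}$ at $p=3$, and then run sharp Rubio de Francia extrapolation \cite{DGPP} from $p_0=3$, $\alpha=1/2$, which gives exactly $\alpha\max\{1,(p_0-1)/(p-1)\}=\max\{1/2,1/(p-1)\}$ for all $1<p<\infty$. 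As it stands, your proposal names the hard step but substitutes an incorrect auxiliary identity for an actual proof of it.
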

	Now we are ready to show \eqref{Rd9} by virtue of Lemma \ref{Ler31} and Lemma \ref{CUMP1}. By the celebrated Rubio de Francia extrapolation theorem, we only need to show \eqref{Rd9} for $p=3$.
	\begin{proof}[Proof of \eqref{Rd9}]
		At first, it suffices to show
		\begin{align}\label{Rd10}
			\sum_{Q\in S(Q^0)}\omega_\epsilon((\g^*_{\lambda}(P[f]))^2;Q)\chi_Q(\zeta)
			\lesssim\sum_{j=1}^{\infty}\frac{1}{2^{j/4}}\left(T_{2,j}^{S(Q^0)}(f)(\zeta)\right)^2.
		\end{align}
		Indeed, assume $\omega\in A_3$. By \eqref{Rd10},
		$$\left\|\left[\sum_{Q\in S(Q^0)}\omega_\epsilon((\g^*_{\lambda}(P[f]))^2;Q)\chi_Q\right]^{1/2}\right\|_{L^3_\omega}
		\lesssim\left\|\left[\sum_{j=1}^{\infty}\frac{1}{2^{j/4}}
		\left(T_{2,j}^{S(Q^0)}(f)\right)^2\right]^{1/2}\right\|_{L^3_\omega}.$$
		Note that
		\begin{align}\label{Rd11}
			\left\|\left[\sum_{j=1}^{\infty}\frac{1}{2^{j/4}}
			\left(T_{2,j}^{S(Q^0)}(f)\right)^2\right]^{1/2}\right\|_{L^3_\omega}
			=&\left\|\sum_{j=1}^{\infty}\frac{1}{2^{j/4}}
			\left(T_{2,j}^{S(Q^0)}(f)\right)^2\right\|_{L^{3/2}_\omega}^{1/2}.
		\end{align}
		Recall $\mathbb{X}=L^{3/2}_\omega$. Then by Minkowski's inequality,
		\begin{align*}
			\mbox{RHS\ of\ \eqref{Rd11}}\leq&\left(\sum_{j=1}^{\infty}\frac{1}{2^{j/4}}\left\|
			\left(T_{2,j}^{S(Q^0)}(f)\right)^2\right\|_{L^{3/2}_\omega}\right)^{1/2}\\
			=&\left(\sum_{j=1}^{\infty}\frac{1}{2^{j/4}}\|T_{2,j}^{S(Q^0)}(f)\|_{\mathbb{X}^{(2)}}^2\right)^{1/2}.
		\end{align*}
		This implies by Lemma \ref{Ler31} that
		\begin{align*}
			\mbox{RHS\ of\ \eqref{Rd11}}\lesssim\max_{1\leq i\leq \mathscr{K}}\sup_{S: \text{sparse},S\in\mathscr{D}_i}\|T_{2,0}^S(f)\|_{\mathbb{X}^{(2)}}
			=\max_{1\leq i\leq \mathscr{K}}\sup_{S: \text{sparse},S\in\mathscr{D}_i}\|T_{2,0}^S(f)\|_{L^3_\omega}.
		\end{align*}
		Applying Lemma \ref{CUMP1}, we have
		$$\left\|\left[\sum_{Q\in S(Q^0)}\omega_\epsilon((\g^*_{\lambda}(P[f]))^2;Q)\chi_Q(\zeta)\right]^{1/2}\right\|_{L^3_\omega}
		\lesssim[\omega]_{A_3}^{1/2}\|f\|_{L^3_\omega}.$$
		Then the Rubio de Francia extrapolation theorem (see e.g. \cite{CUMP1,DGPP,Du}) for $p=3$ yields \eqref{Rd9}.
		
		It remains to show \eqref{Rd10}. However, in order to show \eqref{Rd10}, it suffices to show that for any fixed $Q\in S(Q^0)$ and $\epsilon\in(0,1)$, 
		\begin{align}\label{Rd4}
			IV:=[\chi_{Q}((\g^*_{\lambda}(P[f]))^2-c)]^*(\epsilon\sigma(Q))\lesssim\sum_{j=1}^{\infty}\frac{1}{2^{j/4}}
			\left(\frac{1}{\sigma(2^{j}Q)}\int_{2^{j}Q}|f|d\sigma\right)^2,
		\end{align}
		where $c=[\g^*_{E_{Q}^c,\lambda}(P[f])(\zeta_Q)]^2$. In fact, by \eqref{Rd4} and using the definition of $\omega_\epsilon((\g^*_{\lambda}(P[f]))^2;Q)$, we obtain 
		\begin{align*}
			\sum_{Q\in S(Q^0)}\omega_\epsilon((\g^*_{\lambda}(P[f]))^2;Q)\chi_Q(\zeta)
			\lesssim&\sum_{Q\in S(Q^0)}\sum_{j=1}^{\infty}\frac{1}{2^{j/4}}
			\left(\frac{1}{\sigma(2^{j}Q)}\int_{2^{j}Q}|f|d\sigma\right)^2\chi_Q(\zeta)\\
			=&\sum_{j=1}^{\infty}\frac{1}{2^{j/4}}\left(T_{2,j}^{S(Q^0)}(f)(\zeta)\right)^2,
		\end{align*}
		which yields \eqref{Rd10}.
		
		Now we prove \eqref{Rd4}, and this finishes the proof of \eqref{Rd9}. Note that
		$$\left|(\g^*_{\lambda}(P[f])(\zeta))^2-c\right|
		\leq(\g^*_{E_Q,\lambda}(P[f])(\zeta))^2+\left|(\g^*_{E_Q^c,\lambda}(P[f])(\zeta))^2-c\right|.$$
		By the Cauchy-Schwarz inequality, we get 
		$$(\g^*_{E_Q,\lambda}(P[f])(\zeta))^2\leq 2(\g^*_{E_Q,\lambda}(P[f\chi_{8Q}])(\zeta))^2+2(\g^*_{E_Q,\lambda}(P[f\chi_{\SN\setminus8Q}])(\zeta))^2.$$
		Therefore, one has
		\begin{align*}
			IV\leq IV_1^2+IV_2+IV_3,
		\end{align*}
		where $I_1$, $I_2$ and $I_3$ are given by
		\begin{eqnarray*}
			IV_1& = &[\sqrt{2}\chi_{Q}\g^*_{E_Q,\lambda}(P[f\chi_{8Q}])]^*(\epsilon\sigma(Q)/8),\\
			IV_2& = &[2\chi_{Q}(\g^*_{E_Q,\lambda}(P[f\chi_{\SN\setminus8Q}])^2]^*(\epsilon\sigma(Q)/4),\\
			IV_3& =
			&[\chi_{Q}((\g^*_{E_Q^c,\lambda}(P[f\chi_{\SN\setminus8Q}]))^2-c)]^*(\epsilon\sigma(Q)/2).
		\end{eqnarray*}
		
		By Theorem \ref{Weak7}, we have
		$$IV_1\lesssim\frac{1}{\epsilon\sigma(Q)}\|\g^*_{\lambda}(P[f\chi_{8Q}])\|_{L^{1,\infty}}
		\lesssim\frac{1}{\sigma(8Q)}\int_{8Q}|f|d\sigma.$$
		Using Lemma \ref{P2}, one has
		\begin{align*}
			IV_2\lesssim&\frac{1}{\epsilon\sigma(Q)}\|(\g^*_{E_Q,\lambda}(P[f\chi_{\SN\setminus8Q}]))^2\|_{L^{1,\infty}(Q)}\\
			\leq&\frac{1}{\epsilon\sigma(Q)}\|(\g^*_{E_Q,\lambda}(P[f\chi_{\SN\setminus8Q}]))^2\|_{L^1}\\
			\lesssim&\sum_{j=1}^{\infty}\frac{1}{2^{2nj}}\left(\frac{1}{\sigma(2^j Q)}\int_{2^j Q}|f|d\sigma\right)^2.
		\end{align*}
		By Lemma \ref{P3},
		\begin{align*}
			IV_3\leq&\sup_{\zeta\in Q}\left|[\g^*_{E_{Q}^c,\lambda}(P[f])(\zeta)]^2-[\g^*_{E_{Q}^c,\lambda}(P[f])(\zeta_Q)]^2\right|\\
			\lesssim&\sum_{j=1}^{\infty}\frac{1}{2^{j/4}}\left(\frac{1}{\sigma(2^j Q)}\int_{2^j Q}|f|d\sigma\right)^2.
		\end{align*}
		This completes the proof of \eqref{Rd4}.
	\end{proof}

	\bigskip
	
	\section{Proofs of Theorem \ref{Main1}, Corollary \ref{Mcor1}, Theorem \ref{Main2} and Theorem \ref{Main3}}\label{section6}
	
	In this section, we will present the proofs of our main results, i.e. Theorem \ref{Main1}, Corollary \ref{Mcor1}, Theorem \ref{Main2} and Theorem \ref{Main3}. Now we show Theorem \ref{Main1} and Corollary \ref{Mcor1}.
	
	\begin{proof}[Proof of Theorem \ref{Main1}]
		\textbf{(i)} The desired inequality follows directly from \eqref{Area1} and Lemma \ref{Rd1}. It remains to show the optimality of the exponent. We only need to  consider the case $n=1$ and $1<p\leq 3$. We just deal with $X=R$ as $\overline{R}$ can be done with in the same way. Let $0<\delta<1$. Consider $ \omega(\xi)=|1-\xi|^{(p-1)(1-\delta)}$ and $f(\xi)=|1-\xi|^{\delta-1}$ for any $ \xi\in \mathbb{S}_1$. Then $[\omega]_{A_p}\asymp \delta^{1-p}$ and $\|f\|_{L^p_\omega}^p\asymp \delta^{-1}$. Note that by similar arguments in \cite[Lemma 4.3]{AB} or \cite[Lemma 2.6]{BBG}, one can show
		$$ S_\alpha^R(P[f])(\zeta) \gtrsim  \left(\int_{0}^{1}|\re{\left(R P[f](r\zeta)\right)}|^2(1-r)dr\right)^{1/2}, \quad \forall  \zeta\in\mathbb{S}_1. $$
		When $|1-\zeta|\leq \frac{1}{10}$, one can compute
		$$ \left(\int_{0}^{1}|\re{\left(R P[f](r\zeta)\right)}|^2(1-r)dr\right)^{1/2}\gtrsim \delta^{-1} |1-\zeta|^{\delta-1}.  $$
		This implies
		$$ \|S_\alpha^R(P[f])\|_{L^p_\omega}^p\gtrsim \delta^{-p-1}. $$
		Letting $\delta\rightarrow 0$, we conclude that the optimal exponent is $1/(p-1)$.
		\\
		\textbf{(ii)} We mainly use duality, i.e. Lemma \ref{Grlem}. Suppose $\alpha>1$. Assume that $f\in L^p_\omega$ with $P[f](0)=0$. Notice that the dual space of $L^p_\omega$ can be identified with $L^{p'}_{\omega'}$ under the pair $\int_{\SN}fhd\sigma$ for $f\in L^p_\omega$ and $h\in L^{p'}_{\omega'}$. It suffices to show that
		\begin{align}\label{A2}
			\left|\int_{\SN}fhd\sigma\right|\lesssim[\omega]_{A_p}^{\frac{1}{p-1}\max\{1/2,p-1\}}
			\left(\sum_{X\in\{R,\overline{R}\}}
			\|S_\alpha^{X}(P[f])\|_{L^p_\omega}\right).
		\end{align}
		for any $h\in L^{p'}_{\omega'}$ with $\|h\|_{L^{p'}_{\omega'}}\leq1$.

		Let $h\in L^{p'}_{\omega'}$ with $\|h\|_{L^{p'}_{\omega'}}\leq1$. Using Lemma \ref{Grlem},
		\begin{align*}
			\left|\int_{\SN}P[f](r\zeta)P[h](r\zeta)d\sigma(\zeta)\right|\lesssim
			\int_{\SN}S_\beta^{\widetilde{\bigtriangledown}}(P[f])(\zeta)
			S_\beta^{\widetilde{\bigtriangledown}}(P[\overline{h}])(\zeta)d\sigma(\zeta),
		\end{align*}
		where $1<\beta<\alpha$. Using H\"older's inequality,
		\begin{align*}
			\left|\int_{\SN}P[f](r\zeta)P[h](r\zeta)d\sigma(\zeta)\right|\lesssim\|S_\beta^{\widetilde{\bigtriangledown}}(P[f])\|_{L^p_\omega}
			\|S_\beta^{\widetilde{\bigtriangledown}}(P[\overline{h}])\|_{L^{p'}_{\omega'}}.
		\end{align*}
		We hence have from Theorem \ref{Main1} (i) and Proposition \ref{dim2} that
		\begin{align}\label{A1}
			\begin{split}
				\left|\int_{\SN}P[f](r\zeta)P[h](r\zeta)d\sigma(\zeta)\right|\lesssim&[\omega']_{A_{p'}}^{\max\{1/2,1/(p'-1)\}}
			\left(\sum_{X\in\{R,\overline{R}\}}
			\|S_\alpha^{X}(P[f])\|_{L^p_\omega}\right)\\
				=&[\omega]_{A_p}^{\frac{1}{p-1}\max\{1/2,p-1\}}\left(\sum_{X\in\{R,\overline{R}\}}\|S_\alpha^{X}(P[f])\|_{L^p_\omega}\right),
			\end{split}
		\end{align}
		where we use $[\omega']_{A_{p'}}=[\omega]_{A_p}^{{1}/{(p-1)}}$. By \eqref{NonM} and H\"older's inequality, we have 
		$$N_1(P[f])N_1(P[h])\in L^1.$$
		Note that
		$$|P[f](r\zeta)P[h](r\zeta)|\leq N_1(P[f])(\zeta)N_1(P[h])(\zeta)$$
		for any $\zeta\in\SN$. By \cite[Corollary 4.16]{Z}, $P[f]_*=f$ and $P[h]_*=h$. By the dominated convergence theorem,
		\begin{align*}
			&\lim_{r\to1^-}\left|\int_{\SN}f(\zeta)h(\zeta)-P[f](r\zeta)P[h](r\zeta)d\sigma(\zeta)\right|=0.
		\end{align*}
		Combining with \eqref{A1}, we get \eqref{A2}. We hence have
		$$\|f\|_{L^p_\omega}\lesssim[\omega]_{A_p}^{\frac{1}{p-1}\max\{1/2,p-1\}}\left(\sum_{X\in\{R,\overline{R}\}}
		\|S_\alpha^{X}(P[f])\|_{L^p_\omega}\right).$$
		This completes the proof.
	\end{proof}
	
	We come to the proof of Corollary \ref{Mcor1}.
	\begin{proof}[Proof of Corollary \ref{Mcor1}]
		It follows from Theorem \ref{Main1}, Proposition \ref{dim2} and \eqref{gra}.
	\end{proof}
	
	Next, we show Theorem \ref{Main2}.

	\begin{proof}[Proof of Theorem \ref{Main2}]
		\textbf{(i)} Let $\alpha>1/2$. Let $f\in H^p_\omega$. By Theorem \ref{Hp}, there is a K-limit function $f_*$ such that $f=P[f_*]$ and $\|f_*\|_{L^p_\omega}\leq\|f\|_{H^p_\omega}$. Applying Theorem \ref{Main1} to $P[f_*]$, we have
		$$\|S_{\alpha}^R(P[f_*])\|_{L^p_\omega}\lesssim[\omega]_{A_p}^{\max\{1/2,1/(p-1)\}}\|f_*\|_{L^p_\omega},$$
		which implies
		$$\|S_{\alpha}^R(f)\|_{L^p_\omega}\lesssim[\omega]_{A_p}^{\max\{1/2,1/(p-1)\}}\|f\|_{H^p_\omega}.$$
		This gives the first assertion.\\
		\noindent \textbf{(ii)} Note that every constant function belongs to $H^p_\omega$, and then it suffices to prove $\|f\|_{H^p_\omega}\lesssim \|S_{\alpha}^{R}(f)\|_{L^p_\omega}$ under the assumption $f(0)=0$, otherwise we can replace $f$ with $f-f(0)$. 
		
		Since $L^p_\omega\subset L^1$, we have $S_{\alpha}^{R}(f)\in L^1$. By Theorem B and Theorem \ref{Hp2}, we obtain that $f\in H^1$, $f_*$ exists and $f=P[f_*]$. In addition, $f_*\in L^1$. It suffices to show $f_*\in L^p_\omega$. Indeed, if $f_*\in L^p_\omega$, since $r\zeta\in D_1(\zeta)$, one has by \eqref{NonM}
		\begin{align}\label{Hp3}
			\sup_{0<r<1}\int_{\SN}|f(r\zeta)|^p\omega(\zeta)d\sigma(\zeta)
			\leq\|N_1(P[f_*])\|_{L^p_\omega}^p\lesssim[\omega]_{A_p}^{p/(p-1)}\|f_*\|_{L^p_\omega}^p<\infty,
		\end{align}
		which implies $f\in H^p_\omega$. We also use the similar arguments as in the proof of Theorem \ref{Main1} (ii) to show $f_*\in L^p_\omega$. Let us sketch the proof. Let $1<\beta<\alpha$. Let $h$ be a continuous function on $\SN$. Using Lemma \ref{Grlem},
		\begin{align*}
			\left|\int_{\SN}f(r\zeta)P[h](r\zeta)d\sigma(\zeta)\right|\lesssim
			\int_{\SN}S_\beta^{\widetilde{\bigtriangledown}}(f)(\zeta)
			S_\beta^{\widetilde{\bigtriangledown}}(P[\overline{h}])(\zeta)d\sigma(\zeta).
		\end{align*}
		By Theorem \ref{Main1} and Proposition \ref{dim2},
		\begin{align}\label{A4}
			\left|\int_{\SN}f(r\zeta)P[h](r\zeta)d\sigma\right|
			\lesssim[\omega]_{A_p}^{\frac{1}{p-1}\max\{1/2,p-1\}}\|S_{\alpha}^{R}(f)\|_{L^p_\omega}
			\|h\|_{L^{p'}_{\omega'}}.
		\end{align}
		
		Let $H=h$ on $\SN$ and $H=P[h]$ in $\BN$. By \cite[Theorem 5.5]{Sto}, $\sup_{z\in\overline{\BN}}|H(z)|<\infty$. By \cite[Corollary 4.16]{Z}, $P[h]_*=h$. Since $f\in H^1$, by \cite[Theorem 4.24]{Z}, $N_1f\in L^1$. Note that $f(r\zeta)\leq N_1f(\zeta)$ for any $\zeta\in\SN$. We apply the dominated convergence theorem to deduce
		\begin{align*}
			&\lim_{r\to1^-}\left|\int_{\SN}f_*(\zeta)h(\zeta)-f(r\zeta)P[h](r\zeta)d\sigma(\zeta)\right|=0.
		\end{align*}
		Combining with \eqref{A4}, we have
		$$\left|\int_{\SN}f_*hd\sigma\right|\lesssim[\omega]_{A_p}^{\frac{1}{p-1}\max\{1/2,p-1\}}\|S_{\alpha}^{R}(f)\|_{L^p_\omega}
		\|h\|_{L^{p'}_{\omega'}}.$$
		Recall that the set of continuous functions is dense in $L^{p'}_{\omega'}$ (\cite[P. 69]{R2}). This implies $f_*\in L^p_\omega$ and $$\|f_*\|_{L^p_\omega}\lesssim[\omega]_{A_p}^{\frac{1}{p-1}\max\{1/2,p-1\}}\|S_{\alpha}^{R}(f)\|_{L^p_\omega}.$$
		By \eqref{Hp3}, we have
		$$\|f\|_{H^p_\omega}
		\lesssim[\omega]_{A_p}^{\frac{1}{(p-1)^2}\max\{1/2,p-1\}}\|S_{\alpha}^{R}(f)\|_{L^p_\omega}.$$
		This completes the proof.
	\end{proof}
	
	Finally we end this section with the proof of Theorem \ref{Main3} by virtues of Lemma \ref{Rd3} and Theorem \ref{Main2}.
	\begin{proof}[Proof of Theorem \ref{Main3}]
		It follows from Theorem \ref{Hp} that there exists $f_*\in L_\omega^p$ such that $f=P[f_*]$ and $\|f_*\|_{L^p_\omega}\lesssim \|f\|_{H^p_\omega}$. Let $d\mu(z)=(1-|z|^2)|Rg(z)|^2dv(z)$. Note that $\mu$ is a Carleson measure since $g\in BMOA$. Then by \eqref{Area2} and Lemma \ref{Rd3},
		\begin{align*}
			\|S_\alpha^R(J_gf)\|_{L^p_\omega}= \|S^{\mu}_{\alpha}(P[f_*])\|_{L^p_\omega}&\lesssim \|\g^*_{\mu,\lambda}(P[f_*])\|_{L^p_\omega}\\
			&\lesssim[\omega]_{A_p}^{\max\{1/2,1/(p-1)\}}\|f_*\|_{L^p_\omega}\\
			&\lesssim [\omega]_{A_p}^{\max\{1/2,1/(p-1)\}}\|f\|_{H^p_\omega}.
		\end{align*}
		By Theorem \ref{Main2}, $$\|J_gf\|_{H_\omega^p}\lesssim[\omega]_{A_p}^{\frac{1}{(p-1)^2}\max\{1/2,p-1\}}\|S_\alpha^R(J_gf)\|_{L^p_\omega}.$$ 
		We hence have
		$$\|J_gf\|_{H_\omega^p}\lesssim[\omega]_{A_p}^{\varphi(p)}\|f\|_{H_\omega^p},$$
		where
		$$\varphi(p)={\frac{1}{(p-1)^2}\max\{1/2,p-1\}}+\max\{1/2,1/(p-1)\}.$$
		This finishes the proof.
	\end{proof}
	\bigskip


\end{document}